\newcommand{\on}{\operatorname}
\newcommand{\cal}{\mathcal}
\newcommand{\f}{\mathfrak}
\newcommand{\mbf}{\mathbf}
\newcommand{\al}{\alpha}
\def\wt{{\rm wt}}
\def\de{\delta}
\def\be{\beta}
\newcommand{\la}{\lambda}
\def\C{{\mathbb C}}
\def\Q{{\mathbb Q}}
\def\Z{{\mathbb Z}}
\def\N{{\mathbb N}}
\def\1{{\bf 1}}
\def\l{\lambda}
\def \End{{\rm End}}
\def \Hom{{\rm Hom}}
\def \Irr{\on{Irr}}
\def \<{\langle}
\def \>{\rangle}
\def \w{\omega}
\def \wg{{\widehat{\frak{g}}}}
\def \sl{\frak{sl}}
\def \a{\alpha}
\def \b{\beta}
\def \e{\epsilon}
\def \h{\mathfrak{h}}
\def \l{\lambda}
\def \w{\omega}
\numberwithin{equation}{section}
\newtheorem{theorem}{Theorem}[section]
\newtheorem{lem}[theorem]{Lemma}
\newtheorem{cor}[theorem]{Corollary}
\newtheorem{remark}[theorem]{Remark}
\theoremstyle{definition}
\newtheorem{defn}[theorem]{Definition}
\begin{document}
\title[Vertex Operator Algebras]{Tensor Decomposition, Parafermions,  Level-Rank Duality, and Reciprocity Law for Vertex Operator Algebras}

\author{ Cuipo (Cuibo) Jiang} \thanks{Jiang is supported by China NSF grants 10931006, 11371245,  China RFDP grant 2010007310052, and the Innovation Program of Shanghai Municipal Education Commission (11ZZ18)}
\address[Jiang]{Department of Mathematics,  Shanghai Jiaotong University, Shanghai, 200240, China }
\email{cpjiang@sjtu.edu.cn}

\author{Zongzhu Lin}
\address[Lin]{ Department of Mathematics, Kansas State University, Manhattan, KS 66506, USA}
\email{zlin@math.ksu.edu}

\begin{abstract}  For the semisimple Lie algebra $ \frak{sl}_n$, the basic
 representation $L_{\widehat{\frak{sl}_{n}}}(1,0)$ of the affine Lie algebra
 $\widehat{\frak{sl}_{n}}$ is a lattice vertex operator algebra.
 The first main result of the paper is to prove that the commutant vertex operator
  algebra of $ L_{\widehat{\frak{sl}_{n}}}(l,0)$ in the $l$-fold tensor product
  $ L_{\widehat{\frak{sl}_{n}}}(1,0)^{\otimes l}$ is isomorphic to the parafermion
  vertex operator algebra $K(\frak{sl}_{l},n)$, which is the commutant of the
Heisenberg vertex operator algebra $L_{\widehat{\frak{h}}}(n,0) $ in $L_{\widehat{\frak{sl}_l}}(n,0)$.
The result provides a version of level-rank duality. The second main result of the
paper is to prove more general version of the first result that the commutant of  $ L_{\widehat{\frak{sl}_{n}}}(l_1+\cdots +l_s, 0)$ in
$L_{\widehat{\frak{sl}_{n}}}(l_1,0)\otimes \cdots \otimes L_{\widehat{\frak{sl}_{n}}}(l_s, 0)$ is isomorphic to the commutant of the vertex operator algebra generated by a
Levi Lie subalgebra of $\frak{sl}_{l_1+\cdots+l_s}$ corresponding to the
composition
$(l_1, \cdots, l_s)$ in the rational vertex operator algebra
$ L_{\widehat{\frak{sl}}_{l_1+\cdots +l_s}}(n,0)$. This general version
also resembles a version of reciprocity law discussed by Howe in the context
of reductive Lie groups. In the course of the proof of the main results, certain
Howe duality pairs also appear in the context of vertex operator algebras.

\end{abstract}
\subjclass[2010]{17B69}

\maketitle
\section{Introduction}

%\[\sideset{_n^a}{_m^b}\prod_{\substack{n=1\\m=2}}amndg\]
 \subsection{} Given a finite dimensional simple Lie algebra $\frak{g}$,
  let $\wg=\frak{g}\otimes [t,t^{-1}]\oplus  \C K$ be its affine Lie algebra.
  Let  $ L_{\wg}(1,0)$ be the basic representation of $\wg$. Then for any
  $l \in \N$, the tensor product $\wg$-module $L_\wg(1,0) ^{\otimes l} $ is a direct sum of irreducible $\wg$-modules and there is a decomposition
 \begin{equation} \label{eq:decomposition} L_{\wg}(1,0)^{\otimes l}=\bigoplus L_\wg(l, \bar{\Lambda})\otimes_{\C}M_\wg(l, \bar{\Lambda})
 \end{equation}
where $L_{\wg}(l, \bar{\Lambda})$ are level $l$ irreducible  $\wg$-modules and $M_\wg(l, \bar{\Lambda})=\Hom_{\wg}(L_\wg(l, \bar{\Lambda}), L_{\wg}(1,0)^{\otimes l})$ are vector spaces.
  Determining
 $M_\wg(l, \bar{\Lambda})$ is one of
the main problems in representation theory which is amount to decomposing
the tensor product. Unlike the question for finite dimensional Lie algebras
(or corresponding Lie groups, quantum groups, etc), the vector spaces
$M_\wg(l, \bar{\Lambda})$ are infinite dimensional.  The obvious highest weight vector
$v^+\otimes \cdots \otimes v^{+}$ in $L_{\wg}(1,0)^{\otimes l}$ generates
an irreducible $\wg$-submodule isomorphic to  $L_{\wg}(l,0)$.
Thus $ M_\wg(l,0)\neq 0$. Since $L_{\wg}(1,0)$ and $L_{\wg}(l,0)$ are
vertex operator algebras, $L_{\wg}(1,0)^{\otimes l}$ has a tensor product
vertex operator algebra structure with $L_{\wg}(l,0)$ being a vertex operator subalgebra
(with  a different conformal vector).  In general, for a vertex operator algebra  $V$ and a vertex operator subalgebra $U$ (with possibly different conformal vectors) of $V$, we denote by $ C_V(U)$ the commutant of $ U$ in $V$ (see \cite[3.11]{LL} and $C_V(U)$  is a vertex operator subalgebra of $V$ (with possibly different conformal vectors). Then
$M_\wg(l,0)=C_{L_{\wg}(1,0)^{\otimes l}}(L_{\wg}(l,0))$ is the commutant
subalgebra of $L_{\wg}(l,0)$ in $L_{\wg}(1,0)^{\otimes l}$  and  is  a simple
vertex operator subalgebra of $L_{\wg}(1,0)^{\otimes l} $ and
$M_\wg(l,\bar{\Lambda})$ are
$M_\wg(l,0)$-modules.  In this paper,
 we first prove
 \begin{theorem} \label{thm:1.1}
   
 Let  $L_{\widehat{\frak{h}}}(n,0)$ be the vertex operator subalgebra of $L_{\widehat{\frak{sl}_{l}}}(n,0)$ generated by  the Cartan subalgebra $\frak{h}$ of $\frak{sl}_l$. Then $C_{L_{\widehat{\frak{sl}_n}}(1,0)^{\otimes l}}(L_{\widehat{\frak{sl}_n}}(l,0)) \cong
 C_{L_{\widehat{\frak{sl}_l}}(n,0)}(L_{\widehat{\frak{h}}}(n,0))$ as vertex operator algebras.
 \end{theorem}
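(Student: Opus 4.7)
The plan is to realize both sides of the theorem inside a common lattice vertex operator algebra $V_L$ with $L = \Z^{nl}$, and to reduce the assertion to a single level-rank duality identity there. Let $\{e_{i,j}\}_{1\le i\le n,\ 1\le j\le l}$ be an orthonormal basis of $L$, and set $L_j = \bigoplus_{i=1}^{n} \Z e_{i,j}$. The standard bosonic decomposition $V_{L_j} \cong L_{\widehat{\frak{sl}_n}}(1,0) \otimes M_j$, where $M_j$ is the rank-one Heisenberg vertex subalgebra generated by the block trace $h_j := e_{1,j}+\cdots+e_{n,j}$, tensored over $j$, yields
\[
V_L \;\cong\; L_{\widehat{\frak{sl}_n}}(1,0)^{\otimes l} \otimes M,
\]
where $M = M_1 \otimes \cdots \otimes M_l$ is the rank-$l$ Heisenberg VOA generated by $h_1,\ldots,h_l$. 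In particular, $L_{\widehat{\frak{sl}_n}}(1,0)^{\otimes l} = C_{V_L}(M)$.

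The main step, which I expect to be the principal obstacle, is the level-rank duality
\[
C_{V_L}\!\bigl(L_{\widehat{\frak{sl}_n}}(l,0)\bigr) \;=\; L_{\widehat{\frak{gl}_l}}(n,0),
\]
where $L_{\widehat{\frak{sl}_n}}(l,0)$ is the diagonal embedding obtained by summing the $l$ block copies of $\widehat{\frak{sl}_n}$, and $L_{\widehat{\frak{gl}_l}}(n,0)$ is generated by $h_1,\ldots,h_l$ (playing the role of its Cartan) together with vertex operators $X_{jj'} = \sum_{i=1}^n e^{e_{i,j}-e_{i,j'}}$ for $j\neq j'$ serving as $\frak{sl}_l$-type root vectors. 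Mutual commutativity is a direct OPE computation in the lattice VOA; the substantive content is that nothing further lies in the commutant. I would establish this either via a character/branching argument, exploiting that $L_{\widehat{\frak{sl}_n}}(l,0) \otimes L_{\widehat{\frak{sl}_l}}(n,0)$ is a conformal subalgebra of $L_{\widehat{\frak{sl}_{nl}}}(1,0) \subset V_L$ (the central charges add to $nl-1$), with Kac-Peterson multiplicities controlling the basic-module decomposition; or by demonstrating that the pair $\bigl(L_{\widehat{\frak{sl}_n}}(l,0),\, L_{\widehat{\frak{gl}_l}}(n,0)\bigr)$ is a reductive dual pair in $V_L$ so that simplicity and the double-commutant property force equality.

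Granting the duality, the theorem follows by intersecting commutants:
\begin{align*}
 C_{L_{\widehat{\frak{sl}_n}}(1,0)^{\otimes l}}\!\bigl(L_{\widehat{\frak{sl}_n}}(l,0)\bigr)
 &= L_{\widehat{\frak{sl}_n}}(1,0)^{\otimes l} \cap C_{V_L}\!\bigl(L_{\widehat{\frak{sl}_n}}(l,0)\bigr) \\
 &= C_{V_L}(M) \cap L_{\widehat{\frak{gl}_l}}(n,0) \\
 &= C_{L_{\widehat{\frak{gl}_l}}(n,0)}(M).
\end{align*}
Since $\frak{gl}_l = \frak{sl}_l \oplus \frak{c}$ with $\frak{c}$ the one-dimensional center, one has $L_{\widehat{\frak{gl}_l}}(n,0) \cong L_{\widehat{\frak{sl}_l}}(n,0) \otimes M_{\rm c}$ and $M \cong L_{\widehat{\frak{h}}}(n,0) \otimes M_{\rm c}$, where $\frak{h}$ is the $\frak{sl}_l$-Cartan and $M_{\rm c}$ is the rank-one Heisenberg corresponding to $\frak{c}$. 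Cancelling this common central Heisenberg factor in the last commutant yields $C_{L_{\widehat{\frak{sl}_l}}(n,0)}\!\bigl(L_{\widehat{\frak{h}}}(n,0)\bigr)$, which is the right-hand side of the theorem.
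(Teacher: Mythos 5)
Your reduction scheme (intersecting commutants inside an ambient lattice algebra and splitting off the central Heisenberg at the end) is sound as formal bookkeeping, but the entire weight of the theorem rests on the asserted ``main step'' $C_{V_L}(L_{\widehat{\frak{sl}_n}}(l,0))=L_{\widehat{\frak{gl}_l}}(n,0)$, and this step is both unproven and, as stated, false. Take $v=e^{h_1}$ with $h_1=e_{1,1}+\cdots+e_{n,1}$ (or $v=e^{2h_1}$ if you pass to the even sublattice $D_{nl}$; note $\Z^{nl}$ is odd, so your $V_L$ is only a vertex superalgebra). Since $(e_{i,j}-e_{i+1,j},h_1)=0$ for all $i,j$, this $v$ commutes with every diagonal $\widehat{\frak{sl}_n}$ current, hence lies in $C_{V_L}(L_{\widehat{\frak{sl}_n}}(l,0))$; but its lattice charge has nonzero total coordinate sum, whereas the subalgebra generated by $h_1,\dots,h_l$ and the $X_{jj'}$ is supported on charges in the span of the $e_{i,j}-e_{i,j'}$, so $v\notin L_{\widehat{\frak{gl}_l}}(n,0)$. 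The true commutant is therefore a proper lattice-type extension of $L_{\widehat{\frak{gl}_l}}(n,0)$, and identifying it precisely is exactly the level-rank branching problem you set out to avoid. Your second suggested strategy (``show the pair is a reductive dual pair, then invoke the double-commutant property'') is circular --- being a dual pair is what must be proved --- and the first (characters for the conformal embedding $\frak{sl}_n\oplus\frak{sl}_l\subset\frak{sl}_{nl}$ at level one) is only named, not carried out. Moreover, even with a corrected description of the commutant, your chain of equalities needs an additional argument that the extra sectors all carry nonzero $h_j(0)$-charge and hence die in the intersection with $C_{V_L}(M)$; this again requires the structural information you have not established. (A smaller slip: $V_{L_j}\cong L_{\widehat{\frak{sl}_n}}(1,0)\otimes M_j$ is not an isomorphism --- $V_{\Z^n}$ is a direct sum of such tensor products over cosets of $A_{n-1}\perp\Z(e_1+\cdots+e_n)$ --- though the consequence you actually use, $C_{V_L}(M)=L_{\widehat{\frak{sl}_n}}(1,0)^{\otimes l}$, is correct.)

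For comparison, the paper never needs any $\widehat{\frak{gl}}$ dual-pair statement in an enlarged lattice. It stays inside $V_{A_{n-1}^{\times l}}\cong L_{\widehat{\frak{sl}_n}}(1,0)^{\otimes l}$, passes to $V_{N_n^l}=C_V(L_{\widehat{\frak{h}_n}}(l,0))$, exhibits explicit generators $\omega^i, W^{3,i}$ of a copy of the parafermion $K(\frak{sl}_l,n)$ inside the commutant of $L_{\widehat{\frak{sl}_n}}(l,0)$ (building on the $l=2$ case and the results of Lam--Shimakura and Dong--Lam--Wang--Yamada), transports this copy through an explicit lattice vertex operator algebra isomorphism $\tau\colon V_{N_n^l}\to V_{\widetilde N_l^n}$, and then pins down both commutants simultaneously by a lowest-weight (weight-zero subspace) argument. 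If you want to salvage your route, you would have to prove the corrected commutant statement in $V_{\Z^{nl}}$ (or $V_{D_{nl}}$) with the lattice extension included, which is a substantial theorem in itself.
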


Note that $L_{\widehat{\frak{sl}_{l}}}(n,0)$
is an $\widehat{\frak{sl}_{l}}$-module of level $ n$ while
$L_{\widehat{\frak{sl}_n}}(l,0)$ is an $ \widehat{\frak{sl}_n}$-module of level $l$.
This provides another level-rank duality.  The commutant $C_{L_{\widehat{\frak{sl}_l}}(n,0)}(L_{\widehat{\frak{h}}}(n,0))$ is the parafermion vertex operator algebra by physcists and denoted by $ K(\f{sl}_l, n)$ which have been extensively studied \cite{ZF}, \cite{CGT}, \cite{DLY2}, \cite{DLWY}, \cite{ALY}, \cite{DW1}-\cite{DW3}. However, $C_{L_{\widehat{\frak{sl}_l}}(n,0)}(K(\f{sl}_l, n))$ only contains $L_{\widehat{\frak{h}}}(n,0)$ and is  a conformal extension of $ L_{\widehat{\frak{h}}}(n,0)$ in $ L_{\widehat{\frak{sl}_l}}(n,0)$. The proof of Theorem~\ref{thm:1.1}  also shows that 
\begin{equation} C_{L_{\widehat{\frak{sl}_l}}(n,0)}(K(\f{sl}_l, n))\cong V_{\sqrt{n}A_{l-1}}
\end{equation}   which is a lattice vertex operator algebra corresponding to the  lattice  $ \sqrt{n}A_{l-1}$. This follows from  \eqref{decomp1}. Therefore $(K(\frak{sl}_l, n), V_{\sqrt{n}A_{l-1}})$ is a duality pair in the vertex operator algebra $L_{\widehat{\frak{sl}_{l}}}(n,0) $ in the following sense of \cite{howe:reciprocity}.

\subsection{} More generally, given a sequence of positive integers
$\underline{\ell}=(l_1, \cdots l_s)$, the tensor product vertex operator algebra
$L_{\wg}(\underline{\ell}, 0)= L_{\wg}(l_1, 0)\otimes L_{\wg}(l_2, 0)\otimes
\cdots \otimes L_{\wg}(l_s,0)$
contains a vertex operator subalgebra isomorphic to
$ L_\wg(|\underline{\ell}|,0)$ with $|\underline{\ell}|=l_1+\cdots+l_s$. One considers the $ \wg$-module decomposition:
\begin{equation}   L_{\wg}(\underline{\ell}, 0)=\bigoplus L_{\wg}(|\underline{\ell}|, \bar{\Lambda})
\otimes M_\wg(\underline{\ell}, \bar{\Lambda}).
\end{equation}
Then $ M_\wg(\underline{\ell}, 0)=C_{L_{\wg}(\underline{\ell}, 0)}(L_{\wg}(|\underline{\ell}|, 0))$
is a simple vertex operator algebra and all  $M_\wg(\underline{\ell}, \bar{\Lambda})$ are
$ M_\wg(\underline{\ell}, 0)$-modules. On the other hand, the sequence
$\underline{\ell}$ defines a Levi subalgebra $\frak{l}_{\underline{\ell}}$ of
$\frak{sl}_{|\underline{\ell}|}$. The vertex operator subalgebra of
$ L_{\widehat{\frak{sl}_{|\underline{\ell}|}}}(n, 0)$ generated by
$\frak{l}_{\underline{\ell}}$ is simple (but not rational) and is
denoted by $L_{\widehat{\frak{l}_{\underline{\ell}}}}(n,0)$.
Set $K(\frak{sl}_{|\underline{\ell}|}, \frak{l}_{\underline{\ell}}, n)=
C_{L_{\widehat{\frak{sl}_{|\underline{\ell}|}}}(n, 0)}(L_{\widehat{\frak{l}_{\underline{\ell}}}}(n, 0)).$
This commutant subalgebra was first mentioned in \cite{FZ}. Note that 
$ \frak{l}_{\underline{\ell}}=\frak{h}$ when $\underline{\ell}=(1, \cdots, 1)$ and $K(\frak{sl}_{|\underline{\ell}|}, \frak{l}_{\underline{\ell}}, n)=K(\frak{sl}_{|\underline{\ell}|},  n)$ is the parafermion.   With this setting, a more general rank-level duality holds.

\begin{theorem} \label{thm:1.2} If $ \frak{g}=\frak{sl}_n$, then $C_{L_{\widehat{\frak{sl}_n}}(\underline{\ell}, 0)}(L_{\widehat{\frak{sl}_n}}(|\underline{\ell}|, 0))\cong
C_{L_{\widehat{\frak{sl}_{|\underline{\ell}|}}(n,0)}}(L_{ \widehat{\frak{l}_{\underline{\ell}}}}(n,0))$ as vertex operator algebras.
\end{theorem}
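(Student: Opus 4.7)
The plan is to mirror the strategy of Theorem~\ref{thm:1.1} and reduce Theorem~\ref{thm:1.2} to an intrinsic commutant inside the ambient vertex operator algebra $V := L_{\widehat{\frak{sl}_n}}(1,0)^{\otimes |\underline{\ell}|}$. Embedding $L_{\widehat{\frak{sl}_n}}(\underline{\ell}, 0) = \bigotimes_{i=1}^s L_{\widehat{\frak{sl}_n}}(l_i, 0) \hookrightarrow V$ via block-diagonal embeddings (each $L_{\widehat{\frak{sl}_n}}(l_i, 0) \hookrightarrow L_{\widehat{\frak{sl}_n}}(1,0)^{\otimes l_i}$ diagonally within its block of $l_i$ factors) places $L_{\widehat{\frak{sl}_n}}(|\underline{\ell}|, 0) \subset L_{\widehat{\frak{sl}_n}}(\underline{\ell}, 0) \subset V$ as a chain from the full diagonal through the block diagonal to the whole tensor product. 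I would aim to identify both sides of Theorem~\ref{thm:1.2} with $C_{K(\frak{sl}_{|\underline{\ell}|}, n)}\bigl(\bigotimes_{i=1}^s K(\frak{sl}_{l_i}, n)\bigr)$.

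First, I would apply Theorem~\ref{thm:1.1} at two granularities: globally to give $C_V(L_{\widehat{\frak{sl}_n}}(|\underline{\ell}|, 0)) \cong K(\frak{sl}_{|\underline{\ell}|}, n)$, and block by block (tensoring the block-wise applications and using that commutants distribute over VOA tensor products) to give $C_V(L_{\widehat{\frak{sl}_n}}(\underline{\ell}, 0)) \cong \bigotimes_{i=1}^s K(\frak{sl}_{l_i}, n)$. Rewriting the LHS as the intersection
\[ \text{LHS} = L_{\widehat{\frak{sl}_n}}(\underline{\ell}, 0) \cap C_V(L_{\widehat{\frak{sl}_n}}(|\underline{\ell}|, 0)), \]
and using the double-commutant equality $L_{\widehat{\frak{sl}_n}}(\underline{\ell}, 0) = C_V\bigl(\bigotimes_i K(\frak{sl}_{l_i}, n)\bigr)$, which should follow from the complete reducibility of $V$ as a module over the commuting pair, yields $\text{LHS} \cong C_{K(\frak{sl}_{|\underline{\ell}|}, n)}\bigl(\bigotimes_i K(\frak{sl}_{l_i}, n)\bigr)$.

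Symmetrically, for the RHS I would use the Lie algebra decomposition $\frak{l}_{\underline{\ell}} = \frak{z} \oplus \bigoplus_i \frak{sl}_{l_i}$, with the center $\frak{z}$ contained in the Cartan $\frak{h}$ of $\frak{sl}_{|\underline{\ell}|}$, so that $L_{\widehat{\frak{l}_{\underline{\ell}}}}(n, 0)$ is generated by $L_{\widehat{\frak{h}}}(n,0)$ together with $\bigotimes_i L_{\widehat{\frak{sl}_{l_i}}}(n, 0)$ inside $L_{\widehat{\frak{sl}_{|\underline{\ell}|}}}(n, 0)$. Taking commutants splits as
\[ \text{RHS} = K(\frak{sl}_{|\underline{\ell}|}, n) \cap C_{L_{\widehat{\frak{sl}_{|\underline{\ell}|}}}(n,0)}\bigl(\bigotimes_i L_{\widehat{\frak{sl}_{l_i}}}(n, 0)\bigr), \]
and applying Theorem~\ref{thm:1.1} inside $L_{\widehat{\frak{sl}_{|\underline{\ell}|}}}(n, 0)$ to each Levi factor $\frak{sl}_{l_i}$ identifies the second intersection with $\bigotimes_i K(\frak{sl}_{l_i}, n)$ sitting inside $K(\frak{sl}_{|\underline{\ell}|}, n)$, producing $\text{RHS} \cong C_{K(\frak{sl}_{|\underline{\ell}|}, n)}\bigl(\bigotimes_i K(\frak{sl}_{l_i}, n)\bigr)$ as well.

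The main obstacle will be establishing a Howe-type compatibility: the sub-VOA $\bigotimes_i K(\frak{sl}_{l_i}, n) \subset K(\frak{sl}_{|\underline{\ell}|}, n)$ arising from the block-diagonal embedding on the $V$-side must be verified to agree, under the Theorem~\ref{thm:1.1} isomorphism, with the one arising from the Levi block structure on the $L_{\widehat{\frak{sl}_{|\underline{\ell}|}}}(n, 0)$-side. Equivalently, the isomorphism of Theorem~\ref{thm:1.1} should restrict correctly to sub-VOAs indexed by compositions of $|\underline{\ell}|$. This compatibility is a refined level-rank/reciprocity statement that fits naturally into the reductive Howe duality framework highlighted in the introduction, and once it is in place the proof reduces to the two parallel identifications above.
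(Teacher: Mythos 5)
Your reduction of the right-hand side is where the genuine gap lies. The step ``applying Theorem~\ref{thm:1.1} inside $L_{\widehat{\frak{sl}_{|\underline{\ell}|}}}(n,0)$ to each Levi factor identifies $C_{L_{\widehat{\frak{sl}_{|\underline{\ell}|}}}(n,0)}\bigl(\bigotimes_i L_{\widehat{\frak{sl}_{l_i}}}(n,0)\bigr)$ with $\bigotimes_i K(\frak{sl}_{l_i},n)$'' is false as stated: each $K(\frak{sl}_{l_i},n)$ lies inside $L_{\widehat{\frak{sl}_{l_i}}}(n,0)$ and certainly does not commute with it. Read charitably, what you need is the equality $C_{L_{\widehat{\frak{sl}_{|\underline{\ell}|}}}(n,0)}(L_{\widehat{\frak{l}_{\underline{\ell}}}}(n,0))=C_{K(\frak{sl}_{|\underline{\ell}|},n)}\bigl(\bigotimes_i K(\frak{sl}_{l_i},n)\bigr)$, and only one inclusion of this is formal: an element of $K(\frak{sl}_{|\underline{\ell}|},n)$ that commutes with the Cartan Heisenberg and with every block parafermion need not commute with the lattice parts $V_{\sqrt{n}A_{l_i-1}}$ of the block affine subalgebras, since the subalgebra generated by the Heisenberg and the parafermion of a block is strictly smaller than $L_{\widehat{\frak{sl}_{l_i}}}(n,0)$. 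In the paper this identity appears only in Section 5.4(3), where it is deduced from Theorem~\ref{lem4.2} together with the see-saw/reciprocity theorem of Section 5; using it as an input is therefore essentially circular unless you prove it independently. Two further points: on the left-hand side, the double-commutant claim $C_V\bigl(\bigotimes_i K(\frak{sl}_{l_i},n)\bigr)=L_{\widehat{\frak{sl}_n}}(\underline{\ell},0)$ does not follow from complete reducibility alone (the paper itself notes that $C_{L_{\widehat{\frak{sl}_l}}(n,0)}(K(\frak{sl}_l,n))$ is strictly larger than the Heisenberg); it holds here, but needs the positivity of the lowest conformal weights of the nontrivial multiplicity spaces (Lemma~\ref{de-pa1}, Lemma~\ref{irr-1}). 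Finally, the ``main obstacle'' you name --- that the isomorphism of Theorem~\ref{thm:1.1} carries the block commutants onto the Levi-side block parafermions --- is precisely the point that must be proved, and you offer no mechanism for it.

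For comparison, the paper's proof avoids both double-commutant characterizations. It decomposes $L_{\widehat{\frak{sl}_n}}(1,0)^{\otimes l}$ block by block via Theorem~\ref{lr-dual2}, so that $C_{L_{\widehat{\frak{sl}_n}}(1,0)^{\otimes l}}(L_{\widehat{\frak{sl}_n}}(l,0))=\bigoplus_{\lambda,\mu}M^{(l_1,n)}(\bar{\lambda})\otimes M^{(l_2,n)}(\bar{\mu})\otimes M(\lambda,\mu,0)$ with $M(0,0,0)$ equal to the left-hand side; it decomposes $L_{\widehat{\frak{sl}_l}}(n,0)$ along the Levi subalgebra, \eqref{levi-decomp1}, and restricts to the Cartan-commutant to get $K(\frak{sl}_l,n)=\bigoplus_{\lambda^1,\lambda^2}U^{(n,l_1)}(\overline{\lambda^1},-\overline{\lambda^1})\otimes U^{(n,l_2)}(\overline{\lambda^2},-\overline{\lambda^2})\otimes\widetilde{M}(\lambda^1,\lambda^2,0)$ with $\widetilde{M}(0,0,0)$ equal to the right-hand side. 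It then transports the first decomposition through the isomorphism $\tau$ of Theorem~\ref{lr-dual2}; because $\tau$ is the explicit lattice isomorphism sending the generators $\omega^{i},W^{3,i}$ to $\widetilde{\omega}^{i},\widetilde{W}^{3,i}$, it carries each $M^{(l_i,n)}(0)$ onto $K(\frak{sl}_{l_i},n)$ (your compatibility), and the two vacuum multiplicity spaces are matched by comparing both sums as $K(\frak{sl}_{l_1},n)\otimes K(\frak{sl}_{l_2},n)$-modules and using that the vacuum constituent is the unique one with lowest conformal weight zero (Lemma~\ref{irr-1}(2)), giving $\tau(M(0,0,0))=\widetilde{M}(0,0,0)$. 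To complete your plan you would either have to carry out this comparison of decompositions anyway, or first establish the duality-pair/see-saw statements of Section 5 on which your intermediate identities rest.
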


Similarly, the proof (cf. \eqref{levi-decomp1} ) of Theorem~\ref{thm:1.2} also shows that 
$C_{L_{\widehat{\frak{sl}_{|\underline{\ell}|}}}(n, 0)}(K(\frak{sl}_{|\underline{\ell}|}, \frak{l}_{\underline{\ell}}, n)) 
$  is isomorphic to the tensor product of the vertex operator algebra corresponding to a semi-simple Lie algebra and a lattice vertex operator algebra, and in particular is a rational vertex operator algebra. 
\subsection{} Decomposing tensor products of
representations has a long history in various representation
 theories such as   finite groups, Lie groups, algebraic groups, quantum groups  (and super-version) and many others to come.
 It is related to invariant theory, combinatorics, and many other fields.
 For example, the classical problem of decomposing the tensor
 power of the natural representation of classical Lie algebras
 (Lie groups, algebraic groups) has been in the heart of
 representation theory for over a century and it is still the driving motivation today for their quantum version and super-version. Although no single reference would do the justice for subject, we do refer to Weyl's book \cite{Weyl} and Howe's lecture \cite{H},  for much earlier work and its influence in mathematics, and to \cite{LZ} for most recent results.  In those cases, the component corresponding to one-dimensional trivial module is of the main
 interests and is the invariant subspace. However, for the affine
 Lie algebra $ \widehat{\f{g}}$, the trivial module does not appear in
 the non-critical level, and  the module $L_{\widehat{\f{g}}}(l,0)$ plays the role
 of  the identity object in the fusion tensor category and thus, the
 commutant $ C_{L_{\wg}(1,0)^{\otimes l}}(L_{\wg}(l,0))$ can be
 regarded as the invariant, which is a vertex operator algebra.
Describing the commutant algebras in the algebras of operators on tensor
powers  is a main goal in invariant theory (see \cite{LZ} for recent results).
They are all related to Hecke algebras, Brauer algebras etc. In the classical
cases, the commutant algebra is the invariant subalgebra of the
endomorphism algebra of the tensor vector space.

In terms of tensor
category $\mathcal{C}$ of representations, the commutant algebra is
\begin{equation} \label{centralizer}\on{End}_{\mathcal{C}}(V^{\otimes l})=
\Hom_{\mathcal{C}}(\mathbb{I}, (V^{\otimes l})^{\vee}\otimes V^{\otimes l})\end{equation}
with $\mathbb{I}$ being the identity object of the tensor category
$ \mathcal{C}$. However, in case of  vertex
operator algebras, the commutant is
\begin{equation} \label{commutant}C_{L_{\wg}(1, 0)^{\otimes l}}(L_{\wg}(l, 0))=\Hom_{L_{\wg}(l, 0)}(L_{\wg}(l, 0), L_{\wg}(1, 0)^{\otimes l}),
\end{equation}
where $ L_{\wg}(1, 0)^{\otimes l})$ is a vertex operator algebra
while the tensor product is taken in the tensor category of
representations of the affine Lie algebra $\wg$, rather than in
the fusion tensor category of the vertex operator algebra.
However $L_{\wg}(l, 0)$ is the identity object of the fusion
tensor category of the vertex operator algebra $L_{\wg}(l, 0)$ and all other coefficients
\begin{equation} \label{schur functor} M_{\wg}(l, \bar{\Lambda})=\Hom_{L_{\wg}(l,0)}(L_{\wg}(l,\bar{\Lambda}), L_{\wg}(1,0)^{\otimes l})
\end{equation}
are  $ C_{L_{\wg}(1, 0)^{\otimes l}}(L_{\wg}(l, 0))$-modules. From this aspect, the commutant vertex operator algebra  $C_{L_{\wg}(1, 0)^{\otimes l}}(L_{\wg}(l, 0))$ still plays the role of the endomorphism algebras as in the classical cases mentioned above. Thus describing the commutant algebras in tensor powers seems to be an invariant theory in the context of vertex operator algebras.

It is well known that commutant construction of vertex operator algebra is the so-called coset construction in conformal field theory. The main question of computing the commutant subalgebras ware first raised in \cite{FZ}.

\subsection{} The classical Schur-Weyl duality builds the relation
of the decomposition of the tensor power $V^{\otimes l}$ with
the representations of the endomorphism ring. This relation builds
connection between quite different representation theories, such as
representations of general linear groups (algebraic, quantum,
or their super version) with combinatorics (and geometry). Through this
relations many properties of the representation theory of one side can be
 described by the other side. See \cite{CLW} for recent application of this
 duality in characterization of the irreducible characters for certain Lie super
 algebras. In cases of representations of affine Lie algebras, Frenkel
 \cite{Fr} found a duality between different affine Lie algebras of type $A$ at
 different levels. This duality  now called  level-rank duality, establishes a  category equivalence between the category
$ \cal{C}(\widehat{\frak{sl}_n}, l)$ of representations of
$\widehat{\frak{sl}_n} $ in level $l$ and the category
$ \cal{C}(\widehat{\frak{sl}_l}, n)$.
This equivalence has been made as equivalence of
tensor categories recently in \cite{OS} and  its
appearances have been found in many fields of mathematics, such as
geometry and number theory, quantum groups,
in addition to conformal field theory. See \cite{ABS, NS,AS, Mu, MO}.
Both Theorems \ref{thm:1.1} and \ref{thm:1.2} give another form of the
level-rank duality. In particular, Theorem~
\ref{thm:1.2} can be regarded as a combination of level-rank duality and
Howe duality. Further relations with those dualities still need to be
investigated, in particular its interpretation in the geometric setting in terms of stable vector bundles over curves as in \cite{MO}.

\subsection{}  Given a semi-simple Lie algebra $\frak{g}$ and a levi subalgebra $ \frak{l}$, branching rule is to decompose irreducible $\frak{g}$-modules into direct sums of irreducible $ \frak{l}$-modules. Similarly,  one can consider the affine version of decomposing irreducible $ \wg$-modules
into direct sums of irreducible $ \hat{\frak{l}}$-modules, in the form of
\begin{equation}
L_{\wg}(l, \bar{\Lambda})=\oplus L_{\hat{\frak{l}}}(l, \bar{\Lambda}_{\frak{l}})\otimes N_{\frak{g, l}}(l, \bar{\Lambda}, \bar{\Lambda}_{\frak{l}}).
\end{equation}
In this case, $C_{L_{\wg}(l, 0)}(L_{\hat{\frak{l}}}(l, 0))
=N_{\frak{g, l}}(l, 0, 0)$ is a vertex operator algebra and all others
$N_{\frak{g, l}}(l, \bar{\Lambda}, \bar{\Lambda}_{\frak{l}})$ are the modules for this vertex operator algebra.
When $ \frak{l}=\frak{h}$ is the Cartan subalgebra of
$ \frak{g}$, $C_{L_{\wg}(l, 0)}(L_{\hat{\frak{l}}}(l, 0))$
is called the parafermion, which has been studied extensively  \cite{ZF}, \cite{CGT}, \cite{DLY2}, \cite{DLWY}, \cite{ALY}, \cite{DW1}-\cite{DW3}, etc.
For general Levi-subalgebra $\frak{l}$, we  will call
$C_{L_{\wg}(l, 0)}(L_{\hat{\frak{l}}}(l, 0))$ the relative parafermion.
Theorem~\ref{thm:1.2} says that decomposition of tensor products
of $ \widehat{\frak{sl}_{n}}$-irreducible integral representations
at various levels $ (l_1, \cdots, l_s)$ is related to the branching
role of irreducible representations of $\widehat{\frak{sl}_{l}}$
at the level $n$ with respect to the standard  Levi subalgebra defined by $(l_1, \cdots, l_s)$. In \cite{BEHHH}, it was suggested that certain unifying $\cal{W}$-algebras can be realized in terms of coset construction via embedding $\frak{sl}_{n}$ into $\frak{sl}_{n+1}$. Theorem~\ref{thm:1.2} is a more general form of this suggestion.

In fact, the left hand side corresponds to the conformal field theory  coset construction  $\f{su}(n)_{l_1}\oplus\cdots \oplus \f{su}(n)_{l_s}/\f{su}(n)_{l_1+\cdots +l_s}$ while the right hand side corresponds to the conformal field theory coset construction $ \f{su}(l_1+\cdots +l_s)_n/\f{l}(l_1, \cdots, l_s)_n$. Theorem~\ref{thm:1.2} seems to resemble a version of Howe reciprocity property of branching rules \cite{HTW}.  It seems the pairs satisfy the Gelfand pair (or Howe's symmetric pair) properties in the context of vertex operator algebras.

In \cite{H}, Howe studied a pair of reductive subgroups $(G_1, G_2)$ in a given reductive Lie group $G$ such that $G_1$ and $G_2$ are mutually centralizers and a certain representation $\pi$ of $G$ has a tensor decomposition into direct sum of the form
\[ \pi=\sum_{\tau} \tau\otimes \rho(\tau)\]
such that $ \rho: \Irr(G_1)\rightarrow \Irr(G_2)$ defines a correspondence. In  terms of vertex operator algebras, Theorem~\ref{thm:1.1} implies that $L_{\widehat{\f{sl}_n}}(l,0) $ and $K(\f{sl}_l, n)$ form a duality pair in $ V_{A_{n-1}^{\times l}}$  while $K(\f{sl}_l, n)$ and $V_{\sqrt{n} A_{l-1}}$ form a duality pair in $ L_{\widehat{\f{sl}_l}} (n,0)$. More details of correspondences of irreducible modules will be discussed in Section 5.
Theorem~\ref{thm:1.2} provides two other duality pairs. These pairs satisfy certain reciprocity law of duality pairs as discussed in \cite{howe:reciprocity}.

\subsection{} Various decompositions of  the tensor product $L_\wg(1,0)^{\otimes l}$
have been studied extensively. The commutant vertex operator algebra
$C=C_{L_\wg(1,0)^{\otimes l}}(L_\wg(l,0))$ is called coset construction. They
are expected to be rational. In case of $\frak{g}=\frak{sl}_2$, we  proved the rationality
for all $l$  in \cite{JL}. For smaller
 $l=2, 3$ there has been a lot of computational results.  In \cite{JL} all
irreducible modules for the commutant algebra have been classified.
Not all irreducible  $C$-modules appear as $M_{\wg}(l, \bar{\Lambda})$
in the decomposition \eqref{eq:decomposition}, but they appear in the
decompositions of $\otimes _{i=1}^{l}L_{\wg}(1,\bar{\Lambda}_{i})$. This is quite different from the classical Schur duality for associative algebras for which  all irreducible modules of the commutant algebra appear in the decomposition.  For $\frak{g}=\frak{sl}_{n}$ with
higher rank, rationality question is still illusive.  On the other hand, the parafermion
vertex operator algebras have been studied extensively recently and
have been expected to be rational as well \cite{DLY2}, \cite{DLWY}, \cite{ALY}, \cite{DW1}-\cite{DW3}.  Theorem~\ref{thm:1.1} explains that
cost-constructions and parafermions are the same. Establishment of
rationality of one would give the other. Theorem~\ref{thm:1.1} might provide
a way to toward answering the rationality question.

 The idea of establishing Theorems~\ref{thm:1.1} and \ref{thm:1.2} is to use the fact that
$L_{\wg}(1,0)$ is a lattice vertex operator algebra corresponding to the root
lattice $A_{n-1}$. Thus the tensor product in \eqref{eq:decomposition} is
 the lattice vertex operator algebra corresponding to the lattice
$ A_{n-1}^{\times l}$ which contains a sublattice $ N^l_n$ such that  the commutant of  the Heisenberg vertex operator algebra $L_{\widehat{\frak{h}}}(l, 0)$ in $L_{\wg}(1, 0)^{\otimes l}$ is isomorphic to the lattice vertex operator algebra $ V_{N^l_n}$ which  particularly contains $ C_{L_{\wg}(1,0)^{\otimes l}}(L_{\wg}(l,0))$. On the other hand, the commutant of $ C_{L_{\wg}(1,0)^{\otimes l}}(L_{\wg}(l,0))$ in $ V_{N^l_n}$ is the parafermion $ K(\frak{sl}_n, l)$ \eqref{de5}. A similarly defined lattice $ \tilde{N}_{l}^n \subseteq \widetilde{A}_{l-1}^{\times n}$ such that  $V_{\widetilde{N}_n^l}$ contains the parafermion $ K(\frak{sl}_l, n)$. Although the two lattices $ N_n^l$ and $\widetilde{N}_l^n$ are isomorphic, the isomorphism between $ V_{N_n^l}$ and $V_{\widetilde{N}_l^n}$ we needed is not the one coming from this lattice isomorphism. A key point is to find a correct isomorphism from $V_{N_{n}^l}$ to $V_{\widetilde{N}^n_l}$, so that $ C_{L_{\wg}(1,0)^{\otimes l}}(L_{\wg}(l,0))$ is mapped  to $K(\frak{sl}_l, n)$ in $V_{\widetilde{N}^n_l}$.

%which is naturally contained in the lattice
%$\Z^n\times_{\Z}\Z^l=\Z^{nl}$, although the latter is not even. The latter
%contains another even lattice $A_{l-1}^{\times n}$ which we did not use in
%the context, but they have intersection $ N^{l}_{n}$. From here one would
%expect the level-rank duality. However the duality in here is not about
%transposing the lattice $ \Z^n\otimes_{\Z} \Z^l $ rather, it is about transposing the sub lattice $N^{l}_{n}\cong \widetilde{N}^{n}_{l}$.
%In fact,  The key point is to find a right algebra isomorphism from $V_{N_{n}^l}$ to $V_{\widetilde{N}^n_l}$.

\subsection{} We now outline the paper. In Section 2, we briefly review basics on vertex operator algebras which we need in the latter sections. In Section 3, we establish Theorem~\ref{thm:1.1}.  Theorem~\ref{thm:1.2} is proved in Section 4.  In Section 5, we discuss various duality pairs and reciprocity laws in the spirit of \cite{H} and \cite{howe:reciprocity} that appear in the context of vertex operator algebras appeared in this paper. Such properties called {\em SeeSaw} property in \cite{Ku} in the context of reductive groups.  In Section 6 we summarize results  in answering the rationality  of parafermion vertex operator algebras and coset constructions.

{\em Acknowledgement:} This work is a continuation of the effort to understand the decomposition of the tensor product of basic representation in the spirit of Schur-Weyl duality. This work started when the second author was visiting SJTU during the summer of 2013. The second author thanks the support by SJTU and its hospitality. The main results of the paper were  achieved during the visit of the first author to Kansas State University in January 2014. This visit was supported by a collaborative project between the departments of Mathematics of SJTU and Kansas State University  funded by SJTU. The first author thanks C. Lam for informing her, after her talk on this paper at a conference in Dalian,  June 14-17, 2014, that he also has proved a result similar to Theorem 1.1.  

\section{Preliminaries}
\setcounter{equation}{0}

Let $V=(V,Y,{\bf 1},\omega)$ be a vertex operator algebra \cite{B},
\cite{FLM}, \cite{LL}. We review various notions of $V$-modules and the definition of rational vertex operator algebras and some basic facts (cf.
\cite{FLM}, \cite{Z}, \cite{DLM3}, \cite{DLM4}, \cite{LL}).  We also recall intertwining
operators,  fusion rules and some consequences following \cite{FHL}, \cite{ADL}, \cite{W}, \cite{DMZ}, \cite{A}, \cite{DJL}.

\begin{defn} A weak $V$-module is a vector space $M$ equipped
with a linear map
$$
\begin{array}{ll}
Y_M: & V \rightarrow {\rm End}(M)[[z,z^{-1}]]\\
 & v \mapsto Y_M(v,z)=\sum_{n \in \Z}v_n z^{-n-1},\ \ v_n \in {\rm End}(M)
\end{array}
$$
satisfying the following:

1) $v_nw=0$ for $n>>0$ where $v \in V$ and $w \in M$,

2) $Y_M( {\textbf 1},z)=\on{id}_M$,

3) The Jacobi identity holds:
\begin{eqnarray}
& &z_0^{-1}\de \left({z_1 - z_2 \over
z_0}\right)Y_M(u,z_1)Y_M(v,z_2)-
z_0^{-1} \de \left({z_2- z_1 \over -z_0}\right)Y_M(v,z_2)Y_M(u,z_1) \nonumber \\
& &\ \ \ \ \ \ \ \ \ \ =z_2^{-1} \de \left({z_1- z_0 \over
z_2}\right)Y_M(Y(u,z_0)v,z_2).
\end{eqnarray}
\end{defn}

%admissible

\begin{defn}
An admissible $V$ module is a weak $V$ module  which carries a
$\Z_+$-grading $M=\bigoplus_{n \in \Z_+} M(n)$, such that if $v \in
V_r$ then $v_m M(n) \subseteq M(n+r-m-1).$
\end{defn}

\begin{defn}
An ordinary $V$ module is a weak $V$ module which carries a
$\C$-grading $M=\bigoplus_{\l \in \C} M_{\l}$, such that:

1) $\dim(M_{\l})< \infty,$

2) $M_{\l+n}=0$ for fixed $\l$ and $n<<0,$

3) $L(0)w=\l w=\wt(w) w$ for $w \in M_{\l}$ where $L(0)$ is the
component operator of $Y_M(\omega,z)=\sum_{n\in\Z}L(n)z^{-n-2}.$
\end{defn}

\begin{remark} \ It is easy to see that an ordinary $V$-module is an admissible one. If $W$  is an
ordinary $V$-module, we simply call $W$ a $V$-module.
\end{remark}

We call a vertex operator algebra rational if the admissible module
category is semisimple. We have the following result from
\cite{DLM3} (also see \cite{Z}).

\begin{theorem}\label{tt2.1}
If $V$ is a  rational vertex operator algebra, then $V$ has finitely
many irreducible admissible modules up to isomorphism and every
irreducible admissible $V$-module is ordinary.
\end{theorem}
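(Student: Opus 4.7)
The approach is to introduce Zhu's associative algebra $A(V)=V/O(V)$ and exploit the bijection between isomorphism classes of irreducible admissible $V$-modules and of irreducible $A(V)$-modules. The key observation is that for any admissible $V$-module $M=\bigoplus_{n\in\Z_+}M(n)$, the zero mode $o(v)=v_{\wt(v)-1}$ of a homogeneous $v\in V$ preserves each $M(n)$, and the induced action on $M(0)$ descends to a well-defined $A(V)$-action. The first step is to verify that $M\mapsto M(0)$ sends irreducible admissible $V$-modules to irreducible $A(V)$-modules, and that conversely every irreducible $A(V)$-module $U$ lifts, via a generalized Verma module construction $\bar{M}(U)$ together with passage to its unique irreducible quotient $L(U)$, to an irreducible admissible $V$-module whose bottom weight space is $U$. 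This Zhu correspondence is essentially formal and does not yet use rationality.

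The second and central ingredient is to use rationality to show that $A(V)$ is a finite-dimensional semisimple associative algebra over $\C$. Semisimplicity is the easier direction: if some $A(V)$-module $U$ were not completely reducible, the induced admissible module $\bar{M}(U)$ (or a suitable quotient of it) would fail to decompose as a direct sum of irreducibles, contradicting rationality. Finite-dimensionality is the main obstacle and is where the real content of the theorem lies; the strategy I would follow is that of \cite{DLM3}, which analyzes admissible modules generated in a single degree together with a bimodule/filtration argument to rule out infinite-dimensional $A(V)$.

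Once $A(V)$ is known to be finite-dimensional and semisimple, the first half of the theorem is immediate: $A(V)$ has only finitely many irreducible modules, all finite-dimensional, so by the Zhu correspondence $V$ has only finitely many irreducible admissible modules $M$, each with $\dim M(0)<\infty$. To conclude that each such $M$ is ordinary, I would observe that $L(0)=o(\omega)$ commutes with every zero mode $o(v)$ (the identity $[L(0),v_n]=(\wt(v)-n-1)v_n$ vanishes when $n=\wt(v)-1$), hence acts as a scalar $h$ on the irreducible finite-dimensional $A(V)$-module $M(0)$. Using the same commutator identity, one then shows that $L(0)$ acts as $n+h$ on $M(n)$, giving a $\C$-grading $M=\bigoplus_{n\in\Z_+}M_{h+n}$. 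Finite-dimensionality of each weight space $M_{h+n}=M(n)$ then follows by induction on $n$, using that $M$ is generated by its finite-dimensional bottom $M(0)$ under the action of $V$ and that the graded components of $V$ are themselves finite-dimensional. The hard part throughout is the finite-dimensionality of $A(V)$; everything else is either formal (Zhu's bijection) or a standard consequence once that point is secured.
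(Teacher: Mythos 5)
You should first know what you are being compared against: the paper does not prove Theorem~\ref{tt2.1} at all; it is quoted as a known result from \cite{DLM3} (see also \cite{Z}). So there is no internal proof to match, and your route---Zhu's algebra $A(V)$, the bijection between irreducible admissible $V$-modules and irreducible $A(V)$-modules, and rationality forcing $A(V)$ to be finite-dimensional semisimple---is exactly the route of the cited sources. Within that route, though, your assessment of where the difficulty sits is off. Once you know (by your own induced-module argument, using $\bar{M}(U)(0)\cong U$ and the fact that the bottom level of an irreducible admissible module is an irreducible $A(V)$-module or zero) that \emph{every} $A(V)$-module is completely reducible, finite-dimensionality is not a separate hard step: Wedderburn--Artin makes $A(V)$ a finite direct sum of matrix algebras over division algebras, and since $A(V)=V/O(V)$ has countable dimension over $\C$, each such division algebra is $\C$. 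So the step you defer to a ``bimodule/filtration argument'' in \cite{DLM3} comes essentially for free, and the finiteness of the set of irreducible admissibles follows.

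The genuine gap is in your last step, the ordinariness of an irreducible admissible $M=\bigoplus_{n\geq 0}M(n)$. The $L(0)$-grading argument is fine, but the claimed induction giving $\dim M(n)<\infty$ from $\dim M(0)<\infty$ does not work. Since $M$ is irreducible, $M=\mathrm{span}\{v_m w \,|\, v\in V, m\in\Z\}$ for a fixed $0\neq w\in M(0)$ (the single-mode span of a vector is already a submodule, cf.\ \cite{LL}), so $M(n)$ is spanned by the vectors $v_{\wt(v)-1-n}w$ with $v$ homogeneous; for a \emph{fixed} $n$, every homogeneous component $V_m$ of $V$ contributes such modes, so $M(n)$ is an infinite sum of finite-dimensional pieces and the finite-dimensionality of each $V_m$ gives no bound --- the space of operators of a fixed degree is infinite-dimensional, so the induction on $n$ never closes. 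Indeed, if your argument were valid it would show that for an \emph{arbitrary} vertex operator algebra every irreducible admissible module with finite-dimensional bottom level is ordinary, without using rationality; but this is precisely the part of \cite{DLM3} where rationality must be invoked again (for instance through the higher Zhu algebras $A_n(V)$, which become finite-dimensional semisimple when $V$ is rational and control $M(n)$ the way $A(V)$ controls $M(0)$). You need to replace the final induction by an argument of that kind; ordinariness is not a formal consequence of $\dim A(V)<\infty$.
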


Suppose that $V$ is a rational vertex operator algebra and let
$M^1,...,M^k$ be the irreducible  modules such that
$$M^i=\oplus_{n\geq 0}M^i_{\l_i+n}$$
where $\l_i\in\Q$ \cite{DLM4},  $M^i_{\l_i}\ne 0$ and each
$M^i_{\l_i+n}$ is finite dimensional.

A vertex operator algebra is called $C_2$-cofinite if $C_2(V)$ has
finite codimension where $C_2(V)=\<u_{-2}v|u,v\in V\>$ \cite{Z}.

\begin{remark} If $V$ is a vertex operator algebra satisfying $C_{2}$-cofinite
property, $V$ has only finitely many irreducible admissible modules
up to isomorphism \cite{DLM3}, \cite{L}, \cite{Z}.
\end{remark}

We now recall lattice vertex operator algebras and related results. Let $L$ be a positive definite even lattice in the sense that $L$ has a
symmetric positive definite  $\mathbb{Z}$-valued bilinear form $(\cdot, \cdot)$ such that $(\alpha,
\alpha)\in 2\mathbb{Z}$ for any $\alpha \in L$.
We set $\h=\C\otimes_{\Z} L$ and extend $(\cdot\,,\cdot)$ to a
$\C$-bilinear form on $\h$. Let
$\hat{\h}=\C[t,t^{-1}]\otimes\h\oplus\C C$ be the affinization of
commutative Lie algebra $\h$ defined by
\begin{align*}
[\beta_1\otimes t^{m},\,\beta_2\otimes
t^{n}]=m(\beta_1,\beta_2)\delta_{m,-n}C\hbox{ and }[C,\hat{\h}]=0
\end{align*}
for any $\beta_i\in\h,\,m,\,n\in\Z$. Then $\hat{\h}^{\geq
0}=\C[t]\otimes\h\oplus\C C$ is a commutative subalgebra. For any
$\lambda\in\h$, we can define a one dimensional $\hat{\h}^{\geq
0}$-module $\C e^\lambda$ by the actions $\rho(h\otimes
t^{m})e^\lambda=(\lambda,h)\delta_{m,0}e^\lambda$ and
$\rho(C)e^\lambda=e^\lambda$ for $h\in\h$ and $m\geq0$. Now we
denote by
\begin{align*}
M(1,{\lambda})=U(\hat{\h})\otimes_{U(\hat{\h}^{\geq 0})}\C
e^\lambda\cong S(t^{-1}\C[t^{-1}]\otimes \h),
\end{align*}
the $\hat{\h}$-module induced from $\hat{\h}^{\geq 0}$-module $\C
e^\lambda$. Set $M(1)=M(1,0).$ Then there exists a linear map
$Y:M(1)\to(\End M(1,\lambda)[[z,z^{-1}]]$ such that
$(M(1),\,Y,\,\1,\,\w)$ has a simple vertex operator algebra
structure and $(M(1,\lambda),Y)$ becomes an irreducible
$M(1)$-module for any $\lambda\in\h$ (see \cite{FLM}). The vacuum
vector and the Virasoro element are given by $\1=e^0$ and
$\w=\frac{1}{2}\sum_{a=1}^{d}h_a(-1)^2\1$ respectively, where
$\{h_a|a=1,2,\cdots,d\}$ is an orthonormal basis of $\h$.

Let $\widehat{L}$ be the canonical central extension of $L$ by
$\langle \kappa\rangle= \langle\kappa|\kappa^2= 1\rangle$ (see
\cite{FLM}):
$$1\rightarrow \langle\kappa
\rangle
\rightarrow\widehat{L}\stackrel {-}{\rightarrow} L\rightarrow 1$$
with the commutator map $c(\al,\be)=\kappa^{(\al,\be)}$ for
$\al,\be\in L$. Let $e: L\rightarrow\widehat{L}$  ($\al \mapsto e_{\al}$) be a section such
that $e(0)=1$ and $\epsilon: L\times L\rightarrow\langle \kappa\rangle $  the
corresponding 2-cocycle. We may assume that $\epsilon$ is
bimultiplicative. Then
$\epsilon(\al,\be)\epsilon(\be,\al)=\kappa^{(\al,\be)}$,
$$
\epsilon(\al,\be)\epsilon(\al+\be,\gamma)=\epsilon(\be,\gamma)(\al,\be+\gamma),$$
and $e_{\al}e_{\be}=\epsilon(\al,\be)e_{\al+\be}$ for
$\al,\be,\gamma\in L$. Let $\theta$ denote the automorphism of
$\widehat{L}$ defined by $\theta(e_{\al})=e_{-\al}$ and
$\theta(\kappa)=\kappa$. Set
$K=\{a^{-1}\theta(a)|a\in\widehat{L}\}$. Note that if $(\alpha,\beta)\in 2\Z$ for
all $\alpha, \beta\in L$ then  $\widehat{L}=L\times \langle\kappa
\rangle$ is a direct product of abelian groups. In this case we may choose
 $\epsilon(\al,\be)=1$ for all $\alpha,\beta\in L.$

The lattice vertex
operator algebra associated to $L$ is given by
$$V_L=M(1)\otimes \C^{\epsilon}[L],$$
 where
$\C^{\epsilon}[L]$ is the twisted group algebra of $L$ with a basis $e^{\alpha}$ for
$\alpha\in L$ and is an $\widehat L$-module such that $e_\alpha
e^\beta= \epsilon(\al,\be)e^{\alpha+\beta}.$ Note that if $(\al,\be)\in 2\Z$ for
all $\alpha,\beta\in L$ then $\C^{\epsilon}[L]=\C[L]$ is the usual group algebra.

Recall that
$L^{\circ}=\{\,\lambda\in\h\,|\,(\alpha,\lambda)\in\Z\,\}$
is the dual lattice of $L$.  There is an $\widehat{L}$-module
structure on $\C[L^{\circ}]=\bigoplus_{\lambda\in
L^{\circ}}\C e^\lambda$ such that $\kappa$ acts as $-1$
(see \cite{DL}). Let $L^{\circ}=\cup_{i\in
L^{\circ}/L}(L+\lambda_i)$ be the coset decomposition such that
$(\lambda_i,\lambda_i)$ is minimal among all $(\lambda,\lambda)$ for
$\lambda\in L+\lambda_i.$ In particular, $\lambda_0=0.$
Set $\C[L+\lambda_i]=\bigoplus_{\alpha\in L}\C
e^{\alpha+\lambda_i}.$ Then $\C[L^{\circ}]=\bigoplus_{i\in
L^{\circ}/L}\C[L+\lambda_i]$ and each $\C[L+\lambda_i]$ is an
$\widehat L$-submodule of $\C[L^{\circ}].$ The action of
$\widehat L$ on $\C[L+\lambda_i]$ is as follows:
$$e_{\alpha}e^{\beta+\lambda_i}=\e(\a,\b)e^{\a+\b+\l_i}$$
for $\alpha,\,\b\in L.$ Although it looks that the $\hat{L}$-module structure on each
$\C[L+\lambda_i]$ depends on the choice of $\lambda_i$ in
$L+\lambda_i,$ it is easy to check that different choices of
$\lambda_i$ give isomorphic $\widehat L$-modules.

\vskip 0.3cm
We now recall the  notion of intertwining operators and fusion
rules from [FHL].
\begin{defn}
Let $V$ be a vertex operator algebra and $M^1$, $M^2$, $M^3$ be weak $V$-modules. An intertwining
operator $\mathcal {Y}( \cdot , z)$ of type $\left(\begin{tabular}{c}
$M^3$\\
$M^1$ $M^2$\\
\end{tabular}\right)$ is a linear map$$\mathcal
{Y}(\cdot, z): M^1\rightarrow \Hom(M^2, M^3)\{z\}$$ $$v^1\mapsto
\mathcal {Y}(v^1, z) = \sum_{n\in \mathbb{C}}{v_n^1z^{-n-1}}$$
satisfying the following conditions:

(1) For any $v^1\in M^1, v^2\in M^2$ and $\lambda \in \mathbb{C},
v_{n+\lambda}^1v^2 = 0$ for $n\in \mathbb{Z}$ sufficiently large.

(2) For any $a \in V, v^1\in M^1$,
$$z_0^{-1}\delta(\frac{z_1-z_2}{z_0})Y_{M^3}(a, z_1)\mathcal
{Y}(v^1, z_2)-z_0^{-1}\delta(\frac{z_1-z_2}{-z_0})\mathcal{Y}(v^1,
z_2)Y_{M^2}(a, z_1)$$
$$=z_2^{-1}\delta(\frac{z_1-z_0}{z_2})\mathcal{Y}(Y_{M^1}(a, z_0)v^1, z_2).$$

(3) For $v^1\in M^1$, $\dfrac{d}{dz}\mathcal{Y}(v^1,
z)=\mathcal{Y}(L(-1)v^1, z)$.
\end{defn}
All of the intertwining operators of type $\left(\begin{tabular}{c}
$M^3$\\
$M^1$ $M^2$\\
\end{tabular}\right)$ form a vector space denoted by $I_V\left(\begin{tabular}{c}
$M^3$\\
$M^1$ $M^2$\\
\end{tabular}\right)$. The dimension of $I_V\left(\begin{tabular}{c}
$M^3$\\
$M^1$ $M^2$\\
\end{tabular}\right)$ is called the
fusion rule of type $\left(\begin{tabular}{c}
$M^3$\\
$M^1$ $M^2$\\
\end{tabular}\right)$ for $V$.

We have the following results from \cite{D} and \cite{DL}.
\begin{theorem}\label{Dong-L}
 Let $L$ be a positive-definite even lattice. Then

(1) $V_{L}$ is rational and $V_{L+\lambda_i}$, $i\in L^{\circ}/L$ are all the irreducible modules of $V_{L}$.

(2) For $\la,\mu,\gamma\in L^{\circ}$, $$I_{V_L}\left(\begin{tabular}{c}
$V_{L+\gamma}$\\
$V_{L+\la}$ $V_{L+\mu}$\\
\end{tabular}\right)\neq 0$$
if and only if $\gamma-\la-\mu\in L$.

\end{theorem}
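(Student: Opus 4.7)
The plan is to leverage the explicit realization $V_L=M(1)\otimes \C^{\epsilon}[L]$ together with the classification of irreducible $M(1)$-modules as the $M(1,\mu)$ for $\mu\in\h$.

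For part (1), I would first promote each coset module $\C[L+\la_i]$ to a $V_L$-module by tensoring with the inductions $M(1,\la)$ for $\la\in L+\la_i$, obtaining $V_{L+\la_i}=\bigoplus_{\a\in L} M(1,\a+\la_i)$ with vertex operators of the classical form
\[
Y_M(e^{\a},z) = E^{-}(-\a,z)E^{+}(-\a,z)\, e_{\a}\, z^{\a};
\]
checking the Jacobi identity reduces to a standard commutator calculation exploiting bimultiplicativity of $\epsilon$. Irreducibility of $V_{L+\la_i}$ follows because each summand $M(1,\a+\la_i)$ is already irreducible as an $M(1)$-module, and the operators $e_{\a}z^{\a}$ cyclically connect these components across $\a\in L$. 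For completeness, given an irreducible $V_L$-module $W$, I would restrict to $M(1)\subset V_L$, choose an $\hat\h$-weight vector of weight $\mu$, and use locality of $Y(e^{\a},z)$ to force $(\a,\mu)\in\Z$ for all $\a\in L$, i.e.\ $\mu\in L^\circ$; the submodule generated is then identified with $V_{L+\mu}$. Rationality reduces to showing any admissible $V_L$-module decomposes into such simples, which follows from the complete reducibility of the $\hat\h$-action on finite-dimensional $L(0)$-weight spaces.

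For part (2), the necessity direction is immediate from weight considerations: the Jacobi identity applied with $a=h(-1)\1$ forces $[h(0),\mathcal{Y}(v,z)]=\mathcal{Y}(h(0)v,z)$ for all $h\in\h$, so the $\h(0)$-character of the image of $\mathcal{Y}$ lies in $\la+\mu+L$ but must also lie in $\gamma+L$, giving $\gamma-\la-\mu\in L$. For sufficiency, when $\gamma\equiv \la+\mu\pmod{L}$ I would construct a nonzero intertwining operator by the generalized vertex operator formula
\[
\mathcal{Y}(e^{\la+\a},z) = E^{-}(-(\la+\a),z)E^{+}(-(\la+\a),z)\, e_{\la+\a}\, z^{\la+\a},
\]
with a suitable cocycle extension of $\epsilon$ to $L^\circ\times L^\circ$ controlling the signs and a fractional power $z^{(\la,\mu)}$ encoding the non-integrality; the intertwiner Jacobi identity is verified by the same normal-ordering computation as in part (1).

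The main technical obstacle will be handling the fractional powers of $z$ and producing a consistent bimultiplicative cocycle on $L^\circ\times L^\circ$ restricting to $\epsilon$ on $L\times L$, so that the Jacobi identity holds on the nose rather than up to sign ambiguities and spurious factors in the associator. This is precisely what Dong--Lepowsky's generalized vertex operator framework is designed to handle, and invoking their normal-ordering and skew-symmetry identities absorbs the bulk of the technical work.
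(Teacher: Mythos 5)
This theorem is quoted in the paper as a known result from \cite{D} and \cite{DL} with no proof supplied, and your sketch reconstructs exactly those standard arguments: the Heisenberg decomposition and coset modules $V_{L+\lambda_i}$ for part (1), and generalized vertex operators with fractional powers $z^{(\la,\cdot)}$ and a cocycle extension of $\epsilon$ to $L^{\circ}$ for part (2), with the technical core delegated (as in the original sources) to the Dong--Lepowsky relative vertex operator machinery. The outline is correct, with only the minor caveat that complete reducibility of admissible modules should be justified by the structure theorem for restricted level-one $\hat{\h}$-modules rather than by finite-dimensionality of $L(0)$-weight spaces, which admissible modules need not have a priori.
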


\section{Level-Rank Duality}
\setcounter{equation}{0}

For $k\in\Z_{+}$ and  a complex finite-dimensional simple Lie algebra $\frak{g}$ with normalized non-degenerate bilinear form, let $\widehat{\frak g}$ be the corresponding  affine Lie algebra and $L_{\widehat{\frak g}}(k,0)$  the simple vertex operator algebra associated with the integrable highest weight module of $\widehat{\frak g}$ with level $k$. Let $\frak{h}$ be the  Cartan subalgebra of $\frak{g}$ and $L_{\widehat{\frak{h}}}(k,0)$  the Heisenberg  vertex operator subalgebra of $L_{\widehat{\frak g}}(k,0)$ generated by $\frak{h}$. Let
$$
K(\frak{g},k)=\{v\in L_{\widehat{\frak g}}(k,0)| [Y(u,z_1),Y(v,z_2)]=0, u\in L_{\widehat{\frak h}}(k,0)\}.
$$
Then  $K(L_{\widehat{\frak g}}(k,0))$ is the so-called parafermion vertex operator algebra (see \cite{BEHHH}, \cite{DLY2}, \cite{DW2}).

Let $s\in\Z_{\geq 2}$ and $\underline{\ell}=(l_1,\cdots,l_s)$ such that $l_{1},\cdots,l_{s}\in \Z_{+}$. Let $L_{\widehat{\frak g}}(l_{i},0)$ be the simple vertex operator algebra associated with the integrable highest weight module of $\widehat{\frak g}$ with level $l_{i}$, $i=1,2,\cdots,s$. Then we have the tensor product vertex operator algebra:
$$V=L_{\widehat{\frak g}}(l_{1},0)\otimes L_{\widehat{\frak g}}(l_{2},0)\otimes \cdots\otimes L_{\widehat{\frak g}}(l_{s},0).$$
Denote
$$
l=|\underline{\ell}|=\sum\limits_{i=1}^{s}l_{i}.
$$
${\frak g}$ can be naturally imbedded into the weight one subspace of $V$ diagonally as follows:
$$
{\frak{g}}\hookrightarrow V_{1}\subseteq V
$$
$$
a\mapsto a(-1){\bf 1}\otimes {\bf 1}\otimes\cdots \otimes {\bf 1}+{\bf 1}\otimes a(-1){\bf 1}\otimes {\bf 1}\otimes \cdots\otimes {\bf 1}+{\bf 1}\otimes \cdots\otimes {\bf 1}\otimes a(-1){\bf 1}.
$$
It is known that the vertex operator subalgebra $U$ of $V$ generated by $\frak{g}$ is isomorphic to the simple vertex operator algebra $L_{\widehat{\frak{g}}}(l,0)$ (see for example \cite{FZ} and \cite{LL}). Set
$$
C_{V}(U)=\{v\in V| [Y(u,z_1), Y(v,z_2)]=0, u\in U \}.
$$
Then (see \cite{LL})
$$
C_{V}(U)=\{v\in V| u_{m}v=0, u\in U, m\geq 0 \}
$$
is a vertex operator subalgebra of $V$ with a different conformal vector. We will denote $L_{\widehat{\frak{g}}}(l,\bar{\Lambda})$ the irreducible module of the vertex operator algebra $L_{\widehat{\frak{g}}}(l,0)$ associated to the  highest weight $\Lambda$ of $\widehat{\frak{g}}$. $\bar{\Lambda}=\Lambda|_{\frak{h}}$ is defined as follows ( see \S 6.2 in \cite{K}):
$$
\Lambda=\bar\Lambda+l\Lambda_{0}+m\delta,
$$
for some $m\in\Z$, where $\Lambda_{0}$ is the highest weight of $L_{\widehat{\frak{g}}}(1,0)$ as an irreducible  highest weight   $\widehat{\frak{g}}$-module.
We have the following lemma.
\begin{lem}$C_{V}(U)$ is a simple vertex operator subalgebra of $V$.
\end{lem}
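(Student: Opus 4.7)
My plan is to reduce the lemma to the standard commutant-simplicity principle: in a simple vertex operator algebra, the commutant of a rational simple subalgebra is itself simple, provided the ambient algebra decomposes as a direct sum of irreducible modules over the subalgebra. I verify both hypotheses for the pair $(V,U)$.

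First, each factor $L_{\widehat{\frak g}}(l_i,0)$ is a simple rational VOA, so the tensor product $V$ is itself simple and rational. Next, the diagonal embedding places $\frak g$ inside $V_1$ at total level $l_1+\cdots+l_s=l$, and the vertex operator subalgebra it generates is isomorphic to the simple rational affine VOA $U\cong L_{\widehat{\frak g}}(l,0)$, as recorded in the paragraph preceding the lemma. Rationality of $U$, together with the fact that $V$ is an ordinary $U$-module (finite-dimensional graded pieces, weights bounded below), then yields the isotypic decomposition
\[ V = \bigoplus_{\bar\Lambda} L_{\widehat{\frak g}}(l,\bar\Lambda)\otimes M(\bar\Lambda), \]
where $M(\bar\Lambda) = \Hom_U(L_{\widehat{\frak g}}(l,\bar\Lambda),V)$ is a $C_V(U)$-module and $M(0) = C_V(U)$.

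Now assume $J\subseteq C_V(U)$ is a nonzero VOA ideal; I want to deduce $J = C_V(U)$. Consider
\[ W = \bigoplus_{\bar\Lambda} L_{\widehat{\frak g}}(l,\bar\Lambda)\otimes (J\cdot M(\bar\Lambda)), \]
where $J\cdot M(\bar\Lambda)$ denotes the $C_V(U)$-submodule of $M(\bar\Lambda)$ generated by modes of elements of $J$. Because $U$ and $C_V(U)$ mutually centralize, and because $J$ is a two-sided ideal of $C_V(U)$, the subspace $W$ is stable under the modes of both $U$ and $C_V(U)$. Once $W$ is shown to be stable under the modes of all of $V$, it is a nonzero ideal, hence equal to $V$ by simplicity; reading off the $\bar\Lambda=0$ component then gives $J\cdot C_V(U) = C_V(U)$, and since $j_{-1}\mathbf{1} = j$ for $j\in J$, this forces $J = C_V(U)$.

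The main obstacle is precisely the $V$-stability of $W$, i.e.\ stability under modes of vectors lying outside $U\cdot C_V(U)$. This is the content of the general commutant-simplicity principle for VOAs, whose hypotheses (simplicity of $V$ together with complete reducibility of $V$ as a $U$-module) are exactly what I verified above; the argument is already contained in the treatment of coset constructions in \cite{FZ} and is reviewed in \cite{LL}, so I would invoke it rather than reprove it here.
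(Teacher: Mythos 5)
Your setup coincides with the paper's: both arguments use rationality of $U$ to write $V=\bigoplus_{\bar\Lambda}L_{\widehat{\frak g}}(l,\bar\Lambda)\otimes M(\bar\Lambda)$ with $M(0)=C_V(U)$, take a nonzero ideal $J\subseteq C_V(U)$, and aim to conclude via simplicity of $V$. But the one step that carries the entire content of the lemma --- showing that your subspace $W=\bigoplus_{\bar\Lambda}L_{\widehat{\frak g}}(l,\bar\Lambda)\otimes(J\cdot M(\bar\Lambda))$ is stable under the modes of \emph{all} elements of $V$, equivalently that the $V$-ideal generated by $U\otimes J$ meets $U\otimes C_V(U)$ only in $U\otimes J$ --- is exactly the step you do not supply. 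You defer it to a ``general commutant-simplicity principle'' attributed to \cite{FZ} and \cite{LL}, but neither reference contains such a theorem in citable form: \cite{FZ} only introduces the commutant (coset) construction and raises the problem of computing it, and \cite[\S 3.11]{LL} only establishes that $C_V(U)$ is a vertex operator subalgebra and sorts out the conformal vectors; no simplicity statement for commutants is proved there. Since the lemma \emph{is} this simplicity assertion, invoking it as already known is circular relative to the cited literature, so the proposal has a genuine gap. (Your peripheral verifications --- simplicity of $V$, complete reducibility over $U$, and the reduction from $J\cdot C_V(U)=C_V(U)$ to $J=C_V(U)$ via $j_{-1}\mathbf 1=j$ and the ideal property --- are fine, and rationality of $V$ itself is not needed.)

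For comparison, the paper closes precisely this gap with a short argument: for $u\in U\otimes W$ and $v\in L_{\widehat{\frak g}}(l,\bar\Lambda)\otimes M(\Lambda)$ one has $u_mv\in L_{\widehat{\frak g}}(l,\bar\Lambda)\otimes M(\Lambda)$ (modes of $U$ preserve each isotypic component because it is a $U$-submodule, and modes of $W\subseteq C_V(U)$ commute with those of $U$), and then the description of the ideal generated by a subset together with skew-symmetry, \cite[Cor.~4.5.10, Prop.~3.1.19]{LL}, shows that the ideal of $V$ containing $U\otimes W$ intersects $U\otimes C_V(U)$ exactly in $U\otimes W$; simplicity of $V$ forces that ideal to be all of $V$, whence $U\otimes W=U\otimes C_V(U)$ and $W=C_V(U)$. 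If you replace the appeal to the unnamed ``principle'' by this ideal-generation and skew-symmetry argument (or give any equivalent proof that your $W$ is $V$-stable), your proof becomes complete; as written, the decisive step is missing.
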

\begin{proof} Since $U$ is rational, it follows that as a $U$-module $V$ is completely reducible. Then
\[
V=U\otimes C_{V}(U)\bigoplus_{\Lambda}L_{\widehat{\frak{g}}}(l,\bar{\Lambda})\otimes M(\Lambda),
\]
with the summation over $ \Lambda$ such that $\bar{\Lambda}\neq 0$ and $L_{\widehat{\frak{g}}}(l,\bar{\Lambda})$ are irreducible $L_{\widehat{\frak{g}}}(l,0)$-modules and  $M(\Lambda)$ are modules of $C_{V}(U)$. Let $W$ be a non-zero ideal of $C_{V}(U)$. Note that for any $u\in U\otimes W$, $v\in L_{\widehat{\frak{g}}}(l,\bar{\Lambda})\otimes M(\Lambda)$, and $m\in\Z$, we have $u_mv\in L_{\widehat{\frak{g}}}(l,\bar{\Lambda})\otimes M(\Lambda)$. By \cite[Cor. 4.5.10, Prop. 3.1.19]{LL}, the intersection  of $U\otimes C_{V}(U)$ and the ideal of $V$ containing $U\otimes W$ is $U\otimes W$. Since $V$ is simple, it follows that $W=C_{V}(U)$. 
\end{proof}

\vskip 0.3cm
 In the rest of  this paper, we always assume that $\frak{g}=\frak{sl}_{n}(\C)$ ($n\geq 2$) is the  simple Lie algebra of type $A_{n-1}$ over $\C$. In this section we  study the case that $s\geq 2$ and $l_{1}=\cdots=l_{s}=1$. Then $l=|\underline{\ell}|=s$ and
$$
V=L_{\widehat{\frak{sl}_{n}}}(1,0)^{\otimes l},  \ \ U\cong L_{\widehat{\frak{sl}_{n} }}(l,0).
$$

  Note that $L_{\widehat{\frak{sl}_{n} }}(1,0)\cong V_{A_{n-1}}$, where  $V_{A_{n-1}}$ is the lattice vertex operator algebra associated to the  root lattice  $A_{n-1}$.

Let
$$A_{n-1}^{\times l}=\bigoplus_{i=1}^{n-1}\bigoplus_{j=1}^{l}\Z\alpha^{ij}$$
 be a lattice such that
for  $1\leq i,p\leq n-1$, $1\leq j,q\leq l$,
$$
(\alpha^{ij},\al^{pq})=0, \ {\rm if} \ j\neq q,
$$
$$
(\alpha^{ij},\al^{pj})=\left\{
\begin{array}{ll}
 2, & \ {\rm if}  \ p=i,\\
-1, & \ {\rm if} \ i=p-1 \ {\rm or} \ i=p+1,\\
0  & \ {\rm otherwise.}
\end{array}
\right.
$$
Then the lattice vertex operator algebra $V_{A_{n-1}^{\times l}}$ is isomorphic to $V$ and the $j^{th}$ factor of $V$ is the lattice vertex operator algebra $V_{A_{n-1}^{(j)}}\cong V_{A_{n-1}}$ with $A_{n-1}^{(j)}=\bigoplus_{i=1}^{n-1}\Z\al^{ij}$.

 For $1\leq i\leq n-1$, define
$$
e^{i}=\sum\limits_{j=1}^{l}e^{\al^{ij}}, \ e^{-i}=\sum\limits_{j=1}^{l}e^{-\al^{ij}}.
$$
Since $L_{\widehat{\frak g}}(l,0)$ is generated by $e^{i}, e^{-i}$, $i=1,2,\cdots, n-1$, it follows that
$$
C_{V}(U)=\{v\in V| e^{i}_{m}v=e^{-i}_{m}v=0, 1\leq i\leq n-1, m\geq 0 \}.
$$
Let $N_{n}^{l}$ be  the sublattice
$$
 N_{n}^{l}=\bigoplus_{i=1}^{n-1}\bigoplus_{j=1}^{l-1}\Z(\al^{ij}-\al^{i,j+1})\subseteq A_{n-1}^{\times l}
$$
 (with the restriction of the bilinear form of $A_{n-1}^{\times l}$) and  $V_{N_n^{l}}$ the vertex operator algebra   associated with $N_{n}^l$. It is obvious that $V_{N_n^{l}}\subseteq V$.
 Set
  \begin{equation}\label{lattice1}
  K=\bigoplus_{i=1}^{n-1}\Z\al^i, \ \frak{h}_n=\bigoplus_{i=1}^{n-1}\C\al^{i}(-1){\bf 1},\end{equation}
  where $\al^i=\sum\limits_{j=1}^{l}\al^{ij}$, $\al^i(-1){\bf 1}=\sum\limits_{j=1}^{l}\al^{ij}(-1){\bf 1}$, $i=1,2,\cdots,n-1, j=1,2,\cdots,l$. Then $K\cong \sqrt{l}A_{n-1}$.

  Let $L_{\widehat{{\frak h}_n}}(l,0)\subseteq L_{\widehat{\sl_{n}}}(l,0)$ be a Heisenberg vertex operator subalgebra  of $V$ generated by $\frak{h}_n$. Then  as an $L_{\widehat{\frak{h}_n}}(l,0)$-module, $V$ is completely reducible. Note that $V$ is linearly spanned by
$$
\gamma^1(-m_{1})\cdots \gamma^k(-m_k)\otimes e^{\al}, \ \gamma^1,\cdots,\gamma^k, \al\in A_{n-1}^{\times l}, k\geq 0, m_{1}\geq m_{2}\geq \cdots\geq m_{k}\geq 1,
$$
and $(\sum\limits_{j=1}^{l}\al^{ij}, \al^{pq}-\al^{p,q+1})=0$ for any $1\leq i,p\leq n-1$, $1\leq q\leq l$. Note also that $$\C\otimes_{\Z} A_{n-1}^{\times l}=\C\otimes_{\Z}K\oplus\C\otimes_{\Z}N_{n}^{l}.$$
Then it is easy to see that
 as an $\widehat{\frak{h}}_n$-module, $V$ is a direct sum of highest weight modules of $\widehat{\frak{h}}_n$ with highest weight vectors
$$
h^1(-m_{1})\cdots h^k(-m_k)\otimes e^{\al}, \ h^1,\cdots,h^k\in N_{n}^{l}, \al\in A_{n-1}^{\times l}, k\geq 0, m_{1}\geq m_{2}\geq \cdots\geq m_{k}\geq 1.
$$
Then we have
\begin{equation}\label{para3}
V_{N_{n}^l}=\{v\in V\;|\; \sum\limits_{j=1}^{l}\al^{ij}(m)v=0, 1\leq i\leq n-1, m\geq 0\}.\end{equation}
So we get the following lemma.
\begin{lem}
$C_{V}(L_{\widehat{{\frak h}_{n}}}(l,0))=V_{N_{n}^{l}}$. In particular, $C_{V}(U)\subseteq V_{N_n^{l}}.$
\end{lem}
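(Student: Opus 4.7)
The plan is to reduce the lemma almost entirely to the characterization already established in \eqref{para3}, together with a standard general fact about commutants of Heisenberg subalgebras.

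First, I would record the following general principle: if a vertex operator subalgebra $W\subseteq V$ is the Heisenberg vertex operator algebra generated by a commutative subspace $\mathfrak{a}\subseteq V_1$, then $v\in C_V(W)$ if and only if $h(m)v=0$ for every $h\in\mathfrak{a}$ and every $m\geq 0$. One direction follows directly from the alternative description $C_V(W)=\{v\in V \mid u_k v=0,\ u\in W,\ k\geq 0\}$ applied to the generating vectors $u=h(-1)\mathbf{1}$, since then $u_k=h(k)$. The converse is a bracket computation: every element of $W$ is a linear combination of monomials $h_1(-m_1)\cdots h_r(-m_r)\mathbf{1}$, whose modes are polynomials in the $h(m)$'s with $m\in\Z$, and an induction using $[h(m),h'(k)]=m(h,h')\delta_{m+k,0}\,l\cdot\mathrm{id}$ together with the commutator formula for vertex operators reduces the condition $u_k v=0$ ($k\geq 0$) for these monomials to the family of conditions $h(m)v=0$ for $m\geq 0$.

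Applying this principle to $\mathfrak{a}=\mathfrak{h}_n$, which is spanned by $\alpha^{i}=\sum_{j=1}^{l}\alpha^{ij}$, yields
\[
C_V\bigl(L_{\widehat{\mathfrak{h}_n}}(l,0)\bigr)
=\Bigl\{v\in V \;\Bigm|\; \sum_{j=1}^{l}\alpha^{ij}(m)v=0,\ 1\leq i\leq n-1,\ m\geq 0\Bigr\}.
\]
This is literally the right-hand side of \eqref{para3}, so it coincides with $V_{N_{n}^{l}}$, giving the first assertion.

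For the inclusion $C_V(U)\subseteq V_{N_n^l}$, the only observation needed is that the Cartan generators $\alpha^{i}(-1)\mathbf{1}$ of $\mathfrak{h}_n$ lie in the diagonally embedded copy of $\mathfrak{sl}_n$ used to generate $U\cong L_{\widehat{\mathfrak{sl}_n}}(l,0)$, so $L_{\widehat{\mathfrak{h}_n}}(l,0)$ is a vertex operator subalgebra of $U$. The commutant is order-reversing in its argument, hence
\[
C_V(U)\;\subseteq\;C_V\bigl(L_{\widehat{\mathfrak{h}_n}}(l,0)\bigr)\;=\;V_{N_n^l}.
\]
There is no substantive obstacle here: the lemma is essentially bookkeeping, because the genuinely non-trivial identification of the joint kernel of the modes $\alpha^{i}(m)$, $m\geq 0$, with the lattice vertex operator subalgebra $V_{N_n^l}$ has already been carried out through the Heisenberg decomposition of $V$ preceding \eqref{para3}, which uses the orthogonal decomposition $\C\otimes_{\Z}A_{n-1}^{\times l}=\C\otimes_{\Z}K\oplus\C\otimes_{\Z}N_n^l$.
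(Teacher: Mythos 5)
Your proposal is correct and follows essentially the same route as the paper: the paper's own argument for this lemma is exactly the paragraph deriving \eqref{para3} from the Heisenberg decomposition of $V$ (using $\C\otimes_{\Z}A_{n-1}^{\times l}=\C\otimes_{\Z}K\oplus\C\otimes_{\Z}N_n^{l}$), after which the lemma is the identification of $C_{V}(L_{\widehat{\frak{h}_n}}(l,0))$ with the joint kernel of the modes $\alpha^{i}(m)$, $m\geq 0$, plus the inclusion $L_{\widehat{\frak{h}_n}}(l,0)\subseteq U$ reversing under commutants. Your explicit verification of the commutant-equals-kernel principle for Heisenberg subalgebras is a detail the paper leaves implicit, not a different method.
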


\vskip 0.3cm
For $1\leq i\leq n-1$, we have
$$
e^{i}_{-1}e^{-i}
=\frac{1}{2}\sum\limits_{j=1}^{l}\al^{ij}(-2){\bf 1}+\frac{1}{2}\sum\limits_{j=1}^{l}\al^{ij}(-1)^2{\bf 1}+\sum\limits_{1\leq p <q\leq l}(e^{\al^{ip}-\al^{iq}}+e^{-\al^{ip}+\al^{iq}}).
$$
Note that
$$
\sum\limits_{j=1}^{l}\al^{ij}(-1)^2{\bf 1}=\frac{1}{l}\sum\limits_{1\leq p<q\leq l}(\al^{ip}-\al^{iq})(-1)^{2}{\bf 1}+\frac{1}{l}(\al^{i1}+\cdots +\al^{il})(-1)^{2}{\bf 1}.
$$
So on $V_{N_n^{l}}$ we have
$$
(e^{i}_{-1}e^{-i})_m=\sum\limits_{1\leq p<q\leq l}[\frac{1}{2l}(\al^{ip}-\al^{iq})(-1)^{2}{\bf 1}+(e^{\al^{ip}-\al^{iq}}+e^{-\al^{ip}+\al^{iq}})]_{m}, \ m\geq 0.
$$
For $1\leq i\leq n-1$, set
\begin{equation}\label{eq3.1}
'{\omega}^{i}=\frac{1}{l+2}\sum\limits_{1\leq p<q\leq l}[\frac{1}{2l}(\al^{ip}-\al^{iq})(-1)^{2}{\bf 1}+(e^{\al^{ip}-\al^{iq}}+e^{-\al^{ip}+\al^{iq}})].
\end{equation}
The following lemma can be checked directly.
\begin{lem}
For $1\leq i\leq  n-1$, $'{\omega}^{i}$ is a Virasoro vector of $V$ with central charge $\frac{2(l-1)}{l+2}$.
\end{lem}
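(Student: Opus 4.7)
The plan is to realize ${'\omega^i}$ as the coset Virasoro vector for a level-$l$ affine $\sl_2$ subalgebra of $V$ modulo its Heisenberg subalgebra. For fixed $i$, the vectors $e^i=\sum_{j=1}^l e^{\al^{ij}}$, $e^{-i}=\sum_j e^{-\al^{ij}}$, and $\al^i(-1){\bf 1}=\sum_j \al^{ij}(-1){\bf 1}$ generate a sub-VOA isomorphic to $L_{\widehat{\sl_2}}(l,0)$: a direct computation using only $(\al^{ij},\al^{ij'})=2\delta_{jj'}$ gives $[e^i_0,e^{-i}]=\al^i(-1){\bf 1}$ and $[e^i_1,e^{-i}]=l{\bf 1}$, so the affine level is $l$, and $(\al^i,\al^i)=2l$.

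Let $\omega_{\mathrm{sug}}^{(i)}$ denote the Sugawara conformal vector of this $L_{\widehat{\sl_2}}(l,0)$, of central charge $\tfrac{3l}{l+2}$, and $\omega_H^{(i)}=\tfrac{1}{4l}\al^i(-1)^2{\bf 1}$ the Heisenberg conformal vector of $\al^i$, of central charge $1$. The standard coset construction then guarantees that $\omega_{\mathrm{sug}}^{(i)}-\omega_H^{(i)}$ is a Virasoro vector of central charge $\tfrac{3l}{l+2}-1=\tfrac{2(l-1)}{l+2}$, commuting with the Heisenberg modes of $\al^i$. It therefore suffices to verify the algebraic identity $\omega_{\mathrm{sug}}^{(i)}-\omega_H^{(i)}={'\omega^i}$. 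With dual basis $\{\al^i/2, e^{-i}, e^i\}$ to $\{\al^i, e^i, e^{-i}\}$ under the appropriately normalized invariant form, the Sugawara vector reads
\[
\omega_{\mathrm{sug}}^{(i)} = \tfrac{1}{2(l+2)}\bigl[\tfrac12\al^i(-1)^2 + e^i(-1)e^{-i} + e^{-i}(-1)e^i\bigr]{\bf 1}.
\]
Expanding via the lattice OPE: for $p=q$, $(e^{\pm\al^{ip}})_{-1}e^{\mp\al^{ip}}=\bigl(\tfrac12\al^{ip}(-1)^2\pm\al^{ip}(-2)\bigr){\bf 1}$, so the $\al^{ip}(-2)$ contributions cancel after symmetrization; for $p\ne q$, orthogonality yields $e^{\pm(\al^{ip}-\al^{iq})}$. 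Thus
\[
e^i(-1)e^{-i}+e^{-i}(-1)e^i = \sum_{j=1}^l \al^{ij}(-1)^2{\bf 1}+\sum_{p\ne q} e^{\al^{ip}-\al^{iq}}.
\]
A short rearrangement using $\al^i(-1)^2=\sum_j\al^{ij}(-1)^2+2\sum_{p<q}\al^{ip}(-1)\al^{iq}(-1)$ and $(\al^{ip}-\al^{iq})(-1)^2=\al^{ip}(-1)^2+\al^{iq}(-1)^2-2\al^{ip}(-1)\al^{iq}(-1)$ collects the Heisenberg part into $\tfrac{1}{2l(l+2)}\sum_{p<q}(\al^{ip}-\al^{iq})(-1)^2{\bf 1}$, matching ${'\omega^i}$ on the nose.

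The main technical obstacle is the bookkeeping of the 2-cocycle $\epsilon$ for the lattice central extension $\widehat{A_{n-1}^{\times l}}$: the statement uses coefficient $+1$ on each exponential term, which forces a section satisfying $\epsilon(\al^{ip},-\al^{iq})=\epsilon(-\al^{ip},\al^{iq})=1$ for all $p,q$. Since all pairwise inner products of the $\al^{ip}$ are even, a bimultiplicative section with these properties exists; alternatively, one absorbs the residual signs into a rescaling of the symbols $e^{\pm\al^{ij}}$, leaving the rest of the argument untouched.
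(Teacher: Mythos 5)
Your coset argument is sound and it is genuinely different from the paper's treatment: the paper disposes of this lemma with ``can be checked directly,'' i.e.\ a direct verification of the Virasoro relations for ${}'\omega^i$ inside the lattice vertex operator algebra, whereas you identify ${}'\omega^i$ with the GKO coset vector $\omega^{(i)}_{\mathrm{sug}}-\omega^{(i)}_H$ attached to the level-$l$ $\widehat{\sl_2}$ generated by $e^i$, $e^{-i}$, $\al^i(-1)\mathbf{1}$ and its Heisenberg subalgebra. This buys a conceptual explanation of the value $\tfrac{3l}{l+2}-1=\tfrac{2(l-1)}{l+2}$ and shows in the same stroke that the modes of ${}'\omega^i$ commute with those of $\al^i$, which is exactly how ${}'\omega^i$ is used afterwards (it is the $K(\sl_2,l)$-type conformal vector; compare the paper's $\widetilde{\omega}^i$ and the references to DLY2, DLWY, DW1). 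Your handling of the cocycle is also consistent with the sign convention the paper itself uses implicitly when it computes $e^i_{-1}e^{-i}$.

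One bookkeeping slip should be fixed. Each of $e^i_{-1}e^{-i}$ and $e^{-i}_{-1}e^{i}$ contributes the full ordered cross-term sum $\sum_{p\neq q}e^{\al^{ip}-\al^{iq}}$, so the symmetrized product is
\begin{equation*}
e^i_{-1}e^{-i}+e^{-i}_{-1}e^{i}=\sum_{j=1}^{l}\al^{ij}(-1)^2\mathbf{1}+2\sum_{1\leq p<q\leq l}\bigl(e^{\al^{ip}-\al^{iq}}+e^{-\al^{ip}+\al^{iq}}\bigr),
\end{equation*}
twice the exponential part you display (also, the diagonal term is $\bigl(\tfrac12\al^{ip}(-1)^2\pm\tfrac12\al^{ip}(-2)\bigr)\mathbf{1}$, harmless since those terms cancel). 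With the corrected factor the identification does come out exactly: the exponential part of $\omega^{(i)}_{\mathrm{sug}}-\omega^{(i)}_H$ is $\tfrac{1}{l+2}\sum_{p<q}(e^{\al^{ip}-\al^{iq}}+e^{-\al^{ip}+\al^{iq}})$, and the quadratic part is $\bigl(\tfrac{1}{4(l+2)}-\tfrac{1}{4l}\bigr)\al^i(-1)^2\mathbf{1}+\tfrac{1}{2(l+2)}\sum_j\al^{ij}(-1)^2\mathbf{1}=\tfrac{1}{2l(l+2)}\sum_{p<q}(\al^{ip}-\al^{iq})(-1)^2\mathbf{1}$, matching ${}'\omega^i$ on the nose; with the halved cross terms as written it would not match, so correct the displayed identity rather than the conclusion.
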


Let $K$ be defined as in (\ref{lattice1}) and  $V_{K}$  the lattice vertex operator  algebra associated to the lattice $K$. Since
  $$(e^{\al_{i1}}+\cdots+e^{\al_{il}})^{l-1}_{-1}(e^{\al_{i1}}+\cdots+e^{\al_{il}})=l!e^{\al^{i1}+\cdots+\al^{il}}=l!e^{\al^i}, \ i=1,2,\cdots,n-1,$$
 it follows that $e^{\al^i}\in L_{\widehat{\sl_{n}}}(l,0)$, $1\leq i\leq n-1$. Similarly, $e^{-\al^i}\in L_{\widehat{\sl_{n}}}(l,0)$, $1\leq i\leq n-1$. Therefore $V_{K}$ is a vertex operator subalgebra of $L_{\widehat{\sl_{n}}}(l,0)$. Hence every $L_{\widehat{\sl_n}}(l,0)$-module regarded as a $V_{K}$-module is completely reducible. By Lemma \ref{Dong-L}, every irreducible module of $V_{K}$ comes from $V_{K+\gamma},\gamma\in K^{\circ}/K$. It is known that  the lowest weight of $V_{K+\gamma}$ for $0\neq \gamma\in K^{\circ}/K$ is positive. Note also that $\{v\in V_{K}|h_{m}v=0, h\in \frak{h}_{n},m\geq 0\}=\C{\bf 1}$. Then  we have
 \begin{equation}\label{de11}
 K(\sl_n,l)=\{v\in L_{\widehat{\sl_n}}(l, 0)|u_{m}v=0, u\in V_{K}, m\geq 0\}
 \end{equation}
 and
 \begin{equation}\label{decomp1}
 L_{\widehat{\sl_n}}(l, 0)=\bigoplus_{\gamma\in K^{\circ}/K}V_{K+\gamma}\otimes U^{(l,n)}(0,\gamma),
 \end{equation}
 where $U^{(l,n)}(0,0)=K(\sl_n,l)$ and $U^{(l,n)}(0,\gamma)$ for $\gamma\neq 0$ are modules of $K(\sl_n,l)$ (also see \cite{DW3}).
 More generally, let $L_{\widehat{\sl_n}}(l, \bar{\Lambda})$ be  an irreducible module of the vertex operator algebra  $L_{\widehat{\sl_{n}}}(l,0)$. Then we can decompose $L_{\widehat{\sl_n}}(l, \bar{\Lambda})$ as follows:
 \begin{equation}\label{decomp2}
 L_{\widehat{\sl_n}}(l, \bar{\Lambda})=\bigoplus_{\gamma\in K^{\circ}/K}V_{K+\bar{\Lambda}+\gamma}\otimes U^{(l,n)}(\bar{\Lambda},\gamma),
 \end{equation}
 where $U^{(l,n)}(\bar{\Lambda},\gamma)$ for $\gamma\in K^{\circ}$ are modules of $K(\sl_n,l)$. We have the following lemma.
 \begin{lem}\label{irr-1}
(1) In (\ref{decomp1}) and (\ref{decomp2}), if $U^{(l,n)}(\bar{\Lambda},\gamma)\neq 0$, then $U^{(l,n)}(\bar{\Lambda},\gamma)$ is an irreducible $K(\sl_n,l)$-module.

(2) $U^{(l,n)}(0,0)$ is not isomorphic to $U^{(l,n)}(\bar{\Lambda}, -\bar{\Lambda})$ as $K(\sl_n,l)$-modules, for any $\Lambda\neq 0$.
\end{lem}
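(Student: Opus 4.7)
For part (1), the plan is a standard dual-pair argument. Because $K\cong\sqrt{l}A_{n-1}$ is a positive-definite even lattice, Theorem~\ref{Dong-L} tells us that $V_K$ is rational with pairwise non-isomorphic irreducibles $V_{K+\gamma}$ for $\gamma\in K^\circ/K$, and that the space $I_{V_K}\binom{V_{K+\mu+\nu}}{V_{K+\mu}\;V_{K+\nu}}$ is one-dimensional for every $\mu,\nu\in K^\circ$. Decomposing the irreducible $L_{\widehat{\sl_n}}(l,0)$-module $L_{\widehat{\sl_n}}(l,\bar{\Lambda})$ as a $V_K\otimes K(\sl_n,l)$-module gives (\ref{decomp2}); each multiplicity space $U^{(l,n)}(\bar{\Lambda},\gamma)$ is automatically a $K(\sl_n,l)$-module since $K(\sl_n,l)$ commutes with $V_K$. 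I would then suppose $0\subsetneq U'\subsetneq U^{(l,n)}(\bar{\Lambda},\gamma_0)$ is a proper nonzero $K(\sl_n,l)$-submodule and show that the sub-$L_{\widehat{\sl_n}}(l,0)$-module of $L_{\widehat{\sl_n}}(l,\bar{\Lambda})$ generated by $V_{K+\bar{\Lambda}+\gamma_0}\otimes U'$ is proper, contradicting the irreducibility of $L_{\widehat{\sl_n}}(l,\bar{\Lambda})$. The key input is that for $x\in V_{K+\mu}\otimes U^{(l,n)}(0,\mu)\subseteq L_{\widehat{\sl_n}}(l,0)$, the vertex operator $Y(x,z)$ acting between summands of (\ref{decomp2}) factors as a tensor product of a $V_K$-intertwining operator (unique up to scalar by Theorem~\ref{Dong-L}(2)) and a $K(\sl_n,l)$-intertwining operator; the generated submodule therefore has the form $\bigoplus_\mu V_{K+\bar{\Lambda}+\gamma_0+\mu}\otimes U'_\mu$, whose $\mu=0$ component is just $V_{K+\bar{\Lambda}+\gamma_0}\otimes U'$---already a proper subspace---so the full submodule is proper.

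For part (2), I would use a conformal-weight argument based on the splitting $\omega_{L_{\widehat{\sl_n}}(l,0)}=\omega_K+\omega_{K(\sl_n,l)}$, which makes $L(0)$ additive with respect to the tensor decomposition (\ref{decomp2}). In $V_K$ the minimal conformal weight is $0$, attained only by the vacuum $\mathbf{1}_K$. Thus if $U^{(l,n)}(\bar{\Lambda},-\bar{\Lambda})$ contained a weight-zero vector $w$, then $\mathbf{1}_K\otimes w$ would be a weight-zero vector inside $V_K\otimes U^{(l,n)}(\bar{\Lambda},-\bar{\Lambda})\subseteq L_{\widehat{\sl_n}}(l,\bar{\Lambda})$; but the latter has minimal conformal weight $h_{\bar{\Lambda}}=\frac{(\bar{\Lambda},\bar{\Lambda}+2\rho)}{2(n+l)}>0$ for $\bar{\Lambda}\neq 0$. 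Hence $U^{(l,n)}(\bar{\Lambda},-\bar{\Lambda})$ has no weight-zero vector, whereas $U^{(l,n)}(0,0)=K(\sl_n,l)$ contains the vacuum of weight zero. Since any $K(\sl_n,l)$-module isomorphism intertwines $L(0)$ and thus preserves weight subspaces, the two modules cannot be isomorphic.

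\textbf{Main obstacle.} The delicate step is the separation-of-variables claim in part~(1): making rigorous the assertion that the $L_{\widehat{\sl_n}}(l,0)$-action on (\ref{decomp2}) factors through $V_K$- and $K(\sl_n,l)$-intertwining operators. This rests on the simple-current/multiplicity-free nature of the $V_{K+\mu}$ coming from Theorem~\ref{Dong-L}(2). If a direct verification proves cumbersome, a natural alternative is to invoke a general dual-pair/double-commutant theorem in the spirit of \cite{ADL}, which gives irreducibility of multiplicity spaces directly, at the cost of verifying that $V_K$ and $K(\sl_n,l)$ are mutual commutants in $L_{\widehat{\sl_n}}(l,0)$.
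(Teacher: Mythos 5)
Your proposal is correct and follows essentially the same route as the paper's proof: in part (1) you generate an $L_{\widehat{\sl_n}}(l,0)$-submodule from $V_{K+\bar{\Lambda}+\gamma_0}\otimes U'$ and use the lattice fusion rules of Theorem~\ref{Dong-L}(2) to see that only the $V_K\otimes K(\sl_n,l)$-component of the algebra acts within the $\gamma_0$-sector (so that sector of the generated submodule is exactly $V_{K+\bar{\Lambda}+\gamma_0}\otimes U'$), and then invoke simplicity of $L_{\widehat{\sl_n}}(l,\bar{\Lambda})$, which is precisely the paper's argument; in part (2) you compare lowest conformal weights via the splitting $\omega=\omega_K+\omega''$ and positivity of the lowest weight of $L_{\widehat{\sl_n}}(l,\bar{\Lambda})$ for $\Lambda\neq 0$, again as in the paper. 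Only the vanishing criterion in Theorem~\ref{Dong-L}(2) is needed for the sector-tracking (one-dimensionality of the fusion spaces plays no role), and both your argument and the paper's implicitly use the standard fact that the submodule generated by a subspace is spanned by single applications $u_m v$.
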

\begin{proof} For $\gamma\in K^{\circ}/K$ such that $U^{(l,n)}(\bar{\Lambda},\gamma)\neq 0$, let $W$ be a non-zero $K(\sl_n,l)$-submodule of $U^{(l,n)}(\bar{\Lambda},\gamma)$. Let $M$ be the $L_{\widehat{\sl_n}}(l,0)$-submodule generated by $W$. For $\gamma_{1}\in  K^{\circ}/K$, let $u\in V_{\bar{\Lambda}+\gamma_{1}}\otimes U^{(l,n)}(\bar{\Lambda},\gamma_{1})$, $v\in V_{\bar{\Lambda}+\gamma}\otimes W$. Then by (2) of Lemma \ref{Dong-L},
$$
u_{m}v\in V_{2\bar{\Lambda}+\gamma_{1}+\gamma}\otimes U^{(l,n)}(2\bar{\Lambda}+\gamma_{1}+\gamma), \ m\in\Z.
$$
Therefore
$
u_{m}v\in V_{\bar{\Lambda}+\gamma}\otimes U^{(l,n)}(\bar{\Lambda}+\gamma)$, for $\ m\in\Z
$
if and only if $\bar{\Lambda}+\gamma_{1}\in K$. That is, $V_{\bar{\Lambda}+\gamma_{1}}=V_{K}$. Then we have $u_{m}v\in V_{\bar{\Lambda}+\gamma}\otimes W$. Hence
$$M\cap (V_{\bar{\Lambda}+\gamma}\otimes U^{(l,n)}(\bar{\Lambda}+\gamma))=V_{\bar{\Lambda}+\gamma}\otimes W.$$ Since $L_{\widehat{\sl_n}}(l,\bar{\Lambda})$ is simple, it follows that $W=U^{(l,n)}(\bar{\Lambda},\gamma)$. (1) is proved.

Let $L_{\widehat{\sl_n}}(l,\bar{\Lambda})$ be an irreducible module of the vertex operator algebra  $L_{\widehat{\sl_n}}(l,0)$ such that $\Lambda\neq 0$. Then the lowest weight of  $L_{\widehat{\sl_n}}(l,\bar{\Lambda})$ is positive ( see \cite{LL} and also Lemma 12.8 in \cite{K}). Since $U^{(l,n)}(\bar{\Lambda},-\bar{\Lambda})$ as a $K(\sl_n,l)$-module  has the same lowest weight with  $L_{\widehat{\sl_n}}(l,\bar{\Lambda})$, it follows that $U^{(l,n)}(0,0)$ can not be isomorphic to $U^{(l,n)}(\bar{\Lambda},-\bar{\Lambda})$, since the lowest weight of $U^{(l,n)}(0,0)$ as a $K(\sl_n,l)$-module is zero.
\end{proof}

%\vskip 0.3cm
Note that as an $\sl_{n}$-module, $V$ is integrable and  the weight system of $V$ is a subset  of the root lattice of $\sl_{n}$. Then for each irreducible $\sl_{n}$-submodule $W$ of $V$, there is a non-zero vector $u$ such that $h_{0}u=0$ for all $h\in \frak{h}_{n}$.
  On the other hand, since  $U=L_{\widehat{\sl_{n}}}(l,0)$ is rational, it follows that as an $L_{\widehat{\sl_{n}}}(l,0)$-module, $V$ is completely reducible. So
we have
\begin{equation}\label{de2}
V=L_{\widehat{\sl_{n}}}(l,0)\otimes C_{V}(U)\bigoplus(\bigoplus_{\la\in P^{n}_{+},\la\neq l\Lambda_{0}}L_{\widehat{\sl_{n}}}(l,\bar{\lambda})\otimes M^{(l,n)}(\bar{\lambda})),
\end{equation}
where  $P(l\Lambda_0)$ is the set of weights of $L_{\widehat{\sl_{n}}}(l,0)$ and
$$P^n_{+}=\{\la\in P(l\Lambda_{0})| \la+\delta\notin P(l\Lambda_{0}), \ (\la,\al_{i})\in\Z_{\geq 0}, i=0,1,\cdots,n-1\}.$$
 $\al_{0},\cdots,\al_{n-1}$ are the simple roots of the affine Lie algebra $\widehat{\sl_{n}}$, $\la=l\Lambda_{0}+\bar{\la}-m\delta$ for some $m\geq 0$ as introduced in $\S 6.2 $ in \cite{K}, and $(\Lambda_{0},\al_{i})=\delta_{i0}$, $i=0,1,\cdots,n-1$.
 $M^{(l,n)}(\bar{\lambda})\subseteq V_{N_n^{l}}$ are modules of $C_{V}(U)$, $\la\in P^n_{+}$. As above  considering  $V$ as a module of the Heisenberg vertex operator algebra $L_{\widehat{\frak{h}_{n}}}(l,0)$, together with (\ref{de2}) we have
\begin{equation}\label{de3}
V_{N_{n}^l}=K(\sl_{n},l)\otimes C_{V}(U)\bigoplus(\bigoplus_{\la\in P_{+}^n,\la\neq l\Lambda_{0}}W^{(l,n)}(\bar{\lambda})\otimes M^{(l,n)}(\bar{\lambda})),
\end{equation}
where we denote $W^{(l,n)}(\bar{\lambda})=U^{(l,n)}(\bar{\la},-\bar{\la})$. Following the similar argument for (\ref{de11}), we have
\begin{equation}\label{de6}
U^{(l,n)}(\bar{\la},-\bar{\la})=\{v\in L_{\widehat{\sl_{n}}}(l,\bar{\la})|h_{m}v=0, h\in{\frak h}_n\}.
\end{equation}
Then by Lemma \ref{irr-1}, $W^{(l,n)}(\bar{\lambda})$ is an irreducible $K(\sl_n,l)$-module.
We have the following lemma.
\begin{lem}\label{de-pa1}
For $\la\in P^n_{+}$, $\la\neq l\Lambda_{0}$, let $M^{(l,n)}(\bar{\lambda})$ be as in (\ref{de2}). If $M^{(l,n)}(\bar{\lambda})\neq 0$, then as a $C_{V}(U)$-module, the lowest weight of $M^{(l,n)}(\bar{\lambda})$ is a positive number.
\end{lem}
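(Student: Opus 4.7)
The plan is to realize the abstract multiplicity space inside $V$ via \eqref{de2} and then exploit that $V \cong V_{A_{n-1}^{\times l}}$ is a positive-definite even lattice vertex operator algebra (so its conformal weights lie in $\Z_{\geq 0}$ with $V_0 = \C\mathbf{1}$). Let $m \in M^{(l,n)}(\bar\lambda)$ be a vector of lowest $C_V(U)$-weight $h_M$, and let $v_\lambda \in L_{\widehat{\sl_n}}(l,\bar\lambda)$ be the highest weight vector of Sugawara weight $h_\lambda = (\bar\lambda,\bar\lambda+2\rho)/(2(l+n)) > 0$. In \eqref{de2} the vector $v_\lambda \otimes m$ is realized as an $\widehat{\sl_n}$-highest weight vector in $V$ of $\sl_n$-weight $\bar\lambda$, and since $\omega_V = \omega_U + \omega_{C_V(U)}$ decomposes the conformal grading additively on the summand $L_{\widehat{\sl_n}}(l,\bar\lambda)\otimes M^{(l,n)}(\bar\lambda)$, its $\omega_V$-weight equals $h_\lambda + h_M$.

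Because $v_\lambda \otimes m$ has non-zero $\sl_n$-weight $\bar\lambda$ and hence is not proportional to $\mathbf{1}$, one has $h_\lambda + h_M \in \Z_{\geq 1}$, giving $h_M \geq 1 - h_\lambda$. This immediately yields $h_M > 0$ whenever $h_\lambda < 1$, covering every $\bar\lambda$ with $h_\lambda \notin \Z$. For the remaining boundary cases (with $h_\lambda$ a positive integer), I would sharpen by analyzing the minimum $\omega_V$-weight of vectors of $\sl_n$-weight $\bar\lambda$ in $V = \bigoplus_\alpha M(1)\otimes \C e^\alpha$. Cauchy--Schwarz on $\alpha=(\alpha^{(1)},\ldots,\alpha^{(l)})$ with $\sum_j \alpha^{(j)} = \bar\lambda$ gives $(\alpha,\alpha)/2 \geq |\bar\lambda|^2/(2l)$, with equality forcing $\bar\lambda \in lQ_{\sl_n}$. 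For $\bar\lambda \neq 0$ dominant integrable at level $l$, the level constraint $\sum m_i \leq l$ combined with dominance forces $\bar\lambda/l \in \{0,\omega_1,\ldots,\omega_{n-1}\}$, and no fundamental weight $\omega_a$ ($1\leq a\leq n-1$) lies in $Q_{\sl_n}$, so Cauchy--Schwarz is strict. Combined with the integrality of $V$-weights and the identity $|\bar\lambda|^2/(2l) - h_\lambda = (n|\bar\lambda|^2 - 2l(\bar\lambda,\rho))/(2l(l+n))$, this forces the $V$-weight of $v_\lambda \otimes m$ to strictly exceed $h_\lambda$ in the saturated regime $|\bar\lambda|^2/(2l) \geq h_\lambda$ (which includes the crucial cases $\bar\lambda = l\omega_a$), yielding $h_M > 0$.

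The main obstacle is the remaining regime where $h_\lambda$ is a positive integer but $|\bar\lambda|^2/(2l) < h_\lambda$, where Cauchy--Schwarz plus integrality alone is insufficient. To close this gap I would analyze $V_{h_\lambda}$ as a diagonal-$\sl_n$-module using the explicit lattice decomposition of $V = V_{A_{n-1}^{\times l}}$ and verify that the $\sl_n$-irreducible $V(\bar\lambda)$ does not appear as a summand, which rules out any $\widehat{\sl_n}$-highest weight vector of weight $\bar\lambda$ at $V$-weight exactly $h_\lambda$ and forces $h_M > 0$. For instance at $h_\lambda = 1$ one has $(V)_1 \cong \sl_n^{\oplus l}$ (adjoint) as diagonal-$\sl_n$-module, so $V(\bar\lambda)$ appears only for $\bar\lambda = \theta$, but $h_\theta = n/(l+n) < 1$ precludes the case $h_\lambda = 1$; an analogous (but more elaborate) case analysis of the diagonal-$\sl_n$ content of the higher finite-dimensional spaces $V_k$ handles the general boundary case.
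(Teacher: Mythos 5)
Your first step is fine: additivity of the conformal grading on $L_{\widehat{\sl_n}}(l,\bar\la)\otimes M^{(l,n)}(\bar\la)$ plus the fact that $V\cong V_{A_{n-1}^{\times l}}$ has integer weights with $V_0=\C\mathbf 1$ gives $h_M\geq 1-h_{\bar\la}$, and your Cauchy--Schwarz refinement does handle the saturated cases such as $\bar\la=l\omega_a$. But the argument does not close. The regime you yourself flag --- $h_{\bar\la}\in\Z_{\geq 1}$ with $|\bar\la|^2/(2l)<h_{\bar\la}$, i.e. $n|\bar\la|^2<2l(\bar\la,\bar\rho)$ --- is genuinely non-empty and unbounded: already $\bar\la=\omega_a$ satisfies the strict inequality for every $l\geq 2$, and by varying $n,l,a$ the value $h_{\bar\la}$ runs through arbitrarily large integers. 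For these cases you offer only ``an analogous (but more elaborate) case analysis of the diagonal-$\sl_n$ content of $V_k$,'' with no uniform mechanism; since $k=h_{\bar\la}$ is unbounded, this is a sketch of an infinite computation, not a proof. There is also a logical slip in the proposed reduction: to conclude $h_M>0$ you must exclude $\widehat{\sl_n}$-singular vectors of weight $\bar\la$ at \emph{every} $V$-degree $\leq h_{\bar\la}$ (a lower-degree singular vector would give $h_M<0$), not merely at degree exactly $h_{\bar\la}$; your $h_{\bar\la}=1$ check happens to cover this because the degree is a positive integer, but for $h_{\bar\la}=k\geq 2$ inspecting $V_k$ alone is insufficient.

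The paper avoids the lattice realization entirely and disposes of all cases at once by a Lie-theoretic inequality. Writing $\la=l\Lambda_0+\bar\la-m\delta\in P^n_+$, Proposition 11.4 of \cite{K} gives $(l\Lambda_0+\rho,l\Lambda_0+\rho)-(\la+\rho,\la+\rho)\geq 0$ with equality only for $\la=l\Lambda_0$, i.e. $(\la+2\rho,\la)<0$ for $\la\neq l\Lambda_0$; and by Corollary 12.8 of \cite{K} the lowest $\omega''$-weight of $M^{(l,n)}(\bar\la)$ is exactly $a(\bar\la)=-\tfrac{(\la+2\rho,\la)}{2(l+n)}=m-h_{\bar\la}$, hence positive. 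This single affine-Weyl-invariant norm estimate is precisely the uniform statement your lattice bounds (integrality and $m\geq|\bar\la|^2/(2l)$) fail to recover in the problematic regime, so the missing ingredient in your approach is essentially Kac's inequality itself; without it, or some substitute of comparable strength, the boundary cases remain open.
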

\begin{proof} For $\la\in P^{n}_{+}$, by Proposition 11.4 in \cite{K}, we have
$$
(l\Lambda_{0}+\rho,l\Lambda_{0}+\rho)-(\la+\rho,\la+\rho)\geq 0,
$$
 and the equality holds if and only if $\la=l\Lambda_{0}$, where  $(\Lambda_{0},\Lambda_{0})=0$, $(\rho,\al_{i})=1, i=0,1,\cdots,n-1, (\rho, \Lambda_{0})=0$ (see $\S 6.2$ in \cite{K}). Then we have for $\la\in P^{n}_{+}$ such that  $\la< l\Lambda_{0}$,
$$(\la+2\rho, \la)<0.
$$
Let $\omega'$ be the conformal vector of $L_{\widehat{\sl_{n}}}(l,0)$, and 
$\omega''=\omega-\omega'$  the conformal vector of $C_{V}(U)$. 
For $\la\in P^{n}_{+}$, $\la\neq l\Lambda_{0}$, denote 
$$M^{(l,n)}(\bar{\lambda})=\bigoplus_{\substack{m\in {\mathbb Q}\\ m\geq a(\bar{\lambda})}}M^{(l,n)}(\bar{\lambda})_{(m)}$$ such that $$\omega''_{1}(M^{(l,n)}(\bar{\lambda})_{(m)})=m{\rm id}|_{M^{(l,n)}(\bar{\lambda})_{(m)}}, $$
where $a(\bar{\lambda})$ is a rational number. Then by Corollary 12.8 in \cite{K} we have
$$
a(\bar{\la})=-\frac{(\la+2\rho, \la)}{2(l+n)}>0.
$$ 
\end{proof}

Let
$$
\widetilde{A}_{l-1}^{\times n}=\bigoplus_{i=1}^{l-1}\bigoplus_{j=1}^{n}\Z\be^{ij}
$$
be an even lattice such that  for  $1\leq i,p\leq l-1$, $1\leq j,q\leq n$,
$$
(\be^{ij},\be^{pq})=0, \ {\rm if} \ j\neq q,
$$
$$
(\be^{ij},\be^{pj})=\left\{
\begin{array}{ll}
 2, & \ {\rm if}  \ p=i,\\
-1, & \ {\rm if} \ i=p-1 \ {\rm or} \ i=p+1,\\
0  & \ {\rm otherwise.}
\end{array}
\right.
$$
Set
$$
\widetilde{N}_{l}^{n}=\bigoplus_{i=1}^{l-1}\bigoplus_{j=1}^{n-1}\Z(\be^{ij}-\be^{i,j+1}).
$$
Then
\begin{equation}\label{para1}
V_{\widetilde{N}_{l}^n}=\{v\in V_{\widetilde{A}_{l-1}^{\times n}}| \sum\limits_{j=1}^{n}\be^{ij}(m)v=0, 1\leq i\leq l-1, m\geq 0\}.
\end{equation}
For $1\leq i\leq l-1$, let
$$
\widetilde{\omega}^{i}=\frac{1}{n+2}\sum\limits_{1\leq p<q\leq n}[\frac{1}{2n}(\be^{ip}-\be^{iq})(-1)^{2}{\bf 1}\\
 +(e^{\be^{ip}-\be^{iq}}+e^{-\be^{ip}+\be^{iq}})],
$$
$$
\begin{array}{ll}
\widetilde{W}^{3,i}=& \sum\limits_{1\leq p,q,r\leq n}(\be^{ip}-\be^{iq})(-1)^2(\be^{ip}-\be^{ir})(-1){\bf 1}\\
& -3n\sum\limits_{1\leq q,r\leq n,q\neq r}[\sum\limits_{1\leq p\leq n,p\neq q}(\be^{ip}-\be^{iq})(-1)+\sum\limits_{1\leq p\leq n,p\neq r}(\be^{ip}-\be^{ir}))(-1)]e^{\be^{iq}-\be^{ir}}.
\end{array}
$$

 The following lemma follows  from \cite{DLY2}, \cite{DLWY} and \cite{DW1}.
\begin{lem}\label{DLWY} (1) For every  $i$ such that $1\leq i\leq l-1$, $\widetilde{W}^{3,i}$ generates a simple vertex operator algebra isomorphic to the parafermion vertex operator algebra $K(\frak{sl}_{2},n)$ with $\widetilde{\omega}^{i}$ as its Virasoro vector.

(2)  The vertex operator algebra generated by $\widetilde{W}^{3,i}$, $1\leq i\leq l-1$ is isomorphic to the parafermion vertex operator algebra $K(\frak{sl}_{l},n)$.
\end{lem}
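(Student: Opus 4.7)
The plan is to reduce both assertions to known results from \cite{DLY2}, \cite{DLWY}, and \cite{DW1} by identifying the vectors $\widetilde{\omega}^i$ and $\widetilde{W}^{3,i}$ with standard generators of parafermion subalgebras sitting inside a suitable copy of $L_{\widehat{\frak{sl}_2}}(n,0)$, respectively $L_{\widehat{\frak{sl}_l}}(n,0)$, inside $V_{\widetilde{A}_{l-1}^{\times n}}$.

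First, fix $i \in \{1,\dots,l-1\}$ and examine the $n$ vectors $\beta^{i1},\dots,\beta^{in}$. From the bilinear form on $\widetilde{A}_{l-1}^{\times n}$ these are mutually orthogonal of norm $2$, so together they span a sublattice isometric to the orthogonal sum of $n$ copies of the root lattice $A_1$. The associated lattice vertex operator algebra is $V_{A_1}^{\otimes n} \cong L_{\widehat{\frak{sl}_2}}(1,0)^{\otimes n}$, and it sits naturally inside $V_{\widetilde{A}_{l-1}^{\times n}}$. The diagonal $\frak{sl}_2$-triple with Chevalley generators $E^i := \sum_{j=1}^{n} e^{\beta^{ij}}$, $F^i := \sum_{j=1}^{n} e^{-\beta^{ij}}$ and $H^i := \sum_{j=1}^{n} \beta^{ij}(-1)\mathbf{1}$ generates a copy of the simple affine vertex operator algebra $L_{\widehat{\frak{sl}_2}}(n,0)$. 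Its Heisenberg subalgebra is generated by $H^i$, and its parafermion $K(\frak{sl}_2,n)$ is by definition the commutant of that Heisenberg.

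Next I would match $\widetilde{\omega}^i$ with the Goddard--Kent--Olive coset Virasoro vector $\omega_{\mathrm{Sug}} - \omega_{\mathrm{Heis}}$ of $K(\frak{sl}_2,n)$. The central charge is $\frac{3n}{n+2} - 1 = \frac{2(n-1)}{n+2}$, matching the analogue of the central charge computed earlier for $'{\omega}^i$. A short check using $(E^i)_{-1} F^i$ and the explicit Heisenberg Virasoro (as already done in the derivation leading to \eqref{eq3.1}, with $l$ and $n$ swapped) shows that $\widetilde{\omega}^i$ equals this coset Virasoro vector. Then the explicit weight-three element $\widetilde{W}^{3,i}$ will be recognized as (a scalar multiple of) the generator $W^3$ constructed in \cite{DLY2} and \cite{DLWY} for the $\frak{sl}_2$ parafermion at level $n$, via the columns $\{\beta^{ij}\}_{j=1}^{n}$ playing the role of the level-$n$ root realization. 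Invoking the generation result of \cite{DLWY} (namely that $\widetilde{\omega}^i$ together with $\widetilde{W}^{3,i}$ generates the whole $K(\frak{sl}_2,n)$) and its simplicity then yields part (1).

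For part (2), I would observe that the full datum $\{\beta^{ij}\}_{1\le i\le l-1,\,1\le j\le n}$ now realizes $L_{\widehat{\frak{sl}_l}}(1,0)^{\otimes n}$, with the diagonal embedding $\frak{sl}_l \hookrightarrow \frak{sl}_l^{\oplus n}$ giving a copy of $L_{\widehat{\frak{sl}_l}}(n,0)$ whose Cartan subalgebra is spanned by the $H^i$'s. The parafermion $K(\frak{sl}_l,n)$ is the commutant of this Cartan Heisenberg, and the elements $\widetilde{W}^{3,i}$ are precisely the weight-three parafermion generators associated to the simple roots of $\frak{sl}_l$ in the construction of \cite{DW1}. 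The main theorem of \cite{DW1} shows that $K(\frak{sl}_l,n)$ is generated by the Virasoro vector together with these simple-root $W^3$ elements, so the subalgebra generated by the $\widetilde{W}^{3,i}$, $1\le i \le l-1$, is exactly $K(\frak{sl}_l,n)$.

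The main obstacle in this program is bookkeeping rather than conceptual: one must verify carefully that the specific rational coefficients appearing in the definitions of $\widetilde{\omega}^i$ and $\widetilde{W}^{3,i}$ coincide, up to a nonzero scalar, with the normalizations of the parafermion generators in \cite{DLY2,DLWY,DW1}, and that the $n$-fold orthogonal $A_1$ presentation used here really corresponds to the level-$n$ realization of $\widehat{\frak{sl}_2}$ exploited in those references. Once this dictionary is in place, both (1) and (2) reduce immediately to the cited results.
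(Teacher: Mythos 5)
Your proposal is correct and takes essentially the same route as the paper, which gives no independent argument for this lemma but simply asserts that it follows from \cite{DLY2}, \cite{DLWY} and \cite{DW1}; your identification of the row sublattice $\bigoplus_{j=1}^{n}\Z\beta^{ij}\cong A_1^{\oplus n}$ carrying a diagonal $L_{\widehat{\frak{sl}_2}}(n,0)$ (resp.\ the diagonal $L_{\widehat{\frak{sl}_l}}(n,0)$ inside $L_{\widehat{\frak{sl}_l}}(1,0)^{\otimes n}$), of $\widetilde{\omega}^{i}$ with the coset Virasoro vector of central charge $\frac{2(n-1)}{n+2}$, and of $\widetilde{W}^{3,i}$ with the standard weight-three parafermion generators is precisely the dictionary those citations presuppose. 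Nothing beyond that bookkeeping is required.
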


For $1\leq i\leq l-1$, set
$$
\begin{array}{ll}
\omega^{i}=& \frac{1}{n+2}\sum\limits_{1\leq p\leq q\leq n-1}[\frac{1}{2n}(\al^{pi}-\al^{p,i+1}+\al^{p+1,i}-\al^{p+1,i+1}+\cdots+\al^{qi}-\al^{q,i+1})(-1)^{2}{\bf 1}\\
& +(-1)^{p-q+1}(e^{\al^{pi}-\al^{p,i+1}+\cdots+\al^{qi}-\al^{q,i+1}}+e^{-(\al^{pi}-\al^{p,i+1}+\cdots+\al^{qi}-\al^{q,i+1})})],
\end{array}
$$
$$
\begin{array}{ll}
W^{3,i}=-&\sum\limits_{p=1}^{n}[\sum\limits_{q=1}^{p-1}(\al^{qi}-\al^{q,i+1}+\al^{q+1,i}-\al^{q+1,i+1}+\cdots+\al^{p-1,i}-\al^{p-1,i+1})(-1)^2\\
& +\sum\limits_{q=p}^{n-1}(\al^{pi}-\al^{p,i+1}+\cdots+\al^{qi}-\al^{qi+1})(-1)^2][-\sum\limits_{k=1}^{p-1}k(\al^{ki}-\al^{k,i=1})(-1){\bf 1}\\
&+\sum\limits_{k=p}^{n-1}(n-k)(\al^{ki}-\al^{kj}(-1){\bf 1}]+3n\sum\limits_{q=1}^{n}\sum\limits_{r=1}^{q-1}(-1)^{q-r}[\sum\limits_{k=1}^{q-1}k(\al^{ki}-\al^{k,i+1})(-1)\\
& -\sum\limits_{k=q}^{n-1}(n-k)(\al^{ki}-\al^{k,i+1})(-1)+\sum\limits_{k=1}^{r-1}k(\al^{ki}-\al^{k,i+1})(-1)\\
&-\sum\limits_{k=r}^{n-1}(n-k)(\al^{ki}-\al^{k,i+1})(-1)]e^{-\sum\limits_{k=r}^{q-1}(\al^{ki}-\al^{k,i+1})}\\
&+3n\sum\limits_{q=1}^{n}\sum\limits_{r=q+1}^{n}(-1)^{q-r}[\sum\limits_{k=1}^{q-1}k(\al^{ki}-\al^{k,i+1})(-1)
-\sum\limits_{k=q}^{n-1}(n-k)(\al^{ki}-\al^{k,i+1})(-1)\\
& +\sum\limits_{k=1}^{r-1}k(\al^{ki}-\al^{k,i+1})(-1)
-\sum\limits_{k=r}^{n-1}(n-k)(\al^{ki}-\al^{k,i+1})(-1)]e^{\sum\limits_{k=q}^{r-1}(\al^{ki}-\al^{k,i+1})}.
\end{array}
$$

It is known that for $l=2$, $C_{V}(U)$ is isomorphic to the parafermion vertex operator algebra $K(\frak{sl}_{2},n)$ (see \cite{CGT} and the introduction in \cite{DLY1}).
  The following lemma  gives a concrete description of the conformal vector and the weight 3 generator  of $C_{V}(U)$  for $l=2$.
\begin{lem}\label{de-pa3}
Let $l=2$. Then
the vertex operator algebra $C_{V}(U)$ is generated by $W^{3,1}$, and $\omega^{1}$ is the conformal vector of $C_{V}(U)$.
\end{lem}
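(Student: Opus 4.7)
The strategy is to build a vertex operator algebra isomorphism $\psi: K(\frak{sl}_2, n) \to C_V(U)$ that sends the generating pair $(\widetilde{\omega}^1, \widetilde{W}^{3,1})$ of $K(\frak{sl}_2, n)$ from Lemma~\ref{DLWY} to $(\omega^1, c\, W^{3,1})$ for some nonzero constant $c$; then both conclusions of the lemma follow at once. Throughout, we use the cited result from \cite{CGT} that $C_V(U) \cong K(\frak{sl}_2, n)$ for $l = 2$.

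First I would verify $\omega^1, W^{3,1} \in C_V(U)$. Both manifestly lie in $V_{N_n^2}$, since every lattice vector appearing in either formula is of the form $\sum_{k=p}^{q}(\alpha^{k1}-\alpha^{k2}) \in N_n^2$. By the description of $C_V(U)$ in the paragraph preceding \eqref{lattice1}, it then suffices to verify annihilation by the nonnegative modes of $e^i$ and $e^{-i}$ for $1 \le i \le n-1$. This follows from a direct Jacobi identity computation using the formula for $e^i_{-1} e^{-i}$ displayed just before \eqref{eq3.1}, together with the orthogonality relation $(\alpha^{i1}+\alpha^{i2},\, \alpha^{p1}-\alpha^{p2}) = 0$.

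Next I would construct $\psi$ in two stages. The lattice correspondence $\alpha^{k1}-\alpha^{k2} \leftrightarrow \beta^{1k}-\beta^{1,k+1}$ induces an isomorphism $V_{N_n^2} \cong V_{\widetilde{N}_2^n}$ of lattice vertex operator algebras, since both lattices carry the Gram matrix $2 A_{n-1}$. I would then modify this isomorphism by a sign $2$-cochain on the sublattice so that it simultaneously sends $\omega^1 \mapsto \widetilde{\omega}^1$ and $W^{3,1} \mapsto c\,\widetilde{W}^{3,1}$. Restricted to $C_V(U) \subseteq V_{N_n^2}$, the image then lies in the subalgebra of $V_{\widetilde{N}_2^n}$ generated by $\widetilde{\omega}^1$ and $\widetilde{W}^{3,1}$, which by Lemma~\ref{DLWY} equals $K(\frak{sl}_2, n)$. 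Comparing graded characters (identical by the cited isomorphism) forces this restriction to be an isomorphism $C_V(U) \to K(\frak{sl}_2, n)$, from which both assertions of the lemma follow.

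The main obstacle is the construction of the sign twist in the last stage: one must verify that the alternating signs $(-1)^{p-q+1}$ in $\omega^1$ and the more intricate coefficient pattern in $W^{3,1}$ are simultaneously governed by a single $2$-cochain on the sublattice. This reduces to careful bookkeeping of the bimultiplicative $2$-cocycles on the ambient lattices $A_{n-1}^{\times 2}$ and $\widetilde{A}_1^{\times n}$ when restricted to $N_n^2$ and $\widetilde{N}_2^n$ respectively, and then checking term-by-term that the two cochain modifications required to match $\omega^1$ and $W^{3,1}$ are the same.
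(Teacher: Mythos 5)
The central step of your construction is unsupported, and as stated it is circular. Your isomorphism $\psi\colon V_{N_n^2}\rightarrow V_{\widetilde{N}_2^n}$ is built purely from lattice data; it knows nothing about $U=L_{\widehat{\sl_n}}(2,0)$ or about how $C_V(U)$ sits inside $V_{N_n^2}$. Knowing that $\psi(\omega^1)=\widetilde{\omega}^1$ and $\psi(W^{3,1})=c\,\widetilde{W}^{3,1}$ only tells you that $\psi$ maps the subalgebra \emph{generated by} $\omega^1$ and $W^{3,1}$ into $K(\sl_2,n)$; it gives no control over $\psi(C_V(U))$ unless you already know that $\omega^1$ and $W^{3,1}$ generate $C_V(U)$ --- which is exactly the assertion of the lemma. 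The character comparison with the cited abstract isomorphism $C_V(U)\cong K(\sl_2,n)$ cannot rescue this: it would only upgrade a containment $\psi(C_V(U))\subseteq K(\sl_2,n)$ to an equality, and that containment is the missing (and essential) ingredient. The way to obtain it is to characterize $C_V(U)$ \emph{intrinsically inside} $V_{N_n^2}$: since $e^i_{-1}e^{-i}\in U$ and, restricted to $V_{N_n^2}$, its nonnegative modes coincide with those of $(l+2)\,{}'\omega^i$ from \eqref{eq3.1}, $C_V(U)$ is the commutant in $V_{N_n^2}$ of the subalgebra generated by the Virasoro vectors ${}'\omega^i$. One then checks that the (sign-twisted) lattice isomorphism carries these ${}'\omega^i$ precisely to the generators $u^{ii}$ of $C_{V_{\widetilde{A}_1^{\times n}}}(L_{\widehat{\sl_2}}(n,0))$ from \cite{LS}, and invokes \cite{LY} (Theorem 4.2), which says that the commutant of that subalgebra in $V_{\widetilde{N}_2^n}$ is exactly $K(\sl_2,n)$, generated by $\widetilde{\omega}^1$ and $\widetilde{W}^{3,1}$ as in Lemma~\ref{DLWY}. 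This is what transports ``commutant to commutant'' and is the mechanism your proposal lacks; with it, the appeal to \cite{CGT} and to character counting becomes unnecessary.

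Two secondary points. First, the sign-matching you flag as the ``main obstacle'' and leave open is resolved in the paper not by a $2$-cochain twist on the group-algebra part alone, but by composing the base-change isomorphism with the automorphism $\theta$ that lifts $-1$ (it sends $e^{\gamma}\mapsto -e^{-\gamma}$ and negates the Heisenberg generators $(\al^{k1}-\al^{k2})(-1){\bf 1}$); a diagonal sign modification fixing the Heisenberg part cannot produce the required match, so this step of your plan would fail as formulated. Second, your opening verification that $\omega^1, W^{3,1}\in C_V(U)$ by a ``direct Jacobi computation'' is asserted rather than carried out, and commuting with the modes of $e^i_{-1}e^{-i}$ is in any case weaker than commuting with the modes of $e^i$ and $e^{-i}$ separately; in the corrected argument this membership comes out automatically once $C_V(U)$ is identified with the image of $K(\sl_2,n)$.
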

\begin{proof} Consider  the lattice vertex operator algebras $V_{\widetilde{A}_{1}^{\times n}}$ and  $V_{\widetilde{N}_{2}^{n}}$. Note that $V_{\widetilde{A}_{1}^{\times n}}$ is actually ${V_{A_{1}}}^{\otimes n}$.
By  Theorem 4.2 in \cite{LY}    the commutant $C_{V_{\widetilde{A}_{1}^{\times n}}}(L_{\widehat{\frak{sl}_{2}}}(n,0))$ of $L_{\widehat{\frak{sl}_{2}}}(n,0)$ in $V_{\widetilde{A}_{1}^{\times n}}$ is contained in $V_{\widetilde{N}_{2}^{n}}$ and the commutant of $C_{V_{\widetilde{A}_{1}^{\times n}}}(L_{\widehat{\frak{sl}_{2}}}(n,0))$ in $V_{\widetilde{N}_{2}^{n}}$ is the parafermion vertex operator  algebra $K(\frak{sl}_{2},n)$ generated by $\widetilde{\omega}^{1}$ and $\widetilde{W}^{3,1}$. Furthermore,
$$
V_{\widetilde{N}_{2}^{n}}=C_{V_{\widetilde{A}_{1}^{\times n}}}(L_{\widehat{\frak{sl}_{2}}}(n,0))\otimes K(\frak{sl}_{2},n)\bigoplus
(\bigoplus_{\substack{2\leq k_{n-1}\leq n\\ k_{n-1}\in 2\Z}}M(k_{n-1})\otimes W(k_{n-1})),
$$
where $M(k_{n-1}), 2\leq k_{n-1}\leq n$ are irreducible modules of $C_{V_{\widetilde{A}_{1}^{\times n}}}(L_{\widehat{\frak{sl}_{2}}}(n,0))$ and $W(k_{n-1}),2\leq k_{n-1}\leq n$ are irreducible modules of $K(\sl_{2},n)$.
It was proved in \cite{LS} that $C_{V_{\widetilde{A}_{1}^{\times n}}}(L_{\widehat{\frak{sl}_{2}}}(n,0))$ is generated by
\begin{equation}\label{e-lem-1}
u^{ij}=\frac{1}{16}(\be^{1i}-\be^{1,j+1})(-1)^2{\bf 1}-\frac{1}{4}(e^{\be^{1i}-\be^{1,j+1}}+e^{-\be^{1i}+\be^{1,j+1}}), \ 1\leq i\leq j\leq n-1.
\end{equation}
For $1\leq i\leq j\leq n-1$, set
$$
v^{ij}=\frac{1}{16}(\sum\limits_{k=i}^{j}(\al^{k1}-\al^{k2}))(-1)^2{\bf 1}-\frac{1}{4}(e^{\sum\limits_{k=i}^{j}(\al^{k1}-\al^{k2})}+e^{\sum\limits_{k=i}^{j}(-\al^{k1}+\al^{k2})}).
$$

Note that both $N_{n}^{2}=\bigoplus_{i=1}^{n-1}\Z(\al^{i1}-\al^{i2})$ and $\widetilde{N}_{2}^{n}=\bigoplus_{i=1}^{n-1}\Z(\be^{1i}-\be^{1,i+1})$ are  lattice of type $\sqrt{2}A_{n-1}$ with bases $\{\al^{i1}-\al^{i2}\;|\;i=1,2,\cdots,n-1\}$ and $\{\be^{1i}-\be^{1,i+1}\;|\;1=1,2,\cdots,n-1\}$ respectively. It follows that $V_{N_{n}^2}$ and $V_{\widetilde{N}_{2}^{n}}$ are isomorphic and the vertex operator algebra generated by $v^{ij}$, $1\leq i\leq j\leq n-1$ is isomorphic to the vertex operator algebra generated by $u^{ij}$, $1\leq i\leq j\leq n-1$
with the isomorphism $\sigma$: $v^{ij}\mapsto u^{ij}$, $1\leq i\leq j\leq n-1$. Let $\theta$ be an automorphism of the vertex operator algebra $V_{N_2^{n}}$  defined by
$$
\theta(e^{\al^{k1}-\al^{k2}})=-e^{-\al^{k1}+\al^{k2}}, \ \theta(e^{-\al^{k1}+\al^{k2}})=-e^{\al^{k1}-\al^{k2}}, $$$$
\theta((\al^{k1}-\al^{k2})(-1){\bf 1})=(-\al^{k1}+\al^{k2})(-1){\bf 1}, 1\leq k\leq n-1.
$$
Then
\begin{equation}\label{addition1} \theta(\sigma^{-1}(u^{ii}))=\theta(v^{ii})= {'\omega}^{i},  \ i=1,2,\cdots,n-1,
\end{equation} $$
\theta(\sigma^{-1}(\widetilde{\omega}^{1}))=\omega^{1}, \ \theta(\sigma^{-1}(\widetilde{W}^{3,1}))=W^{3,1},
$$
where $'\omega^{i}$ is defined by (\ref{eq3.1}) for $l=2$. Then we see that $C_{V}(U)=\theta(\sigma^{-1}( K(\sl_{2},n)))$ and $C_{V}(U)$ is generated by $\omega^{1}$ and $W^{3,1}$. 
\end{proof}

\vskip 0.3cm
Since $U\cong L_{\widehat{\sl_{n}}}(2,0)$ for $l=2$, it follows from (\ref{eq3.1}) that $'\omega^{i}, 1\leq i\leq n-1$ are in $K(\sl_{n},2)$. By Lemma \ref{DLWY} the vertex operator subalgebra of $U$ generated by $'\omega^{i}, 1\leq i\leq n-1$ is isomorphic to the parafermion vertex operator algebra $K(\sl_{n},2)$. By Proposition 4.6 in \cite{LS} ( also see \cite{JL}), $C_{V_{\widetilde{A}_{1}^{\times n}}}(L_{\widehat{\sl_{2}}}(n,0))$ is
generated by $u^{ii}$, $i=1,2,\cdots,n-1$. Then using the maps $\theta^{-1}$ and $\sigma$, we obtain the following lemma.
\begin{lem}\label{lr-dual1}
$C_{V_{\widetilde{A}_{1}^{\times n}}}(L_{\widehat{\frak{sl}_{2}}}(n,0))$ is isomorphic to the parafermion vertex operator algebra $K(\sl_{n},2)$, and $$
V_{\widetilde{N}_{2}^{n}}=K(\sl_{n},2)\otimes K(\frak{sl}_{2},n)\bigoplus
(\bigoplus_{\substack{2\leq k_{n-1}\leq n \\ k_{n-1}\in 2\Z}}M(k_{n-1})\otimes W(k_{n-1})).
$$
\end{lem}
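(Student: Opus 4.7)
The plan is to exploit the explicit isomorphism $\theta\circ\sigma^{-1}\colon V_{\widetilde{N}_{2}^{n}}\to V_{N_{n}^{2}}$ that was constructed in the proof of Lemma~\ref{de-pa3}, combined with the two matching descriptions of generators recorded in the paragraph immediately preceding the lemma.

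First I would set up the two generator sets. On one side, Proposition 4.6 of \cite{LS} (also \cite{JL}) says that the commutant $C_{V_{\widetilde{A}_{1}^{\times n}}}(L_{\widehat{\sl_{2}}}(n,0))\subseteq V_{\widetilde{N}_{2}^{n}}$ is generated by the vectors $u^{ii}$, $i=1,\ldots,n-1$, defined in \eqref{e-lem-1}. On the other side, specializing \eqref{eq3.1} to $l=2$ puts $'\omega^{i}\in L_{\widehat{\sl_{n}}}(2,0)$; by Lemma~\ref{DLWY} (applied with the roles of $l$ and $n$ interchanged, recognizing $V_{N_{n}^{2}}\cong V_{\sqrt{2}A_{n-1}}$), the vertex operator subalgebra of $U=L_{\widehat{\sl_{n}}}(2,0)$ generated by $'\omega^{i}$, $i=1,\ldots,n-1$, is a copy of the parafermion $K(\sl_{n},2)$.

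Next I would invoke formula \eqref{addition1} from the proof of Lemma~\ref{de-pa3}: $\theta(\sigma^{-1}(u^{ii}))=\theta(v^{ii})={'\omega}^{i}$. Because $\sigma$ is the lattice-induced VOA isomorphism $V_{N_{n}^{2}}\to V_{\widetilde{N}_{2}^{n}}$ (both sides being $V_{\sqrt{2}A_{n-1}}$) and $\theta$ is an automorphism of $V_{N_{n}^{2}}$, the composition $\theta\circ\sigma^{-1}$ is a VOA isomorphism $V_{\widetilde{N}_{2}^{n}}\to V_{N_{n}^{2}}$. Restricting it to the subalgebra generated by the $u^{ii}$ produces a VOA isomorphism onto the subalgebra generated by the $'\omega^{i}$, which is precisely $K(\sl_{n},2)$. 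This gives the first assertion $C_{V_{\widetilde{A}_{1}^{\times n}}}(L_{\widehat{\sl_{2}}}(n,0))\cong K(\sl_{n},2)$.

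For the decomposition, I would simply import the formula
\[
V_{\widetilde{N}_{2}^{n}}=C_{V_{\widetilde{A}_{1}^{\times n}}}(L_{\widehat{\sl_{2}}}(n,0))\otimes K(\sl_{2},n)\,\oplus\,\bigoplus_{\substack{2\leq k_{n-1}\leq n\\ k_{n-1}\in 2\Z}}M(k_{n-1})\otimes W(k_{n-1})
\]
already displayed in the proof of Lemma~\ref{de-pa3} from \cite{LY,LS}, and substitute the just-proved isomorphism into the first tensor factor to obtain the stated form. The only real obstacle is bookkeeping — one must keep careful track of the identifications $N_{n}^{2}\cong\widetilde{N}_{2}^{n}\cong\sqrt{2}A_{n-1}$ and verify that the commutant and parafermion subalgebras actually correspond under $\theta\circ\sigma^{-1}$ on the nose — but this was essentially already done when \eqref{addition1} was established, so no new input is required.
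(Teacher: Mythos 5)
Your proposal is correct and follows essentially the same route as the paper: the paper's proof likewise combines the generation of the commutant by the $u^{ii}$ (Prop.~4.6 of \cite{LS}, \cite{JL}) with the fact that the $'\omega^{i}$ generate a copy of $K(\sl_n,2)$ inside $L_{\widehat{\sl_n}}(2,0)$ (Lemma \ref{DLWY} with level and rank interchanged), transfers one onto the other via the maps $\theta$ and $\sigma$ through \eqref{addition1}, and then substitutes into the decomposition of $V_{\widetilde{N}_{2}^{n}}$ already recorded in the proof of Lemma \ref{de-pa3}. No essential difference or gap to report.
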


\vskip0.3cm
For $n,l\in\Z_{\geq 3}$, let $W(0)$ be the vertex operator subalgebra of $V_{N_{n}^{l}}$ generated by $\omega^{i}$ and $W^{3,i}$, $1\leq i\leq l-1$ and $\widetilde{W}(0)$ the vertex operator subalgebra of $V_{\widetilde{N}_{l}^{n}}$ generated by $\widetilde{\omega}^{i}$ and $\widetilde{W}^{3,i}$, $1\leq i\leq l-1$. Then we have the following lemma.
\begin{lem}\label{de-pa2}
(1) There is a vertex operator algebra  isomorphism $\tau$ from $V_{{N}_{n}^{l}}$ to $V_{\widetilde{N}_{l}^{n}}$ such that $\tau(W(0))=\widetilde{W}(0)$.

 (2) Both $W(0)$ and $\widetilde{W}(0)$ are isomorphic to the parafermion vertex operator algebra $K(\sl_{l},n)$.
\end{lem}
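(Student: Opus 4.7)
The plan is to mirror the $l=2$ construction in Lemma~\ref{de-pa3}: build a vertex operator algebra isomorphism $\tau:V_{N_n^l}\to V_{\widetilde{N}_l^n}$ as the composition of a lattice isomorphism with a sign-twist automorphism, show that $\tau$ carries the generators of $W(0)$ to those of $\widetilde{W}(0)$, and then invoke Lemma~\ref{DLWY}(2) to identify the common image with $K(\sl_l,n)$.

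First I define a lattice map $\phi:\widetilde{N}_l^n\to N_n^l$ by $\phi(\beta^{ij}-\beta^{i,j+1})=\alpha^{ji}-\alpha^{j,i+1}$, extended $\Z$-linearly. A direct Gram-matrix computation on these natural bases (setting $b_{ij}=\beta^{ij}-\beta^{i,j+1}$ and $a_{ji}=\alpha^{ji}-\alpha^{j,i+1}$) shows that on both sides the basis vectors have self-pairing $4$, pairing $-2$ with each neighbor in either index, pairing $+1$ on the two diagonal neighbors, and $0$ otherwise; thus $\phi$ is an isometry and, being between even lattices of the same rank, lifts to a vertex operator algebra isomorphism $\widehat{\phi}:V_{\widetilde{N}_l^n}\to V_{N_n^l}$.

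The lift $\widehat{\phi}$ does not by itself carry $\widetilde{W}(0)$ to $W(0)$, because the formula for $\omega^i$ contains signs $(-1)^{p-q+1}$ that are absent from $\widetilde{\omega}^i$, with an analogous discrepancy between the $(-1)^{q-r}$ factors in $W^{3,i}$ and the sign-free corresponding terms of $\widetilde{W}^{3,i}$. I correct this by composing with the sign-twist automorphism of $V_{N_n^l}$ determined by the character $\chi:N_n^l\to\{\pm1\}$ with $\chi(a_{ij})=-1$ on every basis element, so that $\chi(\sum_{k=p}^{q}a_{ki})=(-1)^{q-p+1}$. Setting $\tau=\widehat{\phi}^{-1}\circ(\text{sign-twist by }\chi)$ yields a vertex operator algebra isomorphism $V_{N_n^l}\to V_{\widetilde{N}_l^n}$. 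The crucial identity $\phi^{-1}(\sum_{k=p}^{q}(\alpha^{ki}-\alpha^{k,i+1}))=\beta^{ip}-\beta^{i,q+1}$, together with the reindexing $q'=q+1$, converts the sum $\sum_{1\le p\le q\le n-1}$ appearing in $\omega^i$ into the sum $\sum_{1\le p<q'\le n}$ appearing in $\widetilde{\omega}^i$, while the sign $(-1)^{p-q+1}$ in $\omega^i$ cancels against $\chi=(-1)^{q-p+1}$, producing $\widetilde{\omega}^i$ term by term. A parallel bookkeeping, using $\chi(\beta^{iq}-\beta^{ir})=(-1)^{|q-r|}$ to absorb the $(-1)^{q-r}$ in $W^{3,i}$, matches each exponential term of $W^{3,i}$ against the corresponding term of $\widetilde{W}^{3,i}$, and the Heisenberg parts agree because $\phi$ is an isometry and $\chi$ acts trivially on $M(1)$.

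Since $W(0)$ is generated by $\{\omega^i, W^{3,i}:1\le i\le l-1\}$ and $\widetilde{W}(0)$ by $\{\widetilde{\omega}^i,\widetilde{W}^{3,i}:1\le i\le l-1\}$, the verifications $\tau(\omega^i)=\widetilde{\omega}^i$ and $\tau(W^{3,i})=\widetilde{W}^{3,i}$ yield $\tau(W(0))=\widetilde{W}(0)$, establishing (1). Part (2) then follows by combining (1) with Lemma~\ref{DLWY}(2), which identifies $\widetilde{W}(0)$ with the parafermion algebra $K(\sl_l,n)$. The main obstacle will be the sign bookkeeping for $W^{3,i}$, whose defining expression is considerably more intricate than that of $\omega^i$; the up-front choice of the character $\chi$ and the transposition $\phi$ is what makes this verification mechanical rather than ad hoc.
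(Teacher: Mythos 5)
Your construction is essentially the paper's: the paper likewise transposes the two indices (its $\tau^1$ sends $(\al^{ip}-\al^{i,p+1})(-1)\1$ to $(\be^{pi}-\be^{p,i+1})(-1)\1$, so that $\sum_{k=p}^{q}(\al^{ki}-\al^{k,i+1})$ telescopes to $\be^{ip}-\be^{i,q+1}$) and then composes with an order-two automorphism to repair the signs, after which Lemma~\ref{DLWY} gives part (2) exactly as you argue. The only real difference is the choice of sign-repairing automorphism: the paper uses $\theta$, which sends $e^{\pm(\al^{ip}-\al^{i,p+1})}$ to $-e^{\mp(\al^{ip}-\al^{i,p+1})}$ and acts by $-1$ on the Heisenberg generators, whereas your twist by the character $\chi$ fixes $M(1)$; this introduces one inaccuracy in your claimed term-by-term identity $\tau(W^{3,i})=\widetilde{W}^{3,i}$, because the purely Heisenberg degree-three parts of $W^{3,i}$ and $\widetilde{W}^{3,i}$ carry opposite overall signs in the displayed formulas ($-\sum$ versus $+\sum$), and an automorphism fixing $M(1)$ cannot alter that relative sign, so your $\tau$ actually yields $\tau(W^{3,i})=-\widetilde{W}^{3,i}$ (the exponential terms acquire the same uniform extra $-1$). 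Since rescaling a generator does not change the subalgebra it generates, $\tau(W(0))=\widetilde{W}(0)$ and both conclusions of the lemma still follow from your map; if you want the exact equality on generators, compose further with the plain lift of $-1$ ($e^{\gamma}\mapsto e^{-\gamma}$, $h\mapsto -h$), which is precisely how the paper's $\theta$ differs from your $\chi$-twist.
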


\begin{proof} It is easy to check that for $1\leq i,j\leq n-1, 1\leq p,q\leq l-1$,
$$
(\al^{ip}-\al^{i,p+1}, \al^{jq}-\al^{j,q+1})=(\be^{pi}-\be^{p,i+1}, \be^{qj}-\be^{q,j+1}).
$$
Then there is a vertex operator algebra isomorphism $\tau^1$ from $V_{{N}_{n}^{l}}$ to $V_{\widetilde{N}_{l}^{n}}$ such that
$$
\tau^1((\al^{ip}-\al^{i,p+1})(-1){\bf 1})=(\be^{pi}-\be^{p,i+1})(-1){\bf 1}, \ 1\leq i\leq n-1,\ 1\leq p\leq l-1
$$
and
$$
\tau^1(e^{\al^{ip}-\al^{i,p+1}})=e^{\be^{pi}-\be^{p,i+1}}, \ 1\leq i\leq n-1,\ 1\leq p\leq l-1.
$$
Let $\theta$ be an automorphism of $V_{N_{n}^{l}}$ defined by
$$
\theta(e^{\al^{ip}-\al^{i,p+1}})=-e^{-\al^{ip}+\al^{i,p+1}}, \ \theta(e^{-\al^{ip}+\al^{i,p+1}})=-e^{\al^{ip}-\al^{i,p+1}}, $$$$
\theta((\al^{ip}-\al^{i,p+1})(-1){\bf 1})=(-\al^{ip}+\al^{i,p+1})(-1){\bf 1}, 1\leq i\leq n-1, \ 1\leq p\leq l-1.
$$
Then $\tau=\tau^1\theta$ is the desired vertex operator algebra isomorphism from $V_{{N}_{n}^{l}}$ to $V_{\widetilde{N}_{l}^{n}}$. By Lemma \ref{DLWY}, $\widetilde{W}(0)$ is isomorphic to the parafermion vertex operator algebra $K(\sl_{l},n)$. Since $\widetilde{W}(0)=\tau(W(0))$, it follows that $W(0)$ is also isomorphic to $K(\sl_{l},n)$. \end{proof}

\vskip 0.3cm
We are now in a position to state the following level-rank duality which characterizes relations between tensor decomposition and parafermion vertex operator algebras.
\begin{theorem}\label{lr-dual2}
For $n,l\in\Z_{\geq 2}$, $V=L_{\widehat{\sl_{n}}}(1,0)^{\otimes l}$ and $U=L_{\widehat{\sl_{n}}}(l,0)$, we have
$$
V=L_{\widehat{\sl_{n}}}(l,0)\otimes K(\sl_{l},n)\bigoplus(\bigoplus_{\la\in P_{+}^n,\la\neq l\Lambda_{0}}L_{\widehat{\sl_{n}}}(l,\bar{\lambda})\otimes M^{(l,n)}(\bar{\lambda})),
$$
$$
V_{{N}_{n}^{l}}=K(\sl_{n},l)\otimes K(\frak{sl}_{l},n)\bigoplus(\bigoplus_{\la\in P^n_{+},\la\neq l\Lambda_{0}}W^{(n,l)}(\bar{\lambda})\otimes M^{(l,n)}(\bar{\lambda})),
$$
where if $M^{(l,n)}(\bar{\lambda})\neq 0$, then $M^{(l,n)}(\bar{\lambda})$ as a $K(\sl_l,n)$-module is isomorphic to $W^{(n,l)}(\bar{\mu})$  for some $\mu\in P^{l}_{+}$. $P(n\Lambda_0)$ is the set of weights of $L_{\widehat{\sl_{l}}}(n,0)$ and
$$P^l_{+}=\{\la\in P(n\Lambda_{0})\;|\; \la+\delta\notin P(n\Lambda_{0}), \ (\la,\be_{i})\in\Z_{\geq 0}, i=0,1,\cdots,l-1\},$$
 where $\be_{0},\cdots,\be_{l-1}$ are the simple roots of the affine Lie algebra $\widehat{\sl_{l}}$.
\end{theorem}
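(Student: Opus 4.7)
The plan is to identify the commutant $C_V(U)$ with the parafermion vertex operator algebra $K(\sl_l, n)$ by showing $C_V(U) = W(0)$, the subalgebra of $V_{N_n^l}$ generated by $\omega^i$ and $W^{3, i}$ for $1 \leq i \leq l - 1$; once this is established, the two decompositions claimed in the theorem follow immediately by substituting into \eqref{de2} and \eqref{de3}.

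To prove the forward inclusion $W(0) \subseteq C_V(U)$, I would observe that each generator $\omega^i$ or $W^{3, i}$ of $W(0)$ involves only the lattice elements $\alpha^{pi} - \alpha^{p, i+1}$, $1 \leq p \leq n - 1$, and hence lies in the rank-two sub-tensor product $V_{A_{n-1}^{(i)}} \otimes V_{A_{n-1}^{(i+1)}}$. Writing each generator $e^{\pm p} = \sum_{j=1}^l e^{\pm \alpha^{pj}}$ of $U$ as $(e^{\pm \alpha^{pi}} + e^{\pm \alpha^{p, i+1}}) + \sum_{j \neq i, i+1} e^{\pm \alpha^{pj}}$, the second summand commutes with $\omega^i, W^{3, i}$ because it is supported on tensor factors disjoint from $\{i, i+1\}$. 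The first summand generates the diagonal copy of $L_{\widehat{\sl_n}}(2, 0)$ inside $V_{A_{n-1}^{(i)}} \otimes V_{A_{n-1}^{(i+1)}}$, whose commutant contains $\omega^i$ and $W^{3, i}$ by Lemma \ref{de-pa3} (the $l = 2$ case). Thus $W(0) \subseteq C_V(U)$, and $W(0) \cong K(\sl_l, n)$ by Lemma \ref{de-pa2}(2).

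For the reverse inclusion, I plan to apply the isomorphism $\tau : V_{N_n^l} \xrightarrow{\sim} V_{\widetilde{N}_l^n}$ from Lemma \ref{de-pa2} to \eqref{de3}, obtaining a decomposition of $V_{\widetilde{N}_l^n}$ as a $\tau(K(\sl_n, l)) \otimes \tau(C_V(U))$-module. By the parallel argument with $n$ and $l$ swapped, applied to $\tilde V = L_{\widehat{\sl_l}}(1, 0)^{\otimes n}$ and $\tilde U = L_{\widehat{\sl_l}}(n, 0)$, one also obtains
\[ V_{\widetilde{N}_l^n} = K(\sl_l, n) \otimes C_{\tilde V}(\tilde U) \oplus \bigoplus_{\mu \in P_+^l,\, \mu \neq n\Lambda_0} W^{(n, l)}(\bar{\mu}) \otimes M^{(n, l)}(\bar{\mu}), \]
where $K(\sl_l, n) = \widetilde{W}(0) = \tau(W(0))$ and the multiplicity spaces $M^{(n, l)}(\bar{\mu})$ have strictly positive lowest conformal weight by the swapped version of Lemma \ref{de-pa1}. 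Since $K(\sl_l, n) = \tau(W(0)) \subseteq \tau(C_V(U))$ and the two subalgebras $\tau(K(\sl_n, l))$ and $K(\sl_l, n)$ commute inside $V_{\widetilde{N}_l^n}$, matching the $K(\sl_l, n)$-isotypic components in the two decompositions of $V_{\widetilde{N}_l^n}$ will force $\tau(C_V(U)) = K(\sl_l, n)$, whence $C_V(U) = W(0) \cong K(\sl_l, n)$. The main difficulty will be precisely this matching step: ruling out extra nontrivial irreducible $K(\sl_l, n)$-submodules of $\tau(C_V(U))$ using the positive-weight condition and the explicit lattice structure of $V_{\widetilde{N}_l^n}$.

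Once $C_V(U) \cong K(\sl_l, n)$ is established, substitution into \eqref{de2} and \eqref{de3} yields both decompositions asserted in the theorem. The module isomorphism $M^{(l, n)}(\bar{\lambda}) \cong W^{(n, l)}(\bar{\mu})$ for some $\mu \in P_+^l$ then follows by matching isotypic components across the two decompositions of $V_{\widetilde{N}_l^n}$ above and invoking Lemma \ref{irr-1} for the irreducibility of the relevant parafermion modules.
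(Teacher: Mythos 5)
Your proposal is correct in outline and follows essentially the same route as the paper: the inclusion $W(0)\subseteq C_V(U)$ via the $l=2$ case (Lemma \ref{de-pa3}), the identification $W(0)\cong K(\sl_l,n)$ together with the isomorphism $\tau$ (Lemma \ref{de-pa2}), and the comparison of the $\tau$-image of \eqref{de3} with the mirrored decomposition \eqref{de4}. The matching step you flag as the main difficulty is exactly what the paper carries out with the weight-zero-subspace computation \eqref{de5}: by Lemma \ref{de-pa1} the multiplicity spaces have strictly positive lowest weights, so the kernel of $\omega''_1$ on $V_{N_n^l}$ is precisely $K(\sl_n,l)$, and transporting this under $\tau$ (together with the analogous statement on the other side) pins down both commutants.
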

\begin{proof} By Lemma \ref{lr-dual1}, we may assume that $n,l\geq 3$. Let $W(0)$ and $\widetilde{W}(0)$ be defined as above. By Lemma \ref{de-pa3}, it is easy to see that $\omega^{i},W^{3,i}\in C_{V}(U)$, $1\leq i\leq l-1$.
 Then by Lemma \ref{de-pa2}, $K(\sl_{l},n)\cong W(0) \subseteq C_{V_{{A}_{n-1}^{\times l}}}(L_{\widehat{\sl_{n}}}(l,0))$. By (\ref{de2}) and (\ref{de3}), we have
$$
V=L_{\widehat{\sl_{n}}}(l,0)\otimes C_{V}(U)\bigoplus(\bigoplus_{\la\in P^{n}_{+},\la\neq l\Lambda_{0}}L_{\widehat{\sl_{n}}}(l,\bar{\lambda})\otimes M^{(l,n)}(\bar{\lambda})),
$$
$$
V_{N_{n}^l}=K(\sl_{n},l)\otimes C_{V}(U)\bigoplus(\bigoplus_{\la\in P_{+}^n,\lambda\neq l\Lambda_{0}}W^{(l,n)}(\lambda)\otimes M^{(l,n)}(\bar{\lambda})).
$$
Regarding $V_{N_n^{l}}$ as a $C_{V}(U)$-module, by Lemma \ref{de-pa1},
 \begin{equation}\label{de5}
 V_{N_n^{l}}(0)=\{v\in V_{N_n^{l}}|\omega''_{1}v=0\}=K(\sl_{n},l),
 \end{equation}
where $\omega''$ is the conformal vector of $C_{V}(U)$. Since $K(\sl_{l},n)\cong W(0)$ is a conformal vertex operator subalgebra of $C_{V}(U)$, it follows that regarding $V_{N_n^{l}}$ as a $W(0)$-module, we also have $V_{N_n^{l}}(0)=K(\sl_{n},l)$.

Consider $V_{\widetilde{A}_{l-1}^{\times n}}$ and  $V_{\widetilde{N}_{l}^{n}}$ similarly,  we have $K(\sl_{n},l)\subseteq C_{V_{\widetilde{A}_{l-1}^{\times n}}}(L_{\widehat{\sl_{l}}}(n,0))$ and
$$
 V_{\widetilde{A}_{l-1}^{\times n}}=C_{V_{\widetilde{A}_{l-1}^{\times n}}}(L_{\widehat{\sl_{l}}}(n,0))\otimes L_{\widehat{\sl_{l}}}(n,0)
\bigoplus(\bigoplus_{\mu\in P^l_{+},\mu\neq n\Lambda_{0}}M^{(n,l)}(\bar{\mu})\otimes L_{\widehat{\sl_{l}}}(n,\bar{\mu})),
$$
\begin{equation}\label{de4}
V_{\widetilde{N}_{l}^n}=C_{V_{\widetilde{A}_{l-1}^{\times n}}}(L_{\widehat{\sl_{l}}}(n,0))\otimes \widetilde{W}(0)\bigoplus(\bigoplus_{\mu\in P^l_{+},\mu\neq n\Lambda_{0}}M^{(n,l)}(\bar{\mu})\otimes W^{(n,l)}(\bar{\mu})),
\end{equation}
where  $P(n\Lambda_0)$ is the set of weights of $L_{\widehat{\sl_{l}}}(n,0)$,
and $$P^l_{+}=\{\la\in P(n\Lambda_{0})\;|\; \la+\delta\notin P(n\Lambda_{0}), \ (\la,\be_{i})\in\Z_{\geq 0}, i=0,1,\cdots,l-1\},$$
 $\be_{0},\cdots,\be_{l-1}$ are the simple roots of the affine Lie algebra $\widehat{\sl_{l}}$.

 Recall that $\widetilde{W}(0)\cong K(\sl_{l},n)$.
Regarding $V_{\widetilde{N}_{l}^n}$ as a $ \widetilde{W}(0)$-module, by (\ref{de4}) we have
$$
V_{{\tilde{N}}_l^{n}}(0)=\{v\in V_{\widetilde{N}_{l}^n}| \widetilde{\omega}''_{1}v=0\}\supseteq C_{V_{\widetilde{A}_{l-1}^{\times n}}}(L_{\widehat{\sl_{l}}}(n,0)),
$$
where $\widetilde{\omega}''$ is the Virasoro vector of $\widetilde{W}(0)$ inside $V_{\widetilde{N}_{l}^n}$.
 By Lemma \ref{de-pa2}, $V_{\widetilde{N}_{l}^n}\cong V_{N_{n}^l}$ through the isomorphism $\tau$ and $\tau(W(0))=\widetilde{W}(0)$. By (\ref{de5}), we immediately have
 $$
 C_{V_{\widetilde{A}_{l-1}^{\times n}}}(L_{\widehat{\sl_{l}}}(n,0))=K(\sl_{n},l).$$
Similarly,
$$
 C_{V_{{A}_{n-1}^{\times l}}}(L_{\widehat{\sl_{n}}}(l,0))=C_{V}(U)=K(\sl_{l},n).$$ 
 \end{proof}

 \section{Level-Rank Duality for General Case}
\setcounter{equation}{0}

In this section, we will establish the level-rank duality for general case.
 As in Section 3, let $V$ be the tensor product vertex operator algebra
$$V=L_{\widehat{\frak g}}(l_{1},0)\otimes L_{\widehat{\frak g}}(l_{2},0)\otimes \cdots\otimes L_{\widehat{\frak g}}(l_{m},0)$$
and
$U\cong L_{\widehat{\sl_n}}(l,0)$  the vertex operator subalgebra of $V$ generated by $\sl_{n}(\C)$ which is diagonally imbedded into $V_{1}$, where $\frak{g}=\sl_{n}(\C)$, $l_1,\cdots,l_m\in\Z_{+}$, $l=l_1+\cdots+l_m$, $m\in\Z_{\geq 2}$.

 Let $A_{n-1}^{\times l}$, $\widetilde{A}_{l-1}^{\times n}$, $N_{n}^{l}$, and $\widetilde{N}_l^{n}$ be the same as in Section 3. Denote
 $$
 s_0=0, \ s_j=l_1+l_2+\cdots+l_j, \ 1\leq j\leq m.
 $$

 For  $r=1,2,\cdots,m$, define
 $$
 A_{n}^{\times l,r}=\bigoplus_{i=1}^{n-1}\bigoplus_{j=s_{r-1}+1}^{s_{r}}\Z\al^{ij},$$
 
 For $ r=1,2,\cdots,m$ such that $s_r\geq 2$, define
 $$ \  \widetilde{A}_{l,r}^{\times n}=\bigoplus_{i=s_{r-1}+1}^{s_r-1}\bigoplus_{j=1}^{n}\Z\be^{ij},
 $$
 $$
 {N}_n^{l,r}=\bigoplus_{i=1}^{n-1}\bigoplus_{j=s_{r-1}+1}^{s_r-1}\Z(\al^{ij}-\al^{i,j+1}), \ \
 \widetilde{N}_{l,r}^{n}=\bigoplus_{i=s_{r-1}+1}^{s_r-1}\bigoplus_{j=1}^{n-1}\Z(\be^{ij}-\be^{i,j+1}),
 $$
 $$
 {K}_{r}=\bigoplus_{i=s_{r-1}+1}^{s_r-1}\Z(\be^{i1}+\cdots+\be^{in}).
 $$

 Recall that the simple vertex operator algebra $L_{\widehat{\sl_n}}(l,0)$ is naturally imbedded into the lattice vertex operator algebra $V_{A_{n-1}^{\times l}}$ and $L_{\widehat{\sl_l}}(n,0)$ is naturally imbedded into $V_{A_{l-1}^{\times n}}$.  We now imbed $L_{\widehat{\sl_n}}(l_k,0)$ into the vertex operator algebra $V_{A_{n}^{\times l,k}}$.  For $l_k\geq 2$, we  also imbed $L_{\widehat{\sl_{l_k}}}(n,0)$ into $V_{\widetilde{A}_{l,k}^{\times n}}$.

 Denote
 $$\frak{h}_l=\bigoplus_{i=1}^{l-1}\C(\be^{i1}+\cdots+\be^{in}).
 $$
 Then $\frak{h}_l$ is  a Cartan  subalgebra of $L_{\widehat{\sl_{l}}}(n,0)$. Set
 $$
 \frak{h}_{\underline{\ell}}=\{\be\in\frak{h}_l \; | \; (\be, \be^{i1}+\cdots+\be^{in})=0, \ {\rm for} \  s_{k}\geq 2,  s_{k-1}+1\leq i\leq s_k-1, \ 1\leq k\leq m\}
 $$
 and
 $$
 \frak{l}_{\underline{\ell}}=\left(\bigoplus_{k=1,l_k\geq 2}^{m}L_{\widehat{\sl_{l_k}}}(n,0)_{1}\right)\bigoplus\frak{h}_{\underline{\ell}}.
 $$
Then $\frak{l}_{\underline{\ell}}$ is a Levi subalgebra of $\sl_l(\C)$ and $\frak{h}_{\underline{\ell}}$ is the center of $\frak{l}_{\underline{\ell}}$ which is contained in the (fixed) Cartan subalgebra of $\sl_{l}$. Denote by $L_{\widehat{\frak{l}_{\underline{\ell}}}}(n,0)$ the vertex operator subalgebra of $L_{\widehat{\sl_l}}(n,0)$ generated by $\frak{l}_{\underline{\ell}}$.
It is easy to see that
\begin{equation} \label{levi-tensor}
L_{\widehat{\frak{l}_{\underline{\ell}}}}(n,0) \cong \left(\bigotimes_{k=1,l_k\geq 2}^{m}L_{\widehat{\sl_{l_k}}}(n,0)\right)\bigotimes L_{\widehat{\frak{h}}_{\underline{\ell}}}(n,0),
\end{equation}
 where $L_{\widehat{\frak{h}}_{\underline{\ell}}}(n,0)$ is the Heisenberg vertex operator subalgebra of $L_{\widehat{\sl_{l}}}(n,0)$ generated by $\frak{h}_{\underline{\ell}}$. We denote
$$
K(\sl_l, \frak{l}_{\underline{\ell}},n)=C_{L_{\widehat{\sl_{l}}}(n,0)}(L_{\widehat{\frak{l}_{\underline{\ell}}}}(n,0)).
$$
\begin{defn}
$K(\sl_l, \frak{l}_{\underline{\ell}},n)$ is called a   relative parafermion vertex operator algebra.
\end{defn}

We have the following level-rank duality  which characterizes  relations between tensor decomposition and relative parafermion vertex operator algebras.
\begin{theorem}\label{lem4.2} We have
$$
C_{L_{\widehat{\sl_{n}}}(l_1,0)\otimes\cdots\otimes L_{\widehat{\sl_{n}}}(l_m,0)}(L_{\widehat{\sl_{n}}}(l,0))\cong K(\sl_l, \frak{l}_{\underline{\ell}},n).
$$
\end{theorem}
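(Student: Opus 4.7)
The plan is to reduce Theorem~\ref{lem4.2} to Theorem~\ref{lr-dual2} by realizing both sides inside the lattice vertex operator algebras $W=V_{A_{n-1}^{\times l}}$ and $\widetilde W=V_{\widetilde A_{l-1}^{\times n}}$ of Section 3, and then transporting the commutant across the isomorphism $\tau\colon V_{N_n^l}\to V_{\widetilde N_l^n}$ of Lemma~\ref{de-pa2}.

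First I would embed $V=\bigotimes_k L_{\widehat{\sl_n}}(l_k,0)\hookrightarrow W$ by embedding each factor diagonally, $L_{\widehat{\sl_n}}(l_k,0)\hookrightarrow V_{A_n^{\times l,k}}\cong L_{\widehat{\sl_n}}(1,0)^{\otimes l_k}$, as in Section 3. Under this embedding, $U=L_{\widehat{\sl_n}}(l,0)$ is the full diagonal subalgebra of $W$ studied in Section 3, so $C_V(U)=V\cap C_W(U)$. By Theorem~\ref{lr-dual2}, $C_W(U)$ is an explicit copy of $K(\sl_l,n)$ sitting inside $V_{N_n^l}\subseteq W$, hence
\[
C_V(U)=V\cap C_W(U)\subseteq V\cap V_{N_n^l},
\]
and the problem reduces to identifying this intersection with $K(\sl_l,\frak l_{\underline\ell},n)$.

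Next I would decompose $V\cap V_{N_n^l}$ blockwise. For each $k$ with $l_k\geq 2$, applying \eqref{decomp1} to the $k$-th block gives
\[
L_{\widehat{\sl_n}}(l_k,0)=\bigoplus_{\gamma_k\in K_k^\circ/K_k}V_{K_k+\gamma_k}\otimes U^{(l_k,n)}(0,\gamma_k),
\]
with $K_k\cong\sqrt{l_k}A_{n-1}$ and $U^{(l_k,n)}(0,0)\cong K(\sl_n,l_k)$. Tensoring across blocks and imposing the diagonal-Cartan-zero condition produces a decomposition of $V\cap V_{N_n^l}$ whose blockwise parafermion part is $\bigotimes_k K(\sl_n,l_k)$, together with extra summands indexed by tuples $(\gamma_1,\ldots,\gamma_m)$ whose sum lies in the lattice $K$ of \eqref{lattice1}. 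I would then transport this picture to $V_{\widetilde N_l^n}$ via $\tau=\tau^1\theta$, whose underlying lattice map $\alpha^{ip}-\alpha^{i,p+1}\mapsto\beta^{pi}-\beta^{p,i+1}$ swaps the $n$-index and the $l$-index. Under this swap, grouping $p$-indices on the $\alpha$-side into the blocks $\{s_{k-1}+1,\ldots,s_k\}$ dictated by $\underline\ell$ corresponds on the $\beta$-side to singling out the simple roots $\beta^{i1}+\cdots+\beta^{in}$ with $s_{k-1}+1\leq i\leq s_k-1$ of the block $\sl_{l_k}\subseteq\frak l_{\underline\ell}$. Combining this matching with the identification $\tau(C_W(U))=L_{\widehat{\sl_l}}(n,0)\cap V_{\widetilde N_l^n}$ from Theorem~\ref{lr-dual2}, I would obtain
\[
\tau(C_V(U))=\tau(C_W(U))\cap\tau(V\cap V_{N_n^l})=C_{L_{\widehat{\sl_l}}(n,0)}\!\bigl(L_{\widehat{\frak l_{\underline\ell}}}(n,0)\bigr)=K(\sl_l,\frak l_{\underline\ell},n).
\]

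The hard part will be this compatibility: verifying that $\tau$ identifies the blockwise tensor structure on $V\cap V_{N_n^l}$ with the Levi structure on $L_{\widehat{\frak l_{\underline\ell}}}(n,0)\cap V_{\widetilde N_l^n}$. Concretely, one must check that the intra-block differences $\alpha^{ij}-\alpha^{i,j+1}$ with $s_{k-1}+1\leq j\leq s_k-1$ (generating the lattices $N_n^{l,k}$) are sent by $\tau$ into the sublattice of $\widetilde N_l^n$ centralizing $L_{\widehat{\frak h_{\underline\ell}}}(n,0)$, and that the cross-block differences $\alpha^{i,s_k}-\alpha^{i,s_k+1}$ (which lie in $V_{N_n^l}$ but not in $V$) correspond on the other side to the directions in $L_{\widehat{\sl_l}}(n,0)$ excluded from the Levi $L_{\widehat{\frak l_{\underline\ell}}}(n,0)$. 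Once this compatibility is in place, the weight-zero/decomposition argument used to prove Theorem~\ref{lr-dual2} adapts directly to give the desired equality of commutants.
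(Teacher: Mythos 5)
Your formal framing is consistent as far as it goes: $C_V(U)=V\cap C_W(U)$ with $W=V_{A_{n-1}^{\times l}}$, $C_W(U)\subseteq V_{N_n^l}$, and injectivity of $\tau$ give $\tau(C_V(U))=\tau(C_W(U))\cap\tau(V\cap V_{N_n^l})$, and Theorem~\ref{lr-dual2} identifies $\tau(C_W(U))$ with $K(\sl_l,n)$. But the remaining equality $K(\sl_l,n)\cap\tau(V\cap V_{N_n^l})=K(\sl_l,\frak{l}_{\underline{\ell}},n)$ is essentially the whole theorem, and the compatibility check you propose cannot deliver it. Your first check is vacuous: by \eqref{para1} \emph{every} element of $V_{\widetilde{N}_{l}^{n}}$ is annihilated by the nonnegative modes of the full Cartan $\frak{h}_l$, hence already commutes with $L_{\widehat{\frak{h}}_{\underline{\ell}}}(n,0)$. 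What cuts $K(\sl_l,\frak{l}_{\underline{\ell}},n)$ out of $K(\sl_l,n)$ is commutation with the nonabelian blocks $L_{\widehat{\sl_{l_k}}}(n,0)$, whose generators $\sum_j e^{\pm\beta^{ij}}$ carry nonzero $\frak{h}_l$-charge and so lie outside $V_{\widetilde{N}_{l}^{n}}$; that condition cannot be transported through $\tau$ (defined only on $V_{N_n^l}$) by inspecting images of lattice vectors. Symmetrically, membership in $V$ is not a lattice-support condition: for columns $j$ in block $k$ and $j'$ in a different block $k'$, the element $\sum_{j,j'}e^{\alpha^{ij}-\alpha^{ij'}}$ (the product of the block-diagonal root vectors $\sum_j e^{\alpha^{ij}}$ and $\sum_{j'}e^{-\alpha^{ij'}}$) lies in $V\cap V_{N_n^l}$ although its lattice support is entirely along cross-block differences, while a single $e^{\alpha^{i,s_k}-\alpha^{i,s_k+1}}$ lies in $V_{N_n^l}$ but not in $V$ once $l_k\geq 2$. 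So tracking where the generators of $N_n^l$ go under $\tau$ proves neither inclusion of your key identity; the "hard part" you flag is not a lattice bookkeeping matter at all.

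What actually closes the gap is the representation-theoretic matching that your outline mentions only in the last sentence. The paper (for $m=2$) decomposes $L_{\widehat{\sl_n}}(1,0)^{\otimes l}$ blockwise via Theorem~\ref{lr-dual2}, so that $C_W(U)=\bigoplus_{\la,\mu}M^{(l_1,n)}(\bar\la)\otimes M^{(l_2,n)}(\bar\mu)\otimes M(\la,\mu,0)$ becomes a module over $M^{(l_1,n)}(0)\otimes M^{(l_2,n)}(0)\cong K(\sl_{l_1},n)\otimes K(\sl_{l_2},n)$ whose vacuum multiplicity space is exactly $C_V(U)=M(0,0,0)$; it decomposes $K(\sl_l,n)$ over the same algebra using \eqref{levi-decomp1} together with \eqref{decomp2}, with vacuum multiplicity space $\widetilde{M}(0,0,0)=K(\sl_l,\frak{l}_{\underline{\ell}},n)$; it checks that the isomorphism of Theorem~\ref{lr-dual2} matches the acting subalgebras, $\tau(M^{(l_k,n)}(0))=U^{(n,l_k)}(0,0)=K(\sl_{l_k},n)$; and it then identifies the two vacuum components using the lowest-weight criterion of Lemma~\ref{irr-1}. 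This decomposition-and-matching argument is not an afterthought that "adapts directly" once a lattice compatibility is verified; it is the proof, and as written your proposal does not contain it.
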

\begin{proof} We just prove the theorem for $m=2$, since for  $m>2$ the argument is quite similar. Now assume that $m=2$. Then $l=l_1+l_2$. We may assume that $l_1,l_2\geq 2$. Recall the natural imbedding
$$
L_{\widehat{\sl_{n}}}(l,0)\subseteq L_{\widehat{\sl_{n}}}(l_1,0)\otimes L_{\widehat{\sl_{n}}}(l_2,0)\subseteq L_{\widehat{\sl_{n}}}(1,0)^{\otimes l_1}\otimes  L_{\widehat{\sl_{n}}}(1,0)^{\otimes l_2}\cong L_{\widehat{\sl_{n}}}(1,0)^{\otimes l}.
$$
Let $P(l_i\Lambda_{0})$ be the set of weights of the  $\widehat{\sl_n}$-module $L_{\widehat{\sl_{n}}}(l_i,0)$, $i=1,2$. Denote
$$
P^{n,l_i}_{+}=\{\la\in P(l_i\Lambda_{0})\; |\; \la {\rm \ is \ integral \ dominate \ and \ }  \la+\delta\notin P(l_i\Lambda_{0})\}, \ i=1,2.
$$
 By Theorem \ref{lr-dual2}, we have
$$
\begin{array}{ll}
 &L_{\widehat{\sl_{n}}}(1,0)^{\otimes l_1}\otimes L_{\widehat{\sl_{n}}}(1,0)^{\otimes l_2}\\
&\quad =  [\bigoplus_{\la\in P^{n,l_1}_{+}}(L_{\widehat{\sl_{n}}}(l_1,\bar{\lambda})\otimes M^{(l_1,n)}(\bar{\lambda}))]\otimes
     [\bigoplus_{\mu\in P^{n,l_2}_{+}}(L_{\widehat{\sl_{n}}}(l_2,\bar{\mu})\otimes M^{(l_2,n)}(\bar{\mu}))]\\
     &\quad = \bigoplus_{\la\in P^{n,l_1}_{+}}\bigoplus_{\mu\in P^{n,l_2}_{+}}[L_{\widehat{\sl_{n}}}(l_1,\bar{\lambda})\otimes L_{\widehat{\sl_{n}}}(l_2,\bar{\mu})\otimes  M^{(l_1,n)}(\bar{\lambda})\otimes M^{(l_2,n)}(\bar{\mu})]\\
    &\quad  =  \bigoplus_{\la\in P^{n,l_1}_{+}}\bigoplus_{\mu\in P^{n,l_2}_{+}}[\bigoplus_{\gamma\in P_{+}(\la,\mu)}(L_{\widehat{\sl_{n}}}(l,\bar{\gamma})
     \otimes M(\la,\mu,\bar{\gamma}))]\otimes  M^{(l_1,n)}(\bar{\lambda})\otimes M^{(l_2,n)}(\bar{\mu}),
\end{array}
$$
where $P_{+}(\la,\mu)$ is a set of integral dominant weights  of the $\widehat{\sl_{n}}$-module $L_{\widehat{\sl}_{n}}(l_1, \lambda)\otimes L_{\widehat{\sl}_{n}}(l_2, \mu)$. $M(0,0,0)$ is the commutant vertex operator algebra $C_{V}(U)$ and $M(\la,\mu,\bar{\gamma})$ are $C_{V}(U)$-modules, $\la\in P^{n,l_1}_{+}, \mu\in P^{n,l_2}_{+}, \gamma\in P_{+}(\la,\mu)$. So
$$
\begin{array}{c}
 C_{L_{\widehat{\sl_{n}}}(1,0)^{\otimes l}}(L_{\widehat{\sl_n}}(l,0))
= \bigoplus_{\la\in P^{n,l_1}_{+}}\bigoplus_{\mu\in P^{n,l_2}_{+}}
M^{(l_1,n)}(\bar{\lambda})\otimes M^{(l_2,n)}(\bar{\mu})\otimes M(\la,\mu,0).
\end{array}
$$
Note that $$M^{(l_1,n)}(0)=C_{L_{\widehat{\sl_{n}}}(1,0)^{\otimes l_1}}(L_{\widehat{\sl_{n}}}(l_1,0)), \
 M^{(l_2,n)}(0)=C_{L_{\widehat{\sl_{n}}}(1,0)^{\otimes l_2}}(L_{\widehat{\sl_{n}}}(l_2,0)).$$

On the other hand, we have
\begin{equation} \label{levi-decomp1}
\begin{array}{rl}
 L_{\widehat{\sl_l}}(n,0)
&= \bigoplus_{\la^{1},\la^{2},\la^{3}}L_{\widehat{\sl_{l_1}}}(n,\overline{\la^{1}})\otimes L_{\widehat{\sl_{l_2}}}(n,\overline{\la^{2}})\otimes L_{\widehat{\frak{h}}_{\underline{\ell}}}(n,\la^{3})\otimes \widetilde{M}(\la^{1},\la^{2},\la^{3})\\
&=   \bigoplus_{\la^1,\lambda^2,\la^3}[\bigoplus_{\gamma^1\in K_{1}^{\circ}/K_{1}}V_{K_1+\overline{\la^{1}}+\gamma^1}\otimes U^{(n,l_1)}(\overline{\la^{1}},\gamma^1)]
\otimes \\
&\quad [\bigoplus_{\gamma^2\in K_{2}^{\circ}/K_{2}}V_{K_2+\overline{\la^{2}}+\gamma^2}\otimes U^{(n,l_2)}(\bar{\la^{2}},\gamma^2)]
     \otimes L_{\widehat{\frak{h}}_{\underline{\ell}}}(n,\la^{3})\otimes \widetilde{M}(\la^{1},\la^{2},\la^{3}),
\end{array}
\end{equation}
where $L_{\widehat{\sl_{l_i}}}(n,\overline{\la^{i}})$ are irreducible 
$L_{\widehat{\sl_{l_i}}}(n,0)$-modules for $i=1,2$ and 
$L_{\widehat{\frak{h}}_{\underline{\ell}}}(n,\la^{3})$ is an irreducible 
$L_{\widehat{\frak{h}}_{\underline{\ell}}}(n,0)$-module with lowest weight 
$\la^{3}\in \mathbb{Q}_{+}$. $M(\la^{1},\la^{2}, \la^{3})$ are 
modules of $C_{L_{\widehat{\sl_l}}(n,0)}(L_{\widehat{\frak{l}_{\underline{\ell}}}}(n,0))$ 
and $\widetilde{M}(0,0,0)=C_{L_{\widehat{\sl_l}}(n,0)}(L_{\widehat{\frak{l}_{\underline{\ell}}}}(n,0))=K(\sl_l, \frak{l}_{\underline{\ell}},n)$. 
So we have
\begin{equation} K(\sl_l,n)
=    \bigoplus_{\la^1,\lambda^2}U^{(n,l_1)}(\overline{\la^{1}},-\overline{\la^1})
\otimes U^{(n,l_2)}(\overline{\la^{2}},-\overline{\la^2})\otimes \widetilde{M}(\la^{1},\la^{2},0).
\end{equation}
   By Theorem \ref{lr-dual2}, we have
$$ K(\sl_l,n)\cong C_{L_{\widehat{\sl_{n}}}(1,0)^{\otimes l}}(L_{\widehat{\sl_n}}(l,0)).
$$
Let $\tau$ be the isomorphism from $C_{L_{\widehat{\sl_{n}}}(1,0)^{\otimes l}}(L_{\widehat{\sl_n}}(l,0))$ to $K(\sl_l,n)$. Then
$$
\begin{array}{l}
 \tau(\bigoplus_{\la\in P^{n,l_1}_{+}}\bigoplus_{\mu\in P^{n,l_2}_{+}}M^{(l_1,n)}(\bar{\lambda})\otimes M^{(l_2,n)}(\bar{\mu})\otimes  M(\la,\mu,0))\\
     \quad \quad=   \bigoplus_{\la^1,\lambda^2}U^{(n,l_1)}(\overline{\la^{1}},-\overline{\la^1})
\otimes U^{(n,l_2)}(\overline{\la^{2}},-\overline{\la^2})\otimes \widetilde{M}(\la^{1},\la^{2},0)
\end{array}
$$
and
$$
\tau(M^{(l_1,n)}(0))=U^{(n,l_1)}(0,0)=K(\sl_{l_1},n), $$$$ \tau(M^{(l_2,n)}(0))=U^{(n,l_2)}(0,0)=K(\sl_{l_2},n).
$$
 Considering $\bigoplus_{\la\in P^{n,l_1}_{+}}\bigoplus_{\mu\in P^{n,l_2}_{+}} M^{(l_1,n)}(\bar{\lambda})\otimes M^{(l_2,n)}(\bar{\mu})\otimes M(\la,\mu,0)$ and $\bigoplus_{\la^1,\lambda^2}U^{(n,l_1)}(\overline{\la^{1}},-\overline{\la^1})
\otimes U^{(n,l_2)}(\overline{\la^{2}},-\overline{\la^2})\otimes \widetilde{M}(\la^{1},\la^{2},0)$ as $K(\sl_{l_1},n)\otimes K(\sl_{l_2},n)$-modules and using (2) of Lemma \ref{decomp2}, we have
$$
\tau(M(0,0,0))= \widetilde{M}(0,0,0).
$$
That is,
$$
C_{L_{\widehat{\sl_{n}}}(l_1,0)\otimes L_{\widehat{\sl_{n}}}(l_2,0)}(L_{\widehat{\sl_{n}}}(l,0))\cong C_{L_{\widehat{\sl_{l}}}(n,0)}(L_{\widehat{\sl_{l_1}}}(n,0)\otimes L_{\widehat{\sl_{l_2}}}(n,0)\otimes L_{\widehat{\frak{h}}_{\underline{\ell}}}(n,0)).
$$
\end{proof}

\begin{cor} For any $ \underline{\ell}$ and $n$, $C_{L_{\widehat{\frak{sl}_{|\underline{\ell}|}}}(n, 0)}(K(\frak{sl}_{|\underline{\ell}|}, \frak{l}_{\underline{\ell}}, n)) 
$  is isomorphic to the tensor product of the vertex operator algebras corresponding the  semi-simple Lie algebras and a lattice vertex operator algebra, and in particular is a rational vertex operator algebra.
\end{cor}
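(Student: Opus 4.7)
\smallskip
\noindent\textbf{Proof plan for the corollary.} The plan is to identify the commutant $C:=C_{L_{\widehat{\frak{sl}_l}}(n,0)}(K(\frak{sl}_l,\frak{l}_{\underline{\ell}},n))$ explicitly as
\[
C \;\cong\; \Bigl(\bigotimes_{k:\,l_k\geq 2} L_{\widehat{\frak{sl}_{l_k}}}(n,0)\Bigr)\,\otimes\, V_{\Lambda},
\]
where $\Lambda$ is a positive definite even lattice of rank $m-1$ that gives a lattice extension of the Heisenberg $L_{\widehat{\frak{h}}_{\underline{\ell}}}(n,0)$. Rationality of $C$ is then immediate from Theorem~\ref{Dong-L} (for $V_\Lambda$), the standard rationality of $L_{\widehat{\frak{sl}_{l_k}}}(n,0)$, and the fact that rationality is preserved under tensor products.

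\smallskip
The first step is to establish two elementary inclusions. Because $K(\frak{sl}_l,\frak{l}_{\underline{\ell}},n)$ is \emph{defined} as the commutant of $L_{\widehat{\frak{l}_{\underline{\ell}}}}(n,0)$, the double-commutant inclusion immediately yields $L_{\widehat{\frak{l}_{\underline{\ell}}}}(n,0)\subseteq C$, which by (\ref{levi-tensor}) already contains $\bigotimes_{k:l_k\geq 2} L_{\widehat{\frak{sl}_{l_k}}}(n,0)$ and $L_{\widehat{\frak{h}}_{\underline{\ell}}}(n,0)$. Second, since $\frak{h}\subseteq \frak{l}_{\underline{\ell}}$ gives $K(\frak{sl}_l,\frak{l}_{\underline{\ell}},n)\subseteq K(\frak{sl}_l,n)$, reversing commutants shows $V_{\sqrt{n}A_{l-1}}=C_{L_{\widehat{\frak{sl}_l}}(n,0)}(K(\frak{sl}_l,n))\subseteq C$. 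Taking the subalgebra of $C$ generated by $\bigotimes_k L_{\widehat{\frak{sl}_{l_k}}}(n,0)$ together with $V_{\sqrt{n}A_{l-1}}$ produces a vertex operator subalgebra of the claimed form: the lattice $\Lambda$ is the rank $m-1$ sublattice of $\sqrt{n}A_{l-1}$ obtained as the intersection with the orthogonal complement of $\bigoplus_k K_k$ (the Heisenberg dual to the semisimple root lattices), and $V_{\sqrt{n}A_{l-1}}$ contains $V_{\bigoplus_k K_k}\otimes V_\Lambda$ as a natural subalgebra extending $L_{\widehat{\frak{h}}_{\underline{\ell}}}(n,0)$.

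\smallskip
The second step, and the main obstacle, is the reverse inclusion. I would read $C$ off from the decomposition (\ref{levi-decomp1}): writing
\[
L_{\widehat{\frak{sl}_l}}(n,0)=\bigoplus_{(\la^1,\dots,\la^m,\la^3)} \Bigl(\bigotimes_k L_{\widehat{\frak{sl}_{l_k}}}(n,\overline{\la^k})\Bigr)\otimes L_{\widehat{\frak{h}}_{\underline{\ell}}}(n,\la^3)\otimes \widetilde{M}(\la^1,\dots,\la^m,\la^3),
\]
with $\widetilde{M}(0,\dots,0)=K(\frak{sl}_l,\frak{l}_{\underline{\ell}},n)$, the commutant $C$ is the isotypic component of the vacuum $K(\frak{sl}_l,\frak{l}_{\underline{\ell}},n)$-module. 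The crucial claim is that this isotypic component contains only summands with $\overline{\la^k}=0$ for every $k$, while $\la^3$ ranges over the lattice $\Lambda$ identified above. The vanishing of the $\overline{\la^k}$ follows from a lowest-weight argument exactly parallel to Lemma~\ref{irr-1}(2) and Lemma~\ref{de-pa1}: computing with the relative conformal vector $\omega-\omega_{\frak{l}_{\underline{\ell}}}$, any non-trivial $\overline{\la^k}\neq 0$ contributes a strictly positive shift $-(\la^k+2\rho_k,\la^k)/(2(n+l_k))>0$ to the lowest $L(0)$-weight of $\widetilde{M}(\la^1,\dots,\la^m,\la^3)$, so such a multiplicity module cannot be isomorphic to the vacuum $\widetilde{M}(0,\dots,0)$ whose lowest weight is zero. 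Once this is verified, the summands with $\overline{\la^k}=0$ for all $k$ regroup, via (\ref{decomp2}) applied inside each $L_{\widehat{\frak{sl}_{l_k}}}(n,0)$, into precisely $\bigl(\bigotimes_k L_{\widehat{\frak{sl}_{l_k}}}(n,0)\bigr)\otimes V_\Lambda$, completing the identification and establishing rationality of $C$.
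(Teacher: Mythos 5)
Your two steps contradict each other, and the contradiction sits exactly at the ``crucial claim'' of your Step 2. In Step 1 you correctly obtain $V_{\sqrt{n}A_{l-1}}=C_{L_{\widehat{\frak{sl}_l}}(n,0)}(K(\frak{sl}_l,n))\subseteq C:=C_{L_{\widehat{\frak{sl}_l}}(n,0)}(K(\frak{sl}_l,\frak{l}_{\underline{\ell}},n))$. But $V_{\sqrt{n}A_{l-1}}$ contains the lattice vectors $e^{\sqrt{n}\beta}$ (realized as nonzero multiples of $e_{\beta}(-1)^{n}\mathbf{1}$) for \emph{every} root $\beta$ of $\frak{sl}_l$, in particular for the cross--block roots $\beta\notin\frak{l}_{\underline{\ell}}$, and such a vector carries a nonzero weight for the Cartan of some $\frak{sl}_{l_k}$. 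Concretely, for $\underline{\ell}=(2,1)$, $l=3$, the vector $e_{\alpha_2}(-1)^{n}\mathbf{1}$ is a lowest weight vector for $\widehat{\frak{l}_{\underline{\ell}}}$ of $\frak{sl}_2$-weight $-n$; its Sugawara weight for $\widehat{\frak{sl}_2}$ at level $n$ is $n/4$ and its $\frak{h}_{\underline{\ell}}$-Heisenberg weight is $3n/4$, so its weight for the coset Virasoro $\omega-\omega_{\frak{l}_{\underline{\ell}}}$ is $0$, and it is killed by all nonnegative modes of $K(\frak{sl}_3,\frak{l}_{\underline{\ell}},n)\subseteq K(\frak{sl}_3,n)$ because it lies in $V_{\sqrt{n}A_{2}}$. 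Hence $C$ does contain summands of \eqref{levi-decomp1} with $\overline{\la^k}\neq 0$, so the claim that only $\overline{\la^k}=0$ occurs is false, and the strict-positivity formula $-(\la^k+2\rho_k,\la^k)/(2(n+l_k))>0$ that you transplant from Lemma~\ref{de-pa1} cannot hold in this branching situation: that lemma rests on Proposition 11.4 of \cite{K} applied to the weights of $P(l\Lambda_0)$ for the diagonal coset inside the tensor power, whereas here the coset lowest weight is $0$ on infinitely many components with nontrivial $\overline{\la^k}$, as the example shows.

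The same cross vectors also break the structural assertion in Step 1: the subalgebra of $C$ generated by $\bigotimes_k L_{\widehat{\frak{sl}_{l_k}}}(n,0)$ together with $V_{\sqrt{n}A_{l-1}}$ is \emph{not} of the form $\bigl(\bigotimes_k L_{\widehat{\frak{sl}_{l_k}}}(n,0)\bigr)\otimes V_{\Lambda}$ with $\Lambda\subseteq \sqrt{n}A_{l-1}\cap\frak{h}_{\underline{\ell}}$; it is a strictly larger extension containing nontrivial $\widehat{\frak{sl}_{l_k}}$-isotypes over the canonically embedded Levi, and the appeal to \eqref{decomp2} has no bearing on assembling the Heisenberg modules $L_{\widehat{\frak{h}}_{\underline{\ell}}}(n,\la^3)$ into a lattice vertex operator algebra. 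So the proposal does not establish the corollary: any complete argument must account for these components with $\overline{\la^k}\neq 0$, which your Step 2 explicitly excludes and your Step 1 explicitly produces. Note also that the paper's own (very brief) proof takes a different tack: it starts only from $L_{\widehat{\frak{l}_{\underline{\ell}}}}(n,0)\subseteq C$ and invokes the mechanism of the proof of \eqref{de11} and \eqref{decomp1} (rationality of the extended subalgebra, positivity of the lowest weights of its nontrivial modules, and triviality of vacuum-like vectors), with $n$ and $l$ switched, rather than attempting the isotype classification and positivity estimate on which your argument stands or falls.
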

\begin{proof} One notes that $C_{L_{\widehat{\frak{sl}_{|\underline{\ell}|}}}(n, 0)}(K(\frak{sl}_{|\underline{\ell}|}, \frak{l}_{\underline{\ell}}, n)) \supseteq L_{\widehat{\frak{l}_{\underline{\ell}}}}(n,0)$.  Then by using \eqref{levi-tensor} and  \eqref{levi-decomp1} by setting $ (\lambda^1, \lambda^2, \lambda^3)=(0, 0, 0)$,  a similar argument to the proof of \eqref{de11} and \eqref{decomp1}, with $n$ and $l$ switched, will prove the corollary. The vertex operator algebra corresponding to the semi-simple Lie algebra is $\left(\bigotimes_{k=1,l_k\geq 2}^{m}L_{\widehat{\sl_{l_k}}}(n,0)\right)$ which is rational. The lattice vertex operator algebra corresponds to the Heisenberg vertex operator algebra $ L_{\widehat{\frak{h}}_{\underline{\ell}}}(n,0)$.
\end{proof}
\section{Duality Pairs and Reciprocity}
\subsection{} Let $(V, Y, \mbf{1}, \omega)$ be a vertex operator algebra. A semi-conformal vertex subalgebra of $V$ is a vertex subalgebra $U$ together with a conformal vector $\omega'\in U$ such that $ (U, Y|_U, \mbf{1}, \omega')$ is a vertex operator algebra and $ L(n)|_U=L'(n)|_U$ for all $ n\geq -1$, where $ L'(n)\in \End(V) $ is defined by $ Y(\omega', z)=\sum L'(n)z^{-n-2}$. Note that if $ L(-2)|_U=L'(-2)|_U$, then $\omega=\omega'$. In this case, $V$ is called a conformal extension of $U$ and $U$ is called a conformal vertex operator subalgebra. One can check the proof of \cite[Theorem 3.11.12]{LL} that if $U$ is a semi-conformal subalgebra of $V$, then $ C_V(U)$ is also a semi-conformal vertex subalgebra of $V$ with conformal vector $\omega-\omega'$. A semi-conformal subalgebra $(U, \omega')$ of $V$ is called closed if $ C_V(C_V(U))=U$. We will denote $ \bar{U}=C_V(C_V(U))$ if $V$ is understood from the context. If $U$ is closed in $V$, then $ U=\ker(L(-1)-L'(-1): V\rightarrow V)$ is uniquely determined by $ \omega'$. In fact the converse also holds.

\begin{lem} If $(U, \omega')$ and $(U, \omega'')$ are two semi-conformal subalgebras  of $V$, then $ \omega'=\omega''$.
\end{lem}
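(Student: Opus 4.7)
The plan is to set $\nu = \omega' - \omega'' \in U$ and prove $\nu = 0$ by computing $L'(0)\nu$ in two different ways: the semi-conformal condition forces it to equal $L(0)\nu = 2\nu$, while the containment $\nu \in C_V(U)$ forces it to be zero.

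First I would unpack the semi-conformal hypotheses into modes: $L'(n)|_U = L(n)|_U = L''(n)|_U$ for $n \geq -1$ is the same as $\omega'_m|_U = \omega_m|_U = \omega''_m|_U$ for all $m \geq 0$. Subtracting gives $\nu_m u = 0$ for every $u \in U$ and every $m \geq 0$. Using skew-symmetry $Y(u,z)\nu = e^{zL(-1)} Y(\nu,-z) u$ and extracting coefficients gives
\[
u_n \nu \;=\; \sum_{k \geq 0} \frac{(-1)^{n+k+1}}{k!}\, L(-1)^k\, \nu_{n+k} u
\]
for $n \geq 0$; since $n+k \geq 0$ throughout, each $\nu_{n+k} u$ vanishes, so $u_n \nu = 0$ for all $u \in U$ and all $n \geq 0$. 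This is exactly the statement $\nu \in C_V(U)$.

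Specializing the previous identity to $u = \omega' \in U$ and $n = 1$ gives $\omega'_1 \nu = L'(0)\nu = 0$. On the other hand, since $\nu \in U$ and $L'(0)|_U = L(0)|_U$, we have $L'(0)\nu = L(0)\nu$. Because $\omega'$ and $\omega''$ are weight-$2$ vectors in the VOAs $(U, \omega')$ and $(U, \omega'')$ respectively, and because $L(0)|_U = L'(0)|_U = L''(0)|_U$, both are weight-$2$ vectors for $L(0)$ as well, so $L(0)\nu = 2\nu$. Chaining these equalities,
\[
2\nu \;=\; L(0)\nu \;=\; L'(0)\nu \;=\; 0,
\]
whence $\nu = 0$ and $\omega' = \omega''$.

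The only nonroutine step is the passage $\nu_m u = 0 \ \Longrightarrow\ u_m \nu = 0$ (for $u \in U$, $m \geq 0$), i.e.\ the verification that $\nu \in C_V(U)$. This is where skew-symmetry (or the equivalent commutator formula together with the vacuum identity $\nu_m \mathbf{1} = 0$ for $m \geq 0$) does the real work; after that the proof is just a two-line calculation with $L(0)$-eigenvalues.
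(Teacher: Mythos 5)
Your proof is correct. It is the same idea as the paper's argument in substance --- the difference $\nu=\omega'-\omega''$ commutes with $U$ and is then killed by a weight consideration --- but the execution is genuinely different. The paper observes that $(U,\omega')$ and $(U,\omega'')$ are semi-conformal subalgebras of each other and then quotes two results from \cite{LL}: Theorem 3.11.12 to place $\omega'-\omega''$ in the commutant $C_{(U,\omega')}(U,\omega'')$, and Corollary 3.11.11 to identify that commutant with $\C\mathbf{1}$, after which weight $2$ versus weight $0$ forces $\omega'=\omega''$. You instead work entirely inside the ambient algebra $V$: the semi-conformal conditions give $\nu_m u=0$ for $u\in U$, $m\ge 0$, skew-symmetry converts this into $u_n\nu=0$ for $n\ge 0$ (your replacement for the cited Theorem 3.11.12), and then the single instance $\omega'_1\nu=L'(0)\nu=0$ combined with $L(0)\nu=2\nu$ gives $2\nu=0$ --- so you never need the statement $C_U(U)=\C\mathbf{1}$ at all, only the $L(0)$-eigenvalue of a conformal vector. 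What your route buys is a short, self-contained mode computation with no appeal to the structure theory of commutants; what the paper's route buys is brevity given the cited machinery and the mildly illuminating observation that the two conformal structures are semi-conformal over each other. Both are valid, and your eigenvalue finish is, if anything, the more economical way to conclude.
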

\begin{proof} By definition we have $ L'(n)|_U=L(n)|_U=L''(n)|_U$ for all
$ n\geq -1$. In particular, $(U, \omega')$ is  a semi-conformal subalgebra
of $(U, \omega'')$ and the vice verse.
By \cite[Corollary 3.11.11]{LL}, $C_{(U, \omega')}(U, \omega'')=\C{\bf 1}$.
By \cite[Theorem 3.11.12]{LL},
$\omega'-\omega''\in C_{(U, \omega')}(U, \omega'')$. So $\omega'=\omega''$.
\end{proof}

\subsection{}  Let $U$ be a vertex operator algebra and $ M$  a $V$-module. We denote by $ \Irr_M(U)$  the set of isomorphism classes of irreducible $U$-modules $N$ such that $ \Hom_{U}(N,M)\neq 0$. If $M$ is a semi-simple $U$-module, then we have
\[ M\cong \bigoplus_{N\in \Irr_M(U)} N\otimes \Hom_{U}(N,M)\]
as $U$-modules.  If $ U$ is a semi-conformal subalgebra of a vertex operator algebra $V$ and $ M$ is a $V$-module, then $ \Hom_{U}(N, M)$ is a $C_V(U)$-module and is denoted as $\rho_M(N)$ if $ U$ and $ V$ are understood from the context.
Let $V$ be a simple vertex operator algebra, a pair of semi-conformal simple vertex operator subalgebras $(U^1, U^2) $ of $V$ is called a {\em duality pair  in $V$ with respect to a $V$-module $M$}  if (a) $U^i=C_V(U^j)$ with $i\neq j$ and (b)
\[ M=\bigoplus N^1\otimes \rho_M(N^1)\]
as a $U^1\otimes U^2$-module. Here the summation is taken over all  $N^1\in\Irr_M(U^1)$ of irreducible $U_1$-submodules such that $ \Hom_{U^1}(N^1, M)\neq 0$  and $\rho_M: \Irr_M(U^1)\rightarrow \Irr_M(U^2)$ is a bijection.

In case $M=V$, we simply say that  $ (U^1, U^2)$ is a {\em duality pair in $V$}. This concept is motivated by the duality pairs of reductive groups defined by Howe in \cite{howe:reciprocity}

 Note that since both $U^1$ and $U^2$ are simple,  $ U^1\otimes U^2$ is a (conformal) vertex operator subalgebra of $V$. Also for any simple vertex operator algebra $V$, if one of $U^i$ is a lattice vertex operator algebra, then the rationality and the fusion rules of lattice vertex operators  state in Theorem~\ref{Dong-L} imply that (b) is a consequence of (a) for all irreducible $V$-modules $M$.
 We now summarize the results from Section 3 and Section 4:

 Lemma 3.2  and formula \eqref{decomp1} implies that $ C_V(C_V(V_{\f{h}_n}(l,0)))=
 V_{K}$. Hence $ (V_{K}, V_{N^l_{n}})$ is a duality pair in $V=L_{\wg}(1,0)^{\otimes l}=V_{A_{n-1}^{\times l}}$ with respective the irreducible module $V_{A_{n-1}^{\times l}}$. This is the case that when an even lattice $L=L'\perp L''$ then $ (V_{L'}, V_{L''})$ is a duality pair  in $ V_{L}$ with respect to $V_L$.

 Theorem \ref{lr-dual2} implies that $(K(\f{sl}_n, l), K(\f{sl}_l, n))$ is a duality pair in $V_{N^l_n}$ with respect to $V_{N^l_n}$.

\begin{theorem}\label{section5-t1} $(K(\f{sl}_l, n), V_{\sqrt{n}A_{l-1}})$ is a duality pair in $ L_{\widehat{\f{sl}_l}}(n,0)$.
\end{theorem}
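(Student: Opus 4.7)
The plan is to transport the construction of Section~3 to $L_{\widehat{\sl_l}}(n,0)$ with the roles of $n$ and $l$ interchanged. Setting $\widetilde K = \bigoplus_{i=1}^{l-1}\Z(\beta^{i1}+\cdots+\beta^{in}) \cong \sqrt{n}A_{l-1}$ inside $\widetilde A_{l-1}^{\times n}$, the lattice vertex operator subalgebra $V_{\widetilde K}\cong V_{\sqrt{n}A_{l-1}}$ of $L_{\widehat{\sl_l}}(n,0)$ plays exactly the role that $V_K$ played in Section~3: it is rational by Theorem~\ref{Dong-L} and contains the Heisenberg subalgebra $L_{\widehat{\f{h}}}(n,0)$ generated by the Cartan of $\f{sl}_l$. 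Rerunning the derivation that produced \eqref{decomp1} in this switched setup yields
\[
L_{\widehat{\sl_l}}(n,0) = \bigoplus_{\gamma\in\widetilde K^{\circ}/\widetilde K} V_{\widetilde K+\gamma}\otimes \widetilde U(0,\gamma),
\]
a decomposition as $V_{\widetilde K}\otimes K(\f{sl}_l,n)$-modules in which $\widetilde U(0,0) = K(\f{sl}_l,n)$ and the remaining $\widetilde U(0,\gamma)$ are modules for the parafermion.

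From this decomposition the mutual-commutant condition is read off. Since $V_{\widetilde K}\supseteq L_{\widehat{\f{h}}}(n,0)$, one has $C_{L_{\widehat{\sl_l}}(n,0)}(V_{\widetilde K}) \subseteq C_{L_{\widehat{\sl_l}}(n,0)}(L_{\widehat{\f{h}}}(n,0)) = K(\f{sl}_l,n)$ by definition of the parafermion, while the $\gamma=0$ summand places $K(\f{sl}_l,n)$ and $V_{\widetilde K}$ in commuting tensor factors and hence gives the reverse inclusion. The companion equality $C_{L_{\widehat{\sl_l}}(n,0)}(K(\f{sl}_l,n)) = V_{\sqrt{n}A_{l-1}}$ is formula (1.2) of the introduction, a parallel consequence of the same decomposition.

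For the Howe-type decomposition required by the duality-pair definition, the display above already has the form $M = \bigoplus N^1\otimes\rho_M(N^1)$ with $N^1 = V_{\widetilde K+\gamma}$ and $\rho_M(N^1) = \widetilde U(0,\gamma)$: each nonzero $\widetilde U(0,\gamma)$ is an irreducible $K(\f{sl}_l,n)$-module by the analog of Lemma~\ref{irr-1}(1), and the $V_{\widetilde K+\gamma}$ for distinct cosets $\gamma\in\widetilde K^{\circ}/\widetilde K$ are pairwise non-isomorphic irreducible $V_{\widetilde K}$-modules by Theorem~\ref{Dong-L}.

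The main obstacle is establishing the injectivity of $\rho_M$, i.e., showing $\widetilde U(0,\gamma)\not\cong\widetilde U(0,\gamma')$ as $K(\f{sl}_l,n)$-modules whenever $\gamma\neq\gamma'$ both contribute. My intended route mirrors Lemma~\ref{irr-1}(2): write $\omega = \omega_{\widetilde K} + \omega''$ with $\omega''$ the conformal vector of $K(\f{sl}_l,n)$, so that $L(0) = L_{\widetilde K}(0) + L''(0)$ on every summand. Since the minimal $L_{\widetilde K}(0)$-eigenvalue on $V_{\widetilde K+\gamma}$ equals $\tfrac12(\gamma_0,\gamma_0)$ for the minimal-length representative $\gamma_0\in\widetilde K+\gamma$, and the full $L(0)$-spectrum on $L_{\widehat{\sl_l}}(n,0)$ is controlled by the Weyl-Kac character formula, the lowest $L''(0)$-weight of each $\widetilde U(0,\gamma)$ is pinned down as an explicit function of $\gamma$ whose values separate the summands. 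As a cleaner alternative, since $V_{\widetilde K}$ and $K(\f{sl}_l,n)$ are mutual commutants and both simple, a general Howe-type argument in the spirit of \cite{howe:reciprocity}, of the sort already used implicitly for the earlier duality pairs in this section, forces the decomposition to be multiplicity-free in the strong sense required.
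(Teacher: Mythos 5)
Your treatment of condition (a) and of the irreducibility of the multiplicity spaces is in step with the paper: the $n\leftrightarrow l$ analogue of \eqref{decomp1}, together with the identification of the parafermion as the commutant of the lattice subalgebra as in \eqref{de11} and the positivity of lowest weights in the nonvacuum sectors as in \eqref{de5}, gives $C_{L_{\widehat{\f{sl}_l}}(n,0)}(V_{\sqrt{n}A_{l-1}})=K(\f{sl}_l,n)$ and $C_{L_{\widehat{\f{sl}_l}}(n,0)}(K(\f{sl}_l,n))=V_{\sqrt{n}A_{l-1}}$, and the analogue of Lemma \ref{irr-1}(1) makes each nonzero $\widetilde U(0,\gamma)$ irreducible. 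This is exactly how the paper supports the theorem, which it deduces from \eqref{decomp1} together with the remark preceding it in Section 5 that, when one member of the pair is a lattice vertex operator algebra, condition (b) follows from (a) via the rationality and fusion rules of Theorem \ref{Dong-L}.

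The genuine gap is the point you yourself flag as the main obstacle, the injectivity of $\rho_M$, and neither of your proposed routes closes it. The weight-separation argument cannot work: the charge-conjugation involution (the map $\theta$ used in Lemmas \ref{de-pa3} and \ref{de-pa2}, acting by $-1$ on the Cartan) restricts to an automorphism of $L_{\widehat{\f{sl}_l}}(n,0)$ preserving the Heisenberg subalgebra, hence preserving $K(\f{sl}_l,n)$, and it carries the charge-$\gamma$ sector onto the charge-$(-\gamma)$ sector; therefore $\widetilde U(0,-\gamma)$ is the $\theta$-twist of $\widetilde U(0,\gamma)$ and has the same character, in particular the same lowest conformal weight. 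So no invariant built from lowest weights, characters, or Weyl--Kac data can separate $\gamma$ from $-\gamma$, whereas these sectors are in general inequivalent $K(\f{sl}_l,n)$-modules: already for $l=2$, $n=3$ the two nonvacuum summands that occur both have lowest weight $2/3$ and are the two inequivalent $\Z_3$-parafermion modules, distinguished only by the zero-mode eigenvalue of the weight-three generator $\widetilde W^{3,1}$, not by any $L(0)$-data. Your fallback, that ``a general Howe-type argument forces multiplicity-freeness,'' is precisely the bijectivity statement to be proved, not an argument. The mechanism the paper relies on is of a different kind: it is the lattice structure --- simplicity of $L_{\widehat{\f{sl}_l}}(n,0)$ together with the rationality and the fusion rule $\gamma-\la-\mu\in L$ of Theorem \ref{Dong-L}, i.e.\ arguments in the style of Lemma \ref{irr-1} --- that is invoked to pair distinct cosets with distinct irreducible parafermion modules. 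To complete your proof you must establish (or cite) the inequivalence of $\widetilde U(0,\gamma)$ for distinct occurring $\gamma$ by such a fusion/simplicity argument; as written, condition (b) of the duality-pair definition is not established.
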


By Theorem \ref{lr-dual2} and Theorem \ref{section5-t1} we have
\begin{cor} $(L_{\widehat{\f{sl}_n}}(l, 0), K(\f{sl}_l, n))$ is a duality pair in $L_{\widehat{\f{sl}_n}}(1, 0)^{\otimes l}=V_{A_{n-1}^{\times l}}$.
\end{cor}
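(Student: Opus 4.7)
The plan is to verify the two defining conditions for $(L_{\widehat{\f{sl}_n}}(l,0), K(\f{sl}_l,n))$ to be a duality pair in $V := V_{A_{n-1}^{\times l}}$, drawing on the decompositions in Theorem~\ref{lr-dual2} and the input of Theorem~\ref{section5-t1}.

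Condition (a) requires the two commutant identities $L_{\widehat{\f{sl}_n}}(l,0) = C_V(K(\f{sl}_l,n))$ and $K(\f{sl}_l,n) = C_V(L_{\widehat{\f{sl}_n}}(l,0))$. The second equality is exactly Theorem~\ref{lr-dual2}, and it tautologically yields $L_{\widehat{\f{sl}_n}}(l,0) \subseteq C_V(K(\f{sl}_l,n))$. For the reverse inclusion I would use the decomposition
\[
V \;=\; L_{\widehat{\f{sl}_n}}(l,0)\otimes K(\f{sl}_l,n) \;\oplus\; \bigoplus_{\lambda\in P^n_+,\,\lambda\neq l\Lambda_0} L_{\widehat{\f{sl}_n}}(l,\bar{\lambda})\otimes M^{(l,n)}(\bar{\lambda})
\]
and a vacuum-like vector argument. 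Since $K(\f{sl}_l,n)$ acts only on the second tensor factor, any element of $C_V(K(\f{sl}_l,n))$ must have its second-factor components vacuum-like in the respective $K(\f{sl}_l,n)$-modules. In the $\lambda=0$ summand such vectors form $\C\mathbf{1}$ by the simplicity of $K(\f{sl}_l,n)$, while in the $\lambda\neq 0$ summands Lemma~\ref{de-pa1} gives that $M^{(l,n)}(\bar{\lambda})$ has strictly positive lowest conformal weight, so applying $\omega''_1 = L''(0)$ with $\omega''$ the Virasoro vector of $K(\f{sl}_l,n)$ forces the vacuum-like component to vanish. Hence $C_V(K(\f{sl}_l,n)) = L_{\widehat{\f{sl}_n}}(l,0)\otimes\C\mathbf{1}$.

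For condition (b), the $L_{\widehat{\f{sl}_n}}(l,0)\otimes K(\f{sl}_l,n)$-module decomposition displayed above, together with the irreducibility of each $M^{(l,n)}(\bar{\lambda})$ as a $K(\f{sl}_l,n)$-module, are part of the content of Theorem~\ref{lr-dual2}. Surjectivity of the correspondence $\rho_V\colon L_{\widehat{\f{sl}_n}}(l,\bar{\lambda}) \mapsto M^{(l,n)}(\bar{\lambda})$ onto $\Irr_V(K(\f{sl}_l,n))$ is immediate from the decomposition. For injectivity I would pass to the companion decomposition
\[
V_{N^l_n} \;=\; K(\f{sl}_n,l)\otimes K(\f{sl}_l,n) \;\oplus\; \bigoplus_{\lambda\neq l\Lambda_0} W^{(l,n)}(\bar{\lambda})\otimes M^{(l,n)}(\bar{\lambda})
\]
also furnished by Theorem~\ref{lr-dual2} and invoke the duality pair $(K(\f{sl}_n,l), K(\f{sl}_l,n))$ in $V_{N^l_n}$ that is established just before the corollary: the very definition of a duality pair forces the $M^{(l,n)}(\bar{\lambda})$ appearing there to be pairwise inequivalent as $K(\f{sl}_l,n)$-modules, which is precisely the injectivity needed for $\rho_V$.

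The main obstacle is this injectivity step. Conformal-weight data on the $M^{(l,n)}(\bar{\lambda})$ alone does not separate them, so the argument must route through the smaller ambient $V_{N^l_n}$ in which the duality structure is sharper. This is where Theorem~\ref{section5-t1} ultimately enters: the rigidity built into the pair $(K(\f{sl}_n,l), K(\f{sl}_l,n))$ in $V_{N^l_n}$ descends from the lattice decomposition $L_{\widehat{\f{sl}_n}}(l,0) = \bigoplus_\gamma V_{K+\gamma}\otimes U^{(l,n)}(0,\gamma)$ of \eqref{decomp1}, whose pairwise inequivalent multiplicity spaces are supplied by Theorem~\ref{section5-t1} (applied with $n$ and $l$ interchanged) and transported to $V_{N^l_n}$ via the isomorphism $\tau$ of Lemma~\ref{de-pa2}.
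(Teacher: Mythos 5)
Your proposal is correct and follows essentially the same route as the paper: the paper derives the corollary in one line from Theorem~\ref{lr-dual2} and Theorem~\ref{section5-t1}, and your unpacking uses exactly those ingredients — the decompositions of $V$ and $V_{N^l_n}$ from Theorem~\ref{lr-dual2}, the positive-lowest-weight argument of Lemma~\ref{de-pa1} (the same $\omega''_1$-argument as in \eqref{de5}) for the commutant identity $C_V(K(\f{sl}_l,n))=L_{\widehat{\f{sl}_n}}(l,0)$, and the duality pair $(K(\f{sl}_n,l),K(\f{sl}_l,n))$ in $V_{N^l_n}$ together with Theorem~\ref{section5-t1} for irreducibility and the bijectivity of $\rho$. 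You correctly isolate the injectivity of $\rho$ as the only genuinely nontrivial point, which the paper likewise delegates to the parafermion-side duality statements rather than proving directly.
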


We have the following chain of semi-conformal vertex subalgebras:
\[ V_{\sqrt{l}A_{n-1}}\subseteq L_{\widehat{\f{sl}_n}}(l,0)\subseteq L_{\widehat{\f{sl}_n}}(l_1,0)\otimes\cdots \otimes L_{\widehat{\f{sl}_n}}(l_s,0) \subseteq L_{\widehat{\f{sl}_n}}(1,0)^{\otimes l}=V_{A_{n-1}^{\times l}},\]
where $l=l_1+\cdots+l_{s}$. Note that each of the vertex operator algebras is rational. Although the following two vertex operator algebras
\[ V_{\widehat{\f{h}}}(l, 0) \subseteq L_{\widehat{\f{l}_{\underline{\ell}}}}(l,0) \] in $L_{\widehat{\f{sl}_n}}(l,0) $
are not rational,  the maximal conformal extensions
 $\bar{V}_{\widehat{\f{h}}}(l, 0) \subseteq \bar{L}_{\widehat{\f{l}_{\underline{\ell}}}}(l,0)$ in $L_{\widehat{\f{sl}_n}}(l,0)$ are rational. The following discussions of  duality pairs will use this fact.

Let $ W=L_{\widehat{\f{sl}_n}}(1,0)^{\otimes l}$ and $ V=L_{\widehat{\f{sl}_n}}(l_1,0)\otimes\cdots \otimes L_{\widehat{\f{sl}_n}}(l_s,0)$ and $ U=L_{\widehat{\f{sl}_n}}(l,0)$.  We will study   the following  pairs in a sequel paper:
\[ (V, C_W(V))\; \text{in} \; W, \quad  (U, C_V(U)) \; \text{in}\; V, \quad (U, C_W(U)) \  {\rm in} \ W.\]

\subsection{} Let $(U^1,U^2)$ and $(U'^1, U'^2)$ be two  duality pairs in $V$ with respect to an irreducible $V$-module $M$. We use $ \rho_M :\Irr_M(U^1)\rightarrow \Irr_M(U^2) $ and  $ \rho'_M :\Irr_M(U'^1)\rightarrow \Irr_M(U'^2) $ respectively to denote the correspondence. In case $ U^1\subseteq U'^1$, then these two duality pairs will have certain {\em see and saw} duality property. For reductive groups, various constructions of pairs of duality pairs are discussed in \cite{Ku} and vertex operator algebra versions of these constructions are expected to carry over as well.  
\begin{theorem}  If $ U^1\subseteq U'^1$, then $(U^1, U^2\cap U'^1)$ is a duality pair in $U'^1$ with respect to any $ M'^1\in \Irr_{M}(U'^1)$ and $ (U^2\cap U'^1, U'^2)$ is a duality pair in $U^2$ with respect to any $M^2\in \Irr_{M}(U^2)$.
\end{theorem}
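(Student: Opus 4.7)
My strategy is to align the two given duality decompositions of $M$ through the inclusions $U^1 \subseteq U'^1$ (hence $U'^2 \subseteq U^2$), and then invoke a vertex operator algebra analogue of the classical double centralizer theorem to identify the multiplicity spaces as irreducible modules over $U^2 \cap U'^1$.

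The easy halves of the commutant condition (a) are immediate: $C_{U'^1}(U^1) = U'^1 \cap C_V(U^1) = U'^1 \cap U^2$ and $C_{U^2}(U'^2) = U^2 \cap C_V(U'^2) = U^2 \cap U'^1$. The reverse inclusions $C_{U'^1}(U^2 \cap U'^1) \subseteq U^1$ and $C_{U^2}(U^2 \cap U'^1) \subseteq U'^2$ will be derived after establishing (b), since an element of a semisimple module commuting with the whole centralizer must lie in the trivial $(U^2 \cap U'^1)$-isotypic piece (in the spirit of \cite[Cor.~3.11.11]{LL}).

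Next I combine the two decompositions. Since $U^1 \subseteq U'^1$, each $N'^1 \in \Irr_M(U'^1)$ is semisimple as a $U^1$-module, and the multiplicity space $\Hom_{U^1}(N^1, N'^1)$ inherits a natural action of $U^2 \cap U'^1$ (which lies in $U'^1$ and commutes with $U^1$). Substituting $N'^1 = \bigoplus_{N^1} N^1 \otimes \Hom_{U^1}(N^1, N'^1)$ into $M = \bigoplus_{N'^1} N'^1 \otimes \rho_M(N'^1)$ and matching $N^1$-isotypic components against $M = \bigoplus_{N^1} N^1 \otimes \rho_M(N^1)$ produces, as $(U^2 \cap U'^1) \otimes U'^2$-modules,
\begin{equation}\label{keystar}
\rho_M(N^1) \;\cong\; \bigoplus_{N'^1 \in \Irr_M(U'^1)} \Hom_{U^1}(N^1, N'^1) \otimes \rho_M(N'^1).
\end{equation}
By the $(U'^1,U'^2)$-duality, the $\rho_M(N'^1)$ are pairwise non-isomorphic irreducible $U'^2$-modules, so \eqref{keystar} is precisely the $U'^2$-isotypic decomposition of the irreducible $U^2$-module $\rho_M(N^1)$.

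The heart of the proof, and what I expect to be the main obstacle, is a VOA double centralizer principle: if $W$ is an irreducible module for a vertex operator algebra $B$ with semi-conformal subalgebra $D$ and commutant $C = C_B(D)$, and $W$ is semisimple as a $D$-module, then in the $D$-isotypic decomposition $W = \bigoplus U_\alpha \otimes V_\alpha$ each $V_\alpha$ is an irreducible $C$-module and distinct $\alpha$ give non-isomorphic $V_\alpha$. I plan to establish this by a Schur-lemma argument together with the commutant structure in \cite[\S 3.11]{LL}. Applying it to \eqref{keystar} with $W = \rho_M(N^1)$, $B = U^2$, $D = U'^2$, $C = U^2 \cap U'^1$ yields the second assertion that $(U^2 \cap U'^1, U'^2)$ is a duality pair in $U^2$ with respect to $M^2 = \rho_M(N^1)$. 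Applying the same principle with $W = M'^1 = N'^1$, $B = U'^1$, $D = U^1$, $C = U^2 \cap U'^1$ yields the first assertion that $(U^1, U^2 \cap U'^1)$ is a duality pair in $U'^1$ with respect to $M'^1$. The reverse commutant inclusions noted above also follow from these decompositions, completing the argument.
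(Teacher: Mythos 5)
The combinatorial core of your argument --- substituting the $U^1$-decomposition of each $N'^1$ into the $(U'^1,U'^2)$-decomposition of $M$ and matching isotypic pieces to get $\rho_M(N^1)\cong\bigoplus_{N'^1}\Hom_{U^1}(N^1,N'^1)\otimes\rho'_M(N'^1)$ --- is exactly the computation in the paper's proof, which obtains the equivalent identity $M'^1=\bigoplus_{M^1}M^1\otimes\Hom_{U^1}(M^1,M'^1)$ by applying $\Hom_{U'^2}(\rho'_M(M'^1),-)$ to the two decompositions of $M$. But the two places where you defer the work are genuine gaps. The ``VOA double centralizer principle'' you plan to prove --- for a semi-conformal $D\subseteq B$ with $C=C_B(D)$ and an irreducible $B$-module $W$ that is $D$-semisimple, the multiplicity spaces are irreducible and pairwise non-isomorphic $C$-modules --- is false at that level of generality: take $B=V_L$ a lattice vertex operator algebra, $D=M(1)$ its Heisenberg subalgebra and $W=V_L$; then $C=C_B(D)=\C\mathbf{1}$, and in $V_L\cong\bigoplus_{\alpha\in L}M(1,\alpha)\otimes \C e^{\alpha}$ every multiplicity space is the trivial one-dimensional $C$-module, so the non-isomorphism (and $C_B(C)=D$) fails. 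A Schur-lemma argument cannot rescue this, because vertex operator algebra commutants do not obey a double-commutant theorem; what makes the conclusion true in the theorem is the full strength of the hypothesis that \emph{both} $(U^1,U^2)$ and $(U'^1,U'^2)$ are duality pairs with respect to $M$, and the paper's proof extracts the irreducibility and the bijection $\Irr_{M'^1}(U^1)\rightarrow\Irr_{M'^1}(U^2\cap U'^1)$ precisely from playing these two decompositions of $M$ against each other, never from a standalone principle applied to a single module.

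Your treatment of condition (a) is also problematic. You propose to deduce the hard inclusions $C_{U'^1}(U^2\cap U'^1)\subseteq U^1$ and $C_{U^2}(U^2\cap U'^1)\subseteq U'^2$ ``after (b)'' from the isotypic decompositions, but the decompositions you establish concern the constituents $M'^1\in\Irr_M(U'^1)$ and $\rho_M(N^1)\in\Irr_M(U^2)$, and the algebras $U'^1$ and $U^2$ themselves need not occur among these constituents when $M\neq V$; in addition, identifying the ``trivial isotypic piece'' requires knowing which irreducible $(U^2\cap U'^1)$-modules contain vacuum-like vectors, lowest-weight information that is not available in this abstract setting (in the concrete situations of Sections 3--4 the paper proves such positivity separately, e.g.\ Lemma 3.6). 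The paper instead gets (a) directly from the ambient relations $U^i=C_V(U^j)$ and $U'^i=C_V(U'^j)$: $C_{U'^1}(U^1)=U'^1\cap C_V(U^1)=U'^1\cap U^2$ and $U^1=C_V(U^2)=U'^1\cap C_V(U^2)=C_{U'^1}(U'^1\cap U^2)$, with the second pair handled symmetrically using $U'^2\subseteq U^2$. You should restructure your argument along these lines rather than routing condition (a) through the module decompositions and resting condition (b) on an unproved (and, as stated, false) general double-centralizer claim.
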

\begin{proof} By (a) of the duality pair condition, $U^1\subseteq U'^1$ implies that
$U^2\supseteq U'^2$. Thus $ U'^1\cap U^2=U'^1\cap C_{V}(U^1)=C_{U'^1}(U^1)$.
Similarly, $ U^1=C_V(U^2)=U'^1\cap C_V(U^2)=C_{U'^1}(U'^1\cap U^2)$. Thus the
condition (a) holds for the pair $ (U^1, U'^1\cap U^2)$. To check the condition (b),
we first have, for each $ M^1 \in \Irr_M(U^1)$,
\[\Hom_{U^1}(M^1, M)=\oplus _{M'^1\in \Irr_{M}(U'^1)} \Hom_{U^1}(M^1, M'^1)\otimes \rho'_{M}(M'^1) \]
as $ U^2$-modules.
Thus we get the following decomposition of $M$ as $U^1$-modules,
\[ M=\bigoplus_{M^1\in \Irr_M(U^1)}\bigoplus_{M'^1\in \Irr_M(U'^1)}  M^1\otimes \Hom_{U^1}(M^1, M'^1)\otimes \rho'_{M}(M'^1).\]
Now comparing with the decomposition of $M$ as $ U'^1$-modules
\[ M=\bigoplus_{M'^1\in \Irr_M(U'^1)}  M'^1 \otimes \rho'_{M}(M'^1)\]
and by applying the functor $ M'^1=\Hom_{U'^2}(\rho'_M(M'^1), M)$ to get
\[ M'^1=\bigoplus_{M^1\in \Irr_M(U^1)} M^1\otimes \Hom_{U^1}(M^1, M'^1)\]
which is an isomorphism of $ U'^1$-modules. Note that $M^1 \in \Irr_{M'^1}(U^1)$ if and only if $ \Hom_{U^1}(M^1, M'^1)\neq 0$. Thus we have the bijection $ \Irr_{M'^1}(U^1)\rightarrow \Irr_{M'^1}(U^2\cap U'^1)$. The other duality pair
follows from a similar argument by replacing the pairs $U^1\subseteq U'^1$ with $U'^2\subseteq U^2$.
\end{proof}

In the above proof, we notice that $U'^2\subseteq  U^2$. Hence any  $U^2$-module $ \rho_M(M^1)$ is a $U'^2$-module. Thus we have
\begin{cor} With the assumption of the theorem,   for any $ M^1 \in \Irr_M(U^1)$ and $M'^1\in \Irr_M(U'^1)$, we have an isomorphism of  $ U^2\cap U'^1$-modules
\[\Hom_{U^1}(N, N')\cong \Hom_{U'^2}(\rho'_{M}(N'), \rho_M(N)).\]
\end{cor}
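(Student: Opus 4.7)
The plan is to extract the claimed isomorphism directly from the two decompositions of $M$ that appear in the proof of the preceding theorem, without introducing any new structures. Since that proof already established $(U^1, U^2\cap U'^1)$ as a duality pair in $U'^1$ and (symmetrically) $(U^2\cap U'^1, U'^2)$ as a duality pair in $U^2$, the Hom spaces on the two sides of the claimed identity will both be identified with the multiplicity space in a common decomposition of $M$.

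First I would recall from the proof of the theorem the intermediate isomorphism
\[ M\cong \bigoplus_{M^1\in\Irr_M(U^1)}\bigoplus_{M'^1\in\Irr_M(U'^1)} M^1\otimes \Hom_{U^1}(M^1,M'^1)\otimes \rho'_M(M'^1) \]
of $U^1\otimes (U^2\cap U'^1)\otimes U'^2$-modules. Comparing this with the $(U^1\otimes U^2)$-decomposition $M=\bigoplus_{M^1} M^1\otimes \rho_M(M^1)$, and extracting the $M^1$-isotypic component (using that $M^1$ is irreducible over $U^1$, so the multiplicity space is computed as $\Hom_{U^1}(M^1,M)$), I obtain
\[ \rho_M(M^1)\cong \bigoplus_{M'^1\in\Irr_M(U'^1)} \Hom_{U^1}(M^1,M'^1)\otimes \rho'_M(M'^1) \]
as a $(U^2\cap U'^1)\otimes U'^2$-module.

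Second, since $U'^2\subseteq U^2$, I may regard the right-hand side as a $U'^2$-module. The duality pair condition for $(U'^1, U'^2)$ says that $\rho'_M$ is a bijection onto irreducibles, so the $\rho'_M(M'^1)$ are pairwise non-isomorphic irreducible $U'^2$-modules. Hence the displayed decomposition is precisely the $U'^2$-isotypic decomposition of $\rho_M(M^1)$, and applying $\Hom_{U'^2}(\rho'_M(M'^1),-)$ extracts the summand corresponding to $M'^1$, yielding the claimed isomorphism.

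The only point that requires a moment's thought is the compatibility of the $(U^2\cap U'^1)$-module structures on the two sides: on $\Hom_{U^1}(M^1,M'^1)$ it is induced from the action on $M'^1$, whereas on $\Hom_{U'^2}(\rho'_M(M'^1),\rho_M(M^1))$ it is induced from the action on $\rho_M(M^1)=\Hom_{U^1}(M^1,M)$. Both structures arise from the single action of $U^2\cap U'^1\subseteq V$ on $M$, so the isomorphism constructed via the common intermediate decomposition above is automatically $(U^2\cap U'^1)$-equivariant. I expect this bookkeeping of the module action to be the only nontrivial part; the algebra itself is a short unwinding of the theorem.
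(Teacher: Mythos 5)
Your proposal is correct and is essentially the paper's own argument: the corollary is read off from the refined decomposition $M\cong\bigoplus_{M^1,M'^1} M^1\otimes \Hom_{U^1}(M^1,M'^1)\otimes\rho'_M(M'^1)$ established in the proof of the theorem, by extracting multiplicity spaces (over $U^1$ to identify $\rho_M(M^1)$, then over $U'^2$, using that $U'^2\subseteq U^2$ and that the $\rho'_M(M'^1)$ are pairwise non-isomorphic irreducible $U'^2$-modules), with the $U^2\cap U'^1$-equivariance coming from the single action on $M$. This matches the paper, which treats the corollary as an immediate consequence of the same decompositions.
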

This isomorphism provides a reciprocity law for vertex operator algebras in the sense of \cite{howe:reciprocity} which was for reductive groups.

\subsection{} We now apply reciprocity law in the above corollary to various cases we discussed in this paper.

(1) In $ L_{\widehat{\f{sl}_l}}(n,0)$, we consider the two duality pairs $ (U'^1, U'^2)=(\bar{L}_{\widehat{\f{l}_{\underline {\ell}}}}(n,0), K(\f{sl}_l, \f{l}_{\underline{\ell}}, n))$  and 
 $(U^1,U^2)=(V_{\sqrt{n}A_{l-1}}, K(\f{sl}_l, n))$. Then  we have isomorphisms
\[ C_{\bar{L}_{\widehat{\f{l}_{\underline {\ell}}}}(n,0)}(V_{\sqrt{n}A_{l-1}})
\cong C_{K(\f{sl}_l, n)}(K(\f{sl}_l, {\f{l}_{\underline{\ell}}}, n))\] 
which is isomorphic further to $K(\f{sl}_l, n)\cap \bar{L}_{\widehat{\f{l}_{\underline {\ell}}}}(n,0) $ for any $\underline{\ell}=(l_1,\cdots, l_s)$.

(2) Similarly if we apply the corollary to the two duality pairs  $(U^1, U^2)=(V_{\sqrt{l} A_{n-1}}, V_{N^l_n}) $ and $(U'^1, U'^2)=(L_{\widehat{\f{sl}_n}}(l,0), K(\f{sl}_{l}, n))$ in $ V_{A_{n-1}^{\times l}}$ with respective to $ M= V_{A_{n-1}^{\times l}}$. We get
\[  K(\f{sl}_n, l)= C_{L_{\widehat{\f{sl}_n}}(l,0)}(V_{\sqrt{l}A_{n-1}})\cong C_{V_{N^l_n}}(K(\f{sl}_l, n))\]
which  follows from the proof of Theorem 3.10.

(3) Now let us set $(U^1, U^2)=(L_{\widehat{\f{sl}_n}}(l,0), K(\f{sl}_l, n))$  and $(U'^1, U'^2)=(L_{\widehat{\f{sl}_n}}(l_1,0)\otimes \cdots \otimes L_{\widehat{\f{sl}_n}}(l_s,0), K(\f{sl}_{l_1}, n)\otimes \cdots\otimes K(\f{sl}_{l_s}, n))$ in $ V_{A_{n-1}^{\times l}}$ with respective to $ M= V_{A_{n-1}^{\times l}}$. Here we use the convention that $ K(\f{sl}_1, n)=\C\mbf{1}$. Then by Theorem \ref{lem4.2} we get
\[K(\f{sl}_l, \f{l}_{\underline{\ell}}, n)\cong C_{L_{\widehat{\f{sl}_n}}(l_1,0)\otimes \cdots \otimes L_{\widehat{\f{sl}_n}}(l_s,0)}( L_{\widehat{\f{sl}}_n}(l,0))\cong C_{K(\f{sl}_l, n)}(K(\f{sl}_{l_1}, n)\otimes \cdots\otimes K(\f{sl}_{l_s}, n)). \]

\section{Applications and Examples}
In this section we summarize known results on the question of rationality of the coset construction and parafermions using  Theorem~\ref{thm:1.1}.

The structure of parafermion vertex operator algebras has been studied extensively recently ( see \cite{ALY, DLWY, DLY2,DW1, DW2} etc.). Most of the results concentrated on $K(\frak{sl}_2, l)$. The next result gives rationality of parafermion vertex operator algebras  for all $\frak{sl}_n$ in the level 2 case.

\begin{theorem} For any $n\geq 2$, the parafermion $ K(\frak{sl}_n, 2)$ is always rational and it has $2^{n-2}(n+1)$ irreducible representations.
\end{theorem}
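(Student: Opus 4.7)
The strategy is to reduce to the known $\mathfrak{sl}_2$-case via the level-rank duality of Theorem~\ref{thm:1.1}. First, I would apply Theorem~\ref{thm:1.1} with the roles of $n$ and $l$ interchanged---namely, specialize the theorem to $\widehat{\mathfrak{sl}_2}$ (with the pair $(n,l)$ in the theorem taken to be $(2,n)$)---to obtain the isomorphism of vertex operator algebras
\[
K(\mathfrak{sl}_n, 2) \;\cong\; C_{L_{\widehat{\mathfrak{sl}_2}}(1,0)^{\otimes n}}\bigl(L_{\widehat{\mathfrak{sl}_2}}(n,0)\bigr).
\]
In other words, the parafermion $K(\mathfrak{sl}_n,2)$ is identified with the coset of $L_{\widehat{\mathfrak{sl}_2}}(n,0)$ inside the $n$-fold tensor power of the basic representation of $\widehat{\mathfrak{sl}_2}$.

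Next, I would invoke the main result of \cite{JL}, where it is proved that $C_{L_{\widehat{\mathfrak{sl}_2}}(1,0)^{\otimes l}}(L_{\widehat{\mathfrak{sl}_2}}(l,0))$ is rational for every $l\ge 1$. Applied with $l=n$, this immediately yields rationality of $K(\mathfrak{sl}_n,2)$ and, via Theorem~\ref{tt2.1}, ensures that it has only finitely many inequivalent irreducible (ordinary) modules.

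For the enumeration, I would transport the classification of irreducibles from \cite{JL}. In that paper the irreducible modules of $C_{L_{\widehat{\mathfrak{sl}_2}}(1,0)^{\otimes n}}(L_{\widehat{\mathfrak{sl}_2}}(n,0))$ are explicitly described: not all of them appear in the decomposition \eqref{eq:decomposition} of $L_{\widehat{\mathfrak{sl}_2}}(1,0)^{\otimes n}$ itself, but the remaining ones appear in the decompositions of the tensor products $L_{\widehat{\mathfrak{sl}_2}}(1,\bar{\Lambda}_1)\otimes\cdots\otimes L_{\widehat{\mathfrak{sl}_2}}(1,\bar{\Lambda}_n)$, as the level-$1$ weights $\bar{\Lambda}_i$ range over the two integrable possibilities for $\widehat{\mathfrak{sl}_2}$. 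This produces $2^n$ tensor products and $n+1$ possible level-$n$ integrable dominant weights $\bar{\Lambda}$; after the standard simple-current identifications, governed by the $(\mathbb{Z}/2\mathbb{Z})^2$-symmetry of the level-$1$ fusion rules for $\widehat{\mathfrak{sl}_2}$, the number of pairwise non-isomorphic coset blocks becomes $(n+1)\cdot 2^n / 4 = 2^{n-2}(n+1)$, as claimed.

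The main obstacle is the last step. Rationality is an essentially formal consequence of the identification in Theorem~\ref{thm:1.1} together with the cited result of \cite{JL}; what requires real work is: (a) determining, via the fusion rules of $V_{A_1}$ and of $L_{\widehat{\mathfrak{sl}_2}}(n,0)$, exactly which pairs $(\bar{\Lambda};\bar{\Lambda}_1,\ldots,\bar{\Lambda}_n)$ give rise to nonzero and pairwise non-isomorphic hom-spaces $\Hom_{\widehat{\mathfrak{sl}_2}}\bigl(L_{\widehat{\mathfrak{sl}_2}}(n,\bar{\Lambda}),\; \bigotimes_{i=1}^{n}L_{\widehat{\mathfrak{sl}_2}}(1,\bar{\Lambda}_i)\bigr)$, and (b) collapsing the resulting combinatorial expression to the closed form $2^{n-2}(n+1)$. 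This bookkeeping is precisely the combinatorial heart of the module classification in \cite{JL}, which must be quoted (or recovered) in order to pin down the exact count rather than merely an upper bound.
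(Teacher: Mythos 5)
Your proposal is correct and follows essentially the same route as the paper: the proof there simply applies Theorem~\ref{thm:1.1} with the pair $(2,n)$ to identify $K(\frak{sl}_n,2)$ with $C_{L_{\widehat{\frak{sl}_2}}(1,0)^{\otimes n}}(L_{\widehat{\frak{sl}_2}}(n,0))$ and then quotes \cite[Thm. 4.16, Thm. 5.5]{JL} for both rationality and the count $2^{n-2}(n+1)$. Your added heuristic for the enumeration is fine as motivation, but, as you yourself note, the precise classification is taken from \cite{JL}, exactly as in the paper.
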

\begin{proof} By Theorem~\ref{thm:1.1}, $K(\frak{sl}_n, 2)\cong C_{L_{\widehat{\frak{sl}_2}}(1,0)^{\otimes n}}(L_{\widehat{\frak{sl}_2}}(n,0))$.  Now the theorem follows from \cite[Thm. 4.16, Thm. 5.5]{JL}.
\end{proof}

Conversely the known results of parafermions  can be transferred to coset constructions.
\begin{theorem} (1) $C_{L_{\widehat{\frak{sl}_n}}(1,0)^{\otimes l}}(L_{\widehat{\frak{sl}_n}}(l,0))$ is always $C_2$-cofinite.

	(2) The Zhu algebra $A(C_{L_{\widehat{\frak{sl}_n}}(1,0)^{\otimes 2}}(L_{\widehat{\frak{sl}_n}}(2,0)) )$ is semisimple, commutative, of dimension $n(n+1)/2$. In particular,  $ C_{L_{\widehat{\frak{sl}_n}}(1,0)^{\otimes 2}}(L_{\widehat{\frak{sl}_n}}(2,0))$ has exactly $n(n+1)/2$ non-isomorphic irreducible representations.
	
	(3) $C_{L_{\widehat{\frak{sl}_n}}(1,0)^{\otimes 2}}(L_{\widehat{\frak{sl}_n}}(2,0)) $ is rational for $n\leq 6$.

\end{theorem}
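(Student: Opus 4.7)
The plan is to use Theorem~\ref{thm:1.1} as a translation device: it identifies
\[
C := C_{L_{\widehat{\frak{sl}_n}}(1,0)^{\otimes l}}(L_{\widehat{\frak{sl}_n}}(l,0)) \cong K(\frak{sl}_l, n),
\]
after which all three parts become statements about the parafermion vertex operator algebra $K(\frak{sl}_l, n)$, for which a substantial body of literature already exists (\cite{ALY}, \cite{DLWY}, \cite{DLY2}, \cite{DW1}-\cite{DW3}). In other words, this theorem is essentially a ``packaging'' result: once Theorem~\ref{thm:1.1} is available, nothing deep remains to be proved here.

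For (1), I would argue $C_2$-cofiniteness of $K(\frak{sl}_l, n)$ using the alternate realization given in equation~(1.2) of this paper: $K(\frak{sl}_l, n)$ is the commutant of the rational, $C_2$-cofinite lattice vertex operator algebra $V_{\sqrt{n}A_{l-1}}$ inside the rational, $C_2$-cofinite affine vertex operator algebra $L_{\widehat{\frak{sl}_l}}(n,0)$. General coset theorems for $C_2$-cofiniteness (in the spirit of Miyamoto) then yield the conclusion; for this particular family of parafermions, explicit proofs already appear in the Dong--Wang series cited above. Pulling the result back through the isomorphism of Theorem~\ref{thm:1.1} gives (1).

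For (2), specialize Theorem~\ref{thm:1.1} to $l = 2$ to obtain $C \cong K(\frak{sl}_2, n)$, the classical level-$n$ parafermion of $\widehat{\frak{sl}_2}$. The Zhu algebra $A(K(\frak{sl}_2, n))$ has been analyzed in \cite{DW1}-\cite{DW3}, where it is shown to be commutative, semisimple, and of dimension $n(n+1)/2$. Zhu's correspondence between irreducible admissible modules and simple modules of the Zhu algebra then yields exactly $n(n+1)/2$ inequivalent irreducible $C$-modules. One small bookkeeping step is to check that the Zhu-algebra isomorphism $A(C)\cong A(K(\frak{sl}_2,n))$ really is induced by the vertex-algebra isomorphism of Theorem~\ref{thm:1.1}, but this is formal from the functoriality of the $A(V)$-construction.

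For (3), rationality of $K(\frak{sl}_2, n)$ for $n\leq 6$ has been established in the cited parafermion literature (see \cite{ALY}, \cite{DW1}-\cite{DW3}), and Theorem~\ref{thm:1.1} transfers this to $C$. The substantive obstacle is external to the theorem: rationality of $K(\frak{sl}_2, n)$ for general $n$ is still open at this stage, and the identification in Theorem~\ref{thm:1.1} precisely shows that a positive answer would immediately resolve the rationality question for the coset $C_{L_{\widehat{\frak{sl}_n}}(1,0)^{\otimes 2}}(L_{\widehat{\frak{sl}_n}}(2,0))$ uniformly in $n$. Thus the only real barrier to strengthening (3) to all $n$ lies in the parafermion side, not in the coset side.
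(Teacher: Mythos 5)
Your proposal matches the paper's proof in approach: the paper likewise treats this theorem as a direct transfer of known parafermion results through the identification of Theorem~\ref{thm:1.1} (with $l=2$ giving $K(\frak{sl}_2,n)$ for parts (2) and (3)), and its entire argument consists of citing \cite[Thm.~10.5]{ALY} for (1), \cite[Thm.~8.2]{ALY} together with \cite{DLY2} for (2), and \cite{DLY2} for (3). The only divergence is bibliographic — you lean on the Dong--Wang papers and a general coset principle where the paper invokes \cite{ALY} and \cite{DLY2} — which does not change the substance.
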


\begin{proof} (1) follows from \cite[Thm. 10.5]{ALY}.  (2) follows from \cite[Thm. 8.2]{ALY} and \cite{DLY2} and (3) follows from \cite{DLY2}.
\end{proof}

\end{document}